\def\paragraph{\@startsection{paragraph}{4}%
  \z@\z@{-\fontdimen2\font}%
  {\normalfont\bfseries}}
\numberwithin{equation}{section}
\newtheorem{thm}{Theorem}[section]
\newtheorem{prop}[thm]{Proposition}
\newtheorem{propdefn}[thm]{Proposition--Definition}
\newtheorem{lem}[thm]{Lemma}
\newtheorem{cor}[thm]{Corollary}
\theoremstyle{definition}
\newtheorem{defn}[thm]{Definition}
\newtheorem{eg}[thm]{Example}
\newtheorem{rmk}[thm]{Remark}
\newcommand{\Aa}{\mathbb{A}}
\newcommand{\CC}{\mathbb{C}}
\newcommand{\FF}{\mathbb{F}}
\newcommand{\PP}{\mathbb{P}}
\newcommand{\QQ}{\mathbb{Q}}
\newcommand{\TT}{\mathbb{T}}
\newcommand{\ZZ}{\mathbb{Z}}
\newcommand{\pA}{\mathsf{A}}
\newcommand{\pC}{\mathsf{C}}
\newcommand{\pG}{\mathsf{G}}
\newcommand{\pT}{\mathsf{T}}
\newcommand{\sA}{\mathcal{A}}
\newcommand{\sB}{\mathcal{B}}
\newcommand{\sO}{\mathcal{O}}
\newcommand{\sX}{\mathcal{X}}
\DeclareMathOperator{\Proj}{Proj}
\DeclareMathOperator{\Spec}{Spec}
\DeclareMathOperator{\Hom}{Hom}
\DeclareMathOperator{\Gr}{Gr}
\DeclareMathOperator{\OGr}{OGr}
\DeclareMathOperator{\Cl}{Cl}
\DeclareMathOperator{\Pf}{Pf}
\DeclareMathOperator{\exc}{exc}
\DeclareMathOperator{\NE}{NE}
\DeclareMathOperator{\Jac}{Jac}
\DeclareMathOperator{\Mor}{Mor}
\DeclareMathOperator{\BDih}{BDih}
\DeclareMathOperator{\Dih}{Dih}
\DeclareMathOperator{\CI}{CI}
\DeclareMathOperator{\codim}{codim}
\DeclareMathOperator{\sing}{sing}
\newcommand{\hidecomments}{0} % show comments 
\newcommand{\TomComment}[1]{ 
 \ifnum\hidecomments=1 
 \else
   \textcolor{red}{\em #1}
 \fi}
\newcommand{\StephenComment}[1]{
 \ifnum\hidecomments=1 
 \else
   \textcolor{blue}{\em #1}
 \fi}
\begin{document}

\title{Constructing Fano 3-folds from \\ cluster varieties of rank 2}
\author{Stephen Coughlan \& Tom Ducat}
\date{}                      % Activate to display a given date or no date

\maketitle

\begin{center}
\emph{To Miles Reid on his 70th birthday.}
\end{center}

\begin{abstract}
Cluster algebras give rise to a class of Gorenstein rings which enjoy a large amount of symmetry. Concentrating on the rank 2 cases, we show how cluster varieties can be used to construct many interesting projective algebraic varieties. Our main application is then to construct hundreds of families of Fano 3-folds in codimensions 4 and 5. In particular, for Fano 3-folds in codimension 4 we construct at least one family for 187 of the 206 possible Hilbert polynomials contained in the Graded Ring Database.
\end{abstract}

\setcounter{tocdepth}{1}
\tableofcontents

\setlength{\parskip}{2pt}

\section{Introduction}

Cluster algebras were originally introduced in a series of papers by Fomin \& Zelevinsky, starting with \cite{fz}, and have since been found to appear in many diverse branches of mathematics. They enjoy many remarkable properties; two of the most important of which are the \emph{Laurent phenomenon} (i.e.\ that any cluster variable can be expanded as a Laurent polynomial in some distinguished subset of the other cluster variables) and, for cluster algebras of \emph{finite type}, a classification parallel to the Cartan--Killing classification of Lie groups. In particular, a cluster algebra of finite type is generated by a finite number of cluster variables. 

In the language of the wider cluster algebra literature, in this paper we use the term `cluster algebra' to mean a \emph{generalised} cluster algebra $\sA$ with \emph{universal geometric coefficients}, and `cluster variety' to mean the affine variety $X=\Spec\sA$. However as algebraic geometers we like to take a more geometric approach to defining cluster varieties, in terms of the families of log Calabi--Yau surfaces constructed by Gross, Hacking \& Keel \cite{ghk}. Taking this approach provides a much clearer way to generalise our methods.
%\footnote{In the sense of \cite{CS}.}
%\footnote{In the sense of \cite{Reading}.}

\subsection{Motivation}

Our primary motivation comes from classification problems in low-dimensional algebraic geometry. In particular, we have chosen to concentrate on the classification of Fano 3-folds (a.k.a.\ $\QQ$-Fano 3-folds with at worst terminal singularities), but the methods of this paper would be just as applicable to constructing other types of projective algebraic varieties, including Calabi--Yau 3-folds, surfaces and 3-folds of general type. Hyperplane sections of our Fano 3-folds are either K3 surfaces or del Pezzo surfaces, with cyclic quotient singularities.

\paragraph{Gorenstein formats.}
For a formal definition of Gorenstein formats and key varieties, we refer to \S\ref{section!cluster-formats} or \cite{bkz}.
Informally, a Gorenstein format is a succinct representation of the generators, relations and syzygies of a Gorenstein ring $R$. A key variety $V$ is the generic case of a format, that is, $V=\Spec R$. We construct $\phi^{-1}(V)\subset\Aa^n_{z_1,\ldots, z_n}$ by substituting the generators $x_1,\dots,x_m$ of $R$ with polynomials $x_i=\phi_i(z_1,\dots,z_n)$, $i=1,\dots,m$. If $\phi$ preserves the format of $\phi^{-1}(V)$, then this is called a regular pullback of $V$. If $R$ is graded and we choose $\phi$ appropriately, then we can divide $\phi^{-1}(V)$ by the $\CC^*$-action to get (weighted) projective varieties. In the best cases, $V$ has a large torus action so that there are several choices of grading available.

For example, the origin $V:=V(x_1,\dots,x_m)$ in $\Aa^m$, together with the Koszul resolution of its defining ideal, is a format. Regular pullbacks of $V$ give hypersurfaces ($m=1$) or complete intersections of codimension $m\ge2$.
Another classic example is the affine cone over the Pl\"ucker embedding of the Grassmannian $\Gr(2,5)$ (cf.~\cite{CR}). This is an instance of the Buchsbaum--Eisenbud theorem for projectively Gorenstein varieties in codimension 3. Moreover, we note that $\Gr(2,5)$ also appears as the simplest nontrivial cluster variety, associated to the $\pA_2$ root system. Brown, Kasprzyk \& Zhu \cite{bkz} make a detailed analysis of $\Gr(2,5)$ format (or, in our notation, $\pA_2$ format) for constructing Calabi--Yau and canonical 3-folds.

Other symmetric spaces, such as the orthogonal Grassmannian $\OGr(5,10)$, were used by Mukai to construct canonical curves, K3 surfaces and smooth Fano 3-folds of genus $6\leq g\leq 10,12$ amongst other things. In particular, it seems that weighted $\OGr(5,10)$ format does not yield any other constructions of Fano 3-folds \cite{CR}, but is moderately successful for canonical 3-folds~\cite{bkz}.
% (cf.~\cite{Muk})

\paragraph{Fano 3-folds.}
Previous efforts to construct Fano 3-folds in codimension~$\geq4$ include Tom \& Jerry \cite{bkr}, \cite{bs}, \cite{bs2}, \cite{prokreid} and \cite{bkq}. % plus Rosemary, Kuzma?
There are also non-existence results for Fanos of high Fano index due to Prokhorov \cite{P13}, \cite{P16}. % Kuzma works at Yandex 
The approach taken in most of these works is to construct Fano 3-folds by various types of \emph{unprojection}, i.e.\ by starting at the midpoint of a Sarkisov link and working backwards, or something similar. Recently Taylor \cite{rosemary} has developed new types of unprojection to construct many of the codimension 4 Fano 3-fold candidates. 

Despite the geometrically appealing nature of these constructions, 
unfortunately it is difficult to construct birationally rigid varieties this way 
since the corresponding Sarkisov link gives a nontrivial birational map which 
must necessarily return to the variety you started with.
%Instead we work with our cluster formats directly, giving a quick and cheap way to write down Fano 3-folds without discovering any of their birational geometry. 
Our cluster formats give uniform descriptions for special subfamilies of the 
Hilbert scheme of Fano 3-folds, with no predisposition to the birational 
geometry. One advantage of this approach is that we construct some examples 
which are expected\footnote{Since the first version of this article appeared, Okada has proven 
birational rigidity in the expected cases \cite{Okada}.} to be birationally rigid 
(see~\S\ref{section!no-projections}).

\paragraph{Similar constructions.}
The $\pC_2$ and $\pG_2$ cluster varieties appearing in this paper have been used in the literature before as key varieties to construct several interesting algebraic varieties. Indeed, $\pC_2$ format appears in the construction of Godeaux surfaces by Reid~\cite{God3} (see also \S\ref{note!Reids-Godeaux}) and a version of $\pG_2$ format appears in the explicit construction of 3-fold flips (in the guise of one of Brown \& Reid's diptych varieties \cite[\S5.2]{dip2}) and 3-fold divisorial contractions \cite[Example~7.2]{phd}.

\subsection{Rank 2 cluster formats}
There are four rank 2 cluster varieties of finite type corresponding to the four rank 2 root systems of finite type: $\pA_1\times\pA_1$, $\pA_2$, $\pC_2$ and $\pG_2$. In each case the cluster algebra has a distinguished set of generators, called cluster variables, which can be put into correspondence with the almost positive roots of the corresponding root system, as shown in Figure~\ref{RootSystemsFig}.\footnote{We fix this as the notation we will use later on, where $\theta_i$ are attached to short roots and $\theta_{ij}$ are attached to long roots.} %We denote the coefficients associated to the corresponding coroots as $A_i$ and $A_{ij}$. 
Given two adjacent cluster variables, $\theta_1$ and $\theta_{2}$ say, any other cluster variable $\theta'$ can be written as a Laurent polynomial $\theta' = \tfrac{F(\theta_1,\theta_2)}{\theta_1^{\alpha_1}\theta_2^{\alpha_2}}$ where $\alpha_1 r_1 + \alpha_2 r_2$ is a positive root in the corresponding root system and $r_1,r_2$ are a basis of simple roots.
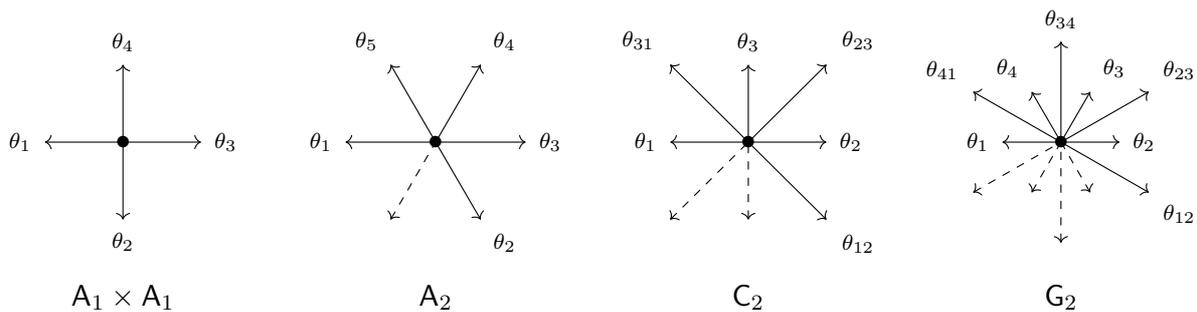
\begin{figure}[h]
\begin{center}
\resizebox{\textwidth}{!}{
\begin{tikzpicture}[label distance=-1mm]
    % A1 x A1
    \node at (1,-2) {$\pA_1\times\pA_1$};
    \draw[<->] (0,0)--(2,0);
    \draw[<->] (1,1)--(1,-1);
    \node at (1,0) {$\bullet$};
    \node at (1,-1) [label={270:\scriptsize $\theta_2$}]{};
    \node at (2,0) [label={0:\scriptsize $\theta_3$}]{};
    \node at (1,1) [label={90:\scriptsize $\theta_4$}]{};
    \node at (0,0) [label={180:\scriptsize $\theta_1$}]{};
    % A2   
    \node at (5,-2) {$\pA_2$};
    \draw[<->] (5 - 2*0.577,0)--(5 + 2*0.577,0);
    \draw[<->] (5 - 0.577,1)--(5 + 0.577,-1);
    \draw[->] (5,0)--(5 + 0.577,1);
    \draw[<-,dashed] (5 - 0.577,-1)--(5,0);
    \node at (5,0) {$\bullet$};
    \node at (5 + 0.577,-1) [label={300:\scriptsize $\theta_2$}]{};
    \node at (5 + 2*0.577,0) [label={0:\scriptsize $\theta_3$}]{};
    \node at (5 + 0.577,1) [label={60:\scriptsize $\theta_4$}]{};
    \node at (5 - 0.577,1) [label={120:\scriptsize $\theta_5$}]{};
    \node at (5 - 2*0.577,0) [label={180:\scriptsize $\theta_1$}]{};
    % C2
    \node at (9,-2) {$\pC_2$};
    \draw[<->] (8,0)--(10,0);
    \draw[<-,dashed] (9,-1)--(9,0);
    \draw[->] (9,0)--(9,1);
    \draw[<->] (8,1)--(10,-1);
    \draw[<-,dashed] (8,-1)--(9,0);
    \draw[->] (9,0)--(10,1);
    \node at (9,0) {$\bullet$};
    \node at (10,-1) [label={315:\scriptsize $\theta_{12}$}]{};
    \node at (10,0) [label={0:\scriptsize $\theta_2$}]{};
    \node at (10,1) [label={45:\scriptsize $\theta_{23}$}]{};
    \node at (9,1) [label={90:\scriptsize $\theta_3$}]{};
    \node at (8,1) [label={135:\scriptsize $\theta_{31}$}]{};
    \node at (8,0) [label={180:\scriptsize $\theta_1$}]{};
    % G2
    \node at (13,-2) {$\pG_2$};
    \draw[<->] (13-0.75,0)--(13+0.75,0);
    \draw[<->] (13-0.75*1.5,0.75*0.866)--(13+0.75*1.5,-0.75*0.866);
    \draw[->] (13,0)--(13+0.75*1.5,0.75*0.866);
    \draw[->] (13,0)--(13+0.75*0.5,0.75*0.866);
    \draw[->] (13,0)--(13,1.5*0.866);
    \draw[->] (13,0)--(13-0.75*0.5,0.75*0.866);
    \draw[<-,dashed] (13-0.75*1.5,-0.75*0.866)--(13,0);
    \draw[<-,dashed] (13-0.75*0.5,-0.75*0.866)--(13,0);
    \draw[<-,dashed] (13,-1.5*0.866)--(13,0);
    \draw[<-,dashed] (13+0.75*0.5,-0.75*0.866)--(13,0);
    \node at (13,0) {$\bullet$};
    \node at (13+0.75*1.5,-0.75*0.866) [label={330:\scriptsize $\theta_{12}$}]{};
    \node at (13+0.75,0) [label={0:\scriptsize $\theta_2$}]{};
    \node at (13+0.75*1.5,0.75*0.866) [label={30:\scriptsize $\theta_{23}$}]{};
    \node at (13+0.75*0.5,0.75*0.866) [label={60:\scriptsize $\theta_3$}]{};
    \node at (13,1.5*0.866) [label={90:\scriptsize $\theta_{34}$}]{};
    \node at (13-0.75*0.5,0.75*0.866) [label={120:\scriptsize $\theta_4$}]{};
    \node at (13-0.75*1.5,0.75*0.866) [label={150:\scriptsize $\theta_{41}$}]{};
    \node at (13-0.75,0) [label={180:\scriptsize $\theta_1$}]{};
\end{tikzpicture}}
\caption{The almost positive roots in the root systems of rank 2.}
\label{RootSystemsFig}
\end{center}
\end{figure}

Given three consecutive cluster variables $\theta_{i-1},\theta_i,\theta_{i+1}$ corresponding to roots $r_{i-1},r_i,r_{i+1}$, say, the \emph{tag} at $\theta_i$ is the integer $d_i$ such that $r_{i-1}+r_{i+1}=d_ir_i$. As seen in equations \eqref{eqns!C2-format} and \eqref{eqns!G2-format} below, this tag records the degree of the exchange relation, i.e.\ $\theta_{i-1}\theta_{i+1}=f_i(\theta_i)$ where $f_i$ is a polynomial of degree $d_i$ over an appropriate coefficient ring. 

The simplest rank 2 cluster variety, $\pA_1\times \pA_1$ format, is a generic complete intersection of codimension 2. Moreover, as already mentioned, $\pA_2$ format coincides with $\Gr(2,5)$ format (cf.\ \S\ref{section!A2-format}). In this paper we concentrate on the $\pC_2$ case, which is a Gorenstein format of codimension 4, and the $\pG_2$ case, which is a Gorenstein format of codimension 6. Very concretely, the corresponding cluster varieties are the affine varieties given by the explicit equations appearing below. We will explain one way to derive these equations in \S\ref{section!C2-format} and \S\ref{section!G2-format}, but, for the applications we have in mind, we will essentially use them as black boxes with the nice properties described in \S\ref{subsection!basic-properties}.

\subsubsection{$\pC_2$ format} \label{subsubsection!C2-format}
The cluster variety $X_{\pC_2}=\Spec{\sA_{\pC_2}}\subset \Aa^{13}$ is an affine Gorenstein $9$-fold of codimension 4, where $\sA_{\pC_2}$ is a $\ZZ^6$-graded ring with 13 generators, 9 relations and 16 syzygies. The 13 generators are given by six \emph{cluster variables} $\theta_1$, $\theta_{12}$, $\theta_2$, $\theta_{23}$, $\theta_3$, $\theta_{31}$, six \emph{coefficients} $A_1$, $A_{12}$, $A_2$, $A_{23}$, $A_3$, $A_{31}$ and one \emph{parameter} $\lambda$. The 9 relations are: 
\begin{equation} \label{eqns!C2-format}
\begin{aligned}
\theta_i\theta_j &= A_{ij}\theta_{ij} + A_{jk}A_kA_{ki}  &&&(\times 3) \\
\theta_{ki}\theta_{ij} &= A_i\theta_i^2 + \lambda A_{jk}\theta_i + A_jA_{jk}^2A_k  &&&(\times 3) \\
\theta_i\theta_{jk} &= A_{ij}A_j\theta_j + \lambda A_{ki}A_{ij} + A_kA_{ki}\theta_k  &&&(\times 3)
\end{aligned}
\end{equation}
where $(i,j,k)$ are taken to vary over all $\Dih_6$-permutations of $(1,2,3)$. 

\subsubsection{$\pG_2$ format} \label{subsubsection!G2-format}
The cluster variety $X_{\pG_2}=\Spec{\sA_{\pG_2}}\subset \Aa^{18}$ is an affine Gorenstein $12$-fold of codimension 6, where $\sA_{\pG_2}$ is a $\ZZ^8$-graded ring with 18 generators, 20 relations and 64 syzygies. The 18 generators are given by eight \emph{cluster variables} $\theta_1$, $\theta_{12}$, $\theta_2$, $\theta_{23}$, $\theta_3$, $\theta_{34}$, $\theta_4$, $\theta_{41}$, eight \emph{coefficients} $A_1$, $A_{12}$, $A_2$, $A_{23}$, $A_3$, $A_{34}$, $A_4$, $A_{41}$ and two \emph{parameters} $\lambda_{13}$, $\lambda_{24}$. The 20 relations are: 
\begin{equation} \label{eqns!G2-format}
\begin{aligned}
\theta_i\theta_j &= A_{ij}\theta_{ij} + A_{jk}A_kA_{kl}^2A_lA_{li} &&&(\times 4) \\
\theta_{ij}\theta_{jk} &= A_j\theta_j^3 + \lambda_{jl}A_{kl}A_{li}\theta_j^2 + \lambda_{ik}A_{kl}^2A_lA_{li}^2\theta_j + A_kA_{kl}^3A_l^2A_{li}^3A_i &&&(\times 4) \\
\theta_i\theta_{jk} &= A_{ij}A_j\theta_j^2 + \lambda_{jl}A_{kl}A_{li}A_{ij}\theta_j + \lambda_{ik}A_{kl}^2A_lA_{li}^2A_{ij} + A_kA_{kl}^2A_lA_{li}\theta_k &&&(\times 8) \\
\theta_i\theta_k &= A_{ij}A_jA_{jk}\theta_j + \lambda_{jl}A_{ij}A_{jk}A_{kl}A_{li} + A_{kl}A_lA_{li}\theta_l &&&(\times 2) \\
\theta_{ij}\theta_{kl} &= A_jA_{jk}^2A_k\theta_j\theta_k + A_{li}A_{jk}(\lambda_{ik}\theta_i\theta_k + \lambda_{jl}\theta_j\theta_l) + A_lA_{li}^2A_i\theta_l\theta_i + \cdots\\
 & \cdots + A_{ij}A_{jk}^2A_{kl}A_{li}^2(A_iA_jA_kA_l - \lambda_{ik}\lambda_{jl})  &&&(\times 2)
\end{aligned}
\end{equation}
where $(i,j,k,l)$ are taken to vary over all $\Dih_8$-permutations of $(1,2,3,4)$.

\subsubsection{Relation to Gross, Hacking \& Keel's construction}

Given a positive Looijenga pair $(Y,D)$ (i.e.\ a rational surface $Y$ and an ample anticanonical cycle $D\in|{-K_Y}|$), Gross, Hacking \& Keel \cite{ghk} define a family of mirror surfaces $\sX$ fibred over a toric base variety $B=\Spec\CC[\NE(Y)]$. In this case, the family $\sX/B$ is a relatively Gorenstein affine scheme with nice properties, including a torus grading $\TT^k\curvearrowright\sX$. However we are interested in working with (absolutely) Gorenstein varieties, so instead we consider a slightly different family. We first restrict $\sX$ to $\sX\vert_{\TT^n}$, over the dense torus orbit $\TT^n\subset B$, and then extend this to an affine Gorenstein variety $X/\Aa^n$, corresponding to the closure of $\TT^n\subset\Aa^n$ for some good choice of coordinates on $\TT^n$. We take this $X$ as our rank 2 cluster variety. In particular the theta functions introduced by \cite{ghk} play the role of the cluster variables. Our \emph{coefficients} $A_i$ correspond to coefficients (or frozen variables) in the language of cluster algebras. Our \emph{parameters}, $\lambda$ or $\lambda_{ij}$, do not appear in the original cluster algebra story, however we see them to be unified with the other coefficients by taking this approach.

\subsubsection{Unprojection structure}
These cluster varieties come with a natural Type I Gorenstein projection structure. The champion $X_{\pG_2}$ has a projection to a complete intersection in codimension 2, given by eliminating the four tag 1 cluster variables $\theta_{12}$, $\theta_{23}$, $\theta_{34}$ and $\theta_{41}$. We get a projection cascade (part of which is shown in Figure~\ref{figure!g2-cascade}) in which we see all of the other rank 2 cluster varieties, albeit not in their most natural presentation.
\begin{figure}[h]
\begin{center}
\resizebox{\textwidth}{!}{
\begin{tikzpicture}
  \draw[->] (1.5,0) -- node[above,yshift=0.1cm] {$\widehat{\theta}_{12}$} (2.5,0);
  \draw[->] (5.5,0.8) -- node[above,xshift=-0.2cm,yshift=0.1cm] {$\widehat{\theta}_{34}$} (6.5,1.2);
  \draw[->] (5.5,-0.8) -- node[below,xshift=-0.2cm,yshift=-0.1cm] {$\widehat{\theta}_{23}$} (6.5,-1.2);
  \draw[->] (9.5,1.2) -- node[above,xshift=0.2cm,yshift=0.1cm] {$\widehat{\theta}_{23}$} (10.5,0.8);
  \draw[->] (9.5,-1.2) -- node[below,xshift=0.2cm,yshift=-0.1cm] {$\widehat{\theta}_{34}$} (10.5,-0.8);
  \draw[->] (13.5,0) -- node[above,yshift=0.1cm] {$\widehat{\theta}_{41}$} (14.5,0);

  \draw (1,0) -- (0.707,0.707) -- (0,1) -- (-0.707,0.707) -- (-1,0) -- (-0.707,-0.707) -- (0,-1) -- (0.707,-0.707) -- cycle;
  \node[anchor=180] at (1,0) {\scriptsize $1$};
  \node[anchor=-135] at (0.707,0.707) {\scriptsize $3$};
  \node[anchor=-90] at (0,1) {\scriptsize $1$};
  \node[anchor=-45] at (-0.707,0.707) {\scriptsize $3$};
  \node[anchor=0] at (-1,0) {\scriptsize $1$};
  \node[anchor=45] at (-0.707,-0.707) {\scriptsize $3$};
  \node[anchor=90] at (0,-1) {\scriptsize $1$};
  \node[anchor=135] at (0.707,-0.707) {\scriptsize $3$};
  
  \draw[color=gray!50,fill=gray!50] (4-0.707,0.707) -- (3,0) -- (4-0.707,-0.707) -- cycle;
  \draw (5,0) -- (4.707,0.707) -- (4,1) -- (4-0.707,0.707) -- (4-0.707,-0.707) -- (4,-1) -- (4+0.707,-0.707) -- cycle;
  \node[anchor=180] at (5,0) {\scriptsize $1$};
  \node[anchor=-135] at (4.707,0.707) {\scriptsize $3$};
  \node[anchor=-90] at (4,1) {\scriptsize $1$};
  \node[anchor=-45] at (4-0.707,0.707) {\scriptsize $2$};
  \node[anchor=45] at (4-0.707,-0.707) {\scriptsize $2$};
  \node[anchor=90] at (4,-1) {\scriptsize $1$};
  \node[anchor=135] at (4+0.707,-0.707) {\scriptsize $3$};
  
  \draw[color=gray!50,fill=gray!50] 
     (8-0.707,2.707-0.5) -- (7,2-0.5) -- (8-0.707,2-0.707-0.5) -- cycle
     (8+0.707,2-0.707-0.5) -- (9,2-0.5) -- (8.707,2.707-0.5) -- cycle;
  \draw (8.707,2.707-0.5) -- (8,3-0.5) -- (8-0.707,2.707-0.5) -- (8-0.707,2-0.707-0.5) -- (8,1-0.5) -- (8+0.707,2-0.707-0.5) -- cycle;
  \node[anchor=-135] at (8.707,2.707-0.5) {\scriptsize $2$};
  \node[anchor=-90] at (8,3-0.5) {\scriptsize $1$};
  \node[anchor=-45] at (8-0.707,2.707-0.5) {\scriptsize $2$};
  \node[anchor=45] at (8-0.707,2-0.707-0.5) {\scriptsize $2$};
  \node[anchor=90] at (8,1-0.5) {\scriptsize $1$};
  \node[anchor=135] at (8+0.707,2-0.707-0.5) {\scriptsize $2$};
 
  \draw[color=gray!50,fill=gray!50] 
     (8-0.707,-2+0.707+0.5) -- (7,-2+0.5) -- (8-0.707,-2.707+0.5) -- cycle 
     (8.707,-2+0.707+0.5) -- (8,-1+0.5) -- (8-0.707,-2+0.707+0.5) -- cycle;
  \draw (9,-2+0.5) -- (8.707,-2+0.707+0.5) -- (8-0.707,-2+0.707+0.5) --  (8-0.707,-2.707+0.5) -- (8,-3+0.5) -- (8+0.707,-2.707+0.5) -- cycle;
  \node[anchor=180] at (9,-2+0.5) {\scriptsize $1$};
  \node[anchor=-135] at (8.707,-2+0.707+0.5) {\scriptsize $2$};
  \node[anchor=-45] at (8-0.707,-2+0.707+0.5) {\scriptsize $1$};
  \node[anchor=45] at (8-0.707,-2.707+0.5) {\scriptsize $2$};
  \node[anchor=90] at (8,-3+0.5) {\scriptsize $1$};
  \node[anchor=135] at (8+0.707,-2.707+0.5) {\scriptsize $3$};

  \draw[color=gray!50,fill=gray!50] 
     (12.707,0.707) -- (12,1) -- (12-0.707,0.707) -- cycle
     (12-0.707,0.707) -- (11,0) -- (12-0.707,-0.707) -- cycle
     (12-0.707,-0.707) -- (12,-1) -- (12.707,-0.707) -- cycle;
  \draw[thick] (13,0) -- (12.707,0.707) -- (12-0.707,0.707) -- (12-0.707,-0.707) -- (12+0.707,-0.707) -- cycle;
  \node[anchor=180] at (13,0) {\scriptsize $1$};
  \node[anchor=-135] at (12.707,0.707) {\scriptsize $2$};
  \node[anchor=-45] at (12-0.707,0.707) {\scriptsize $1$};
  \node[anchor=45] at (12-0.707,-0.707) {\scriptsize $1$};
  \node[anchor=135] at (12.707,-0.707) {\scriptsize $2$};

  \draw[color=gray!50,fill=gray!50] 
    (16.707,0.707) -- (16,1) -- (16-0.707,0.707) -- cycle
    (16-0.707,0.707) -- (15,0) -- (16-0.707,-0.707) -- cycle
    (16-0.707,-0.707) -- (16,-1) -- (16.707,-0.707) -- cycle
    (16.707,0.707) -- (17,0) -- (16.707,-0.707) -- cycle;
  \draw[thick] (16.707,0.707) -- (16-0.707,0.707) -- (16-0.707,-0.707) -- (16+0.707,-0.707) -- cycle;
  \node[anchor=-135] at (16.707,0.707) {\scriptsize $1$};
  \node[anchor=-45] at (16-0.707,0.707) {\scriptsize $1$};
  \node[anchor=45] at (16-0.707,-0.707) {\scriptsize $1$};
  \node[anchor=135] at (16.707,-0.707) {\scriptsize $1$};
  
  \node at (0,0) {$\pG_2$};
  \node at (4.1,0) {$\pG_2^{(5)}$};
  \node at (8.1,1.5) {$\pG_2^{(4)}$};
  \node at (8,-1.5) {$\pC_2$};
  \node at (12,0) {$\pA_2$};
  \node at (16.03,0) {$\pA_1\times\pA_1$};
  
  \node at (0,-3.5) {codim $6$};
  \node at (4,-3.5) {codim $5$};
  \node at (8,-3.5) {codim $4$};
  \node at (12,-3.5) {codim $3$};
  \node at (16,-3.5) {codim $2$};
\end{tikzpicture}}
\caption{Part of the projection cascade for the $\pG_2$ cluster variety.}
\label{figure!g2-cascade}
\end{center}
\end{figure}
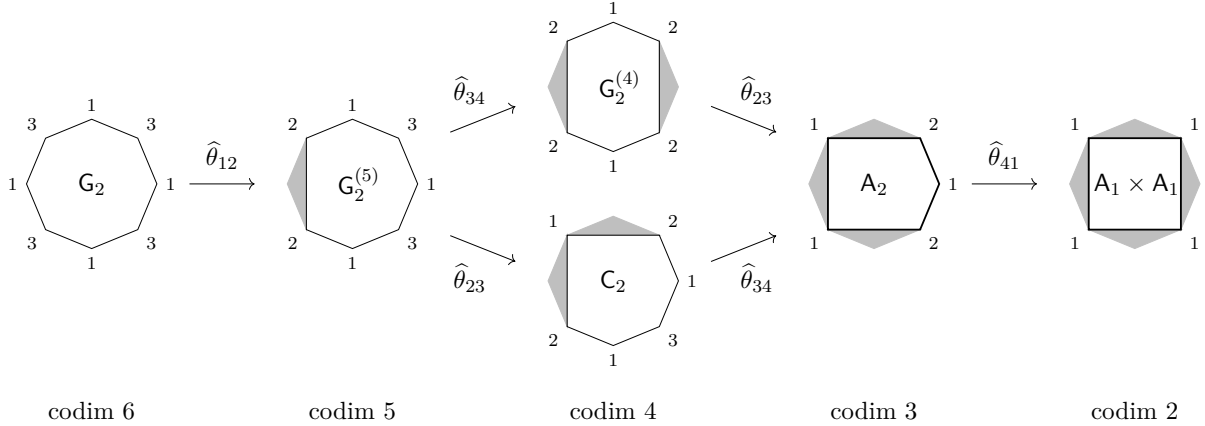
In particular, this projection cascade also allows us to define two intermediate formats: $\pG_2^{(5)}$ and $\pG_2^{(4)}$, where the superscript denotes the codimension. The two codimension 4 formats behave like the two codimension 4 formats Tom \& Jerry~\cite{bkr}. Indeed, $\pC_2$ format can be written as a Tom unprojection from $\pA_2$ format, and $\pG_2^{(4)}$ format as a Jerry unprojection.

This should be an instance of the more general observation that whenever two Looijenga pairs are related by blowing down a $(-1)$-curve in the boundary divisor $\pi\colon(Y',D')\to(Y,D)$ then there is a relationship between the mirror families, described in \cite[\S6.2]{ghk}. The family $\sX/B$ for $(Y,D)$ can be obtained from the family $\sX'/B'$ for $(Y',D')$ as a pullback along the morphism of affine toric varieties $B\to B'$ induced by the inclusion of cones $\pi^*\colon \NE(Y) \to \NE(Y')$.

\subsection{Main results}

For definitions and notation concerning Fano 3-folds, we refer to \S\ref{sec!fanos}. The main result of this paper is the classification and construction of all families of quasismooth Fano 3-folds in $\pC_2$ or $\pG_2$ format. The full classification is available from \cite{bigtables}. In total, we construct over 400 families in codimensions 4 and 5. 
There are none in codimension 6. About two-thirds of these families are prime. The following theorem highlights some more features of the classification.
\begin{thm} \leavevmode
\begin{enumerate}
\item Of the 29 candidates in codimension 4 of index $1$ and with no type I centre, 25 have at least one cluster format construction which is prime;
\item Of the 61 candidates in codimension 4 of index $\ge2$, 45 have at least one cluster format construction which is prime;
\item There are 50 families of codimension 5 Fano 3-folds in a cluster format.
\end{enumerate}
\end{thm}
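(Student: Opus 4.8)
The plan is to reduce the statement to a finite enumeration controlled by the $\ZZ^6$- and $\ZZ^8$-gradings of $\sA_{\pC_2}$ and $\sA_{\pG_2}$. A Fano 3-fold in cluster format is obtained by choosing a surjection $\ZZ^6\to\ZZ$ (resp.\ $\ZZ^8\to\ZZ$) — equivalently a $\CC^*$-action, i.e.\ a positive weighting of the $13$ (resp.\ $18$) generators — together with a general regular pullback $\phi$, so that the quotient of $\phi^{-1}(V)\setminus\{0\}$ is a $3$-fold $X\subset\PP(w_0,\dots,w_N)$ with $-K_X$ ample and at worst terminal cyclic quotient singularities. Using the canonical class of the cluster variety and the shape of the free resolution underlying \eqref{eqns!C2-format} and \eqref{eqns!G2-format}, one writes $K_X$ in terms of the weights; imposing the Fano condition and the requirement that the Hilbert series match one of the finitely many codimension $4$ and $5$ entries of the Graded Ring Database already leaves only finitely many admissible weight vectors to examine.

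For each admissible weighting I would then verify that a general member is \emph{quasismooth}, i.e.\ that $\phi^{-1}(V)\setminus\{0\}$ is smooth. Because $X$ is a general pullback of the key variety, a Bertini argument handles everything away from the coordinate strata; the content is a finite stratum-by-stratum analysis, using the explicit equations, of where some $\theta$- or $A$-variables vanish, checking that the base locus of the pullback linear system introduces no new singular points — or, where it unavoidably does, that the point is one of the permitted terminal quotients $\tfrac1r(a,-a,1)$ forced by the weights. The same analysis reads off the singularity basket, which must then agree with the basket attached to the candidate. Candidates admitting no weighting that passes all of these tests are precisely the ones excluded from parts (1) and (2).

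It remains to check, among the surviving constructions, which are \emph{prime} — that $-K_X$ generates $\Cl X$ and the anticanonical ring is not a proper Veronese subring, so that $X$ is not a disguised lower-codimension construction. Here I would exploit the factoriality of the cluster varieties (Gorenstein, with the extra symmetry of a cluster algebra and a small singular locus) together with a Grothendieck--Lefschetz-type argument for the general member, ruling out the degenerate alternatives one weighting at a time. In particular one must detect when a construction is secretly a pullback of $\pA_1\times\pA_1$, $\pA_2$, $\pG_2^{(4)}$ or $\pG_2^{(5)}$ format via the projection cascade of Figure~\ref{figure!g2-cascade}. Tabulating the outcomes against the list of $29$ index-$1$ codimension-$4$ candidates with no type I centre, the $61$ candidates of index $\ge 2$, and (for part (3)) the codimension-$5$ entries of the database, produces the counts $25$, $45$ and $50$. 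I expect the primality step to be the main obstacle: quasismoothness, though laborious, is a mechanical stratified Bertini computation, whereas certifying primality — and separating genuinely new families from cascade-pullbacks — is where the real work of the classification lies.
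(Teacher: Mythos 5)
Your overall architecture matches the paper's: reduce to a finite enumeration of $\ZZ$-gradings on $\sA_{\pC_2}$ and $\sA_{\pG_2}$ via a one-parameter subgroup $\rho\in M^\vee$, match Hilbert series against the database, verify quasismoothness stratum-by-stratum using the explicit singular locus of the cluster variety, read off the basket, and then decide primality.

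Where you diverge from the paper is the primality step, and here your sketch is both a different route and slightly off target. The paper's criterion (Lemma 5.2) is not a Grothendieck--Lefschetz argument for the general member: it applies Nagata's lemma to $k[\widehat Y]$, localises at $\tau=\phi^*(\theta)$, and uses the crucial structural fact (Lemma 2.6) that the chart $\widehat Y\cap(\tau\neq 0)$ is a \emph{complete intersection} of dimension $\ge 4$, hence parafactorial by SGA~2; factoriality of $k[\widehat Y]_\tau$ then descends to factoriality of $k[\widehat Y]$. The practical check is therefore that $Y\cap V(\tau)$ is $R_1$ (hence geometrically normal and irreducible over $\CC$). This machinery is what makes Theorem~5.3 possible: primality is equivalent to \emph{not} landing in the degenerate subformats $\PP^2\times\PP^2$, $(\PP^1)^3$, or rolling factors, which are exactly the cases where the format forces $\Cl(Y)$ to have rank $\ge 2$. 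Your framing of non-primality as ``secretly a pullback of $\pA_2$, $\pG_2^{(4)}$ or $\pG_2^{(5)}$ via the projection cascade'' conflates two distinct things: a construction can sit in a lower-codimension cluster format while still being prime, whereas the obstruction to primality detected by the paper is the appearance of these specific rank-$\ge 2$ key varieties inside $X_{\pC_2}$ or $X_{\pG_2}$.

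Two smaller points. First, the finiteness of the search polytope is not ``already'' implied by Fano $+$ Hilbert series matching: that fixes only the hyperplane $\sum\rho_i=\alpha_Y$, and one needs the degree inequalities from Propositions 4.4 and 4.7 (notably $d(\theta_i)>0$, $d(\theta_{ij})>0$) — themselves consequences of the singular-locus computation — to bound the polytope. Second, while you do say the basket must agree with the candidate, the paper has to do genuine work here (the ice-cream Lemma 6.8): there exist quasismooth pullbacks with the right Hilbert series but non-terminal quotient singularities or the wrong basket, and showing that matching the basket \emph{vector} suffices is not automatic. Your Bertini-style treatment of quasismoothness is also somewhat optimistic — the paper's remark after Lemma 2.10 warns that $\phi^{-1}(\sing X)=\emptyset$ does \emph{not} imply $\sing\widehat Y=\emptyset$, and §6.3 documents failure modes (2) and (3) of exactly this type, necessitating a direct Jacobian check per chart.
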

In particular, when combined with nonexistence results of Prokhorov, part (2) answers the question of existence of Fano 3-folds in codimension 4 with large Fano index.
\begin{cor}
For each candidate Fano 3-fold Hilbert series appearing in \cite{grdb} with codimension 4 and Fano index $q\ge4$, then either there exists a prime Fano 3-fold with that Hilbert series or, by the work of Prokhorov, no such Fano 3-fold exists.
\end{cor}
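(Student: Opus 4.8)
The plan is to prove the corollary as a finite bookkeeping exercise, combining the classification summarised in the Theorem with the non-existence results of Prokhorov. First I would use \cite{grdb} to extract the complete (finite) list $\mathcal{L}$ of candidate Fano 3-fold Hilbert series that have codimension $4$ and Fano index $q \ge 4$. The assertion to be verified is that $\mathcal{L}$ splits as a disjoint union $\mathcal{L} = \mathcal{L}_{\mathrm{con}} \sqcup \mathcal{L}_{\mathrm{non}}$, where every Hilbert series in $\mathcal{L}_{\mathrm{con}}$ is realised by a prime quasismooth Fano 3-fold in $\pC_2$ or $\pG_2$ format, and every Hilbert series in $\mathcal{L}_{\mathrm{non}}$ is ruled out by \cite{P13} or \cite{P16}.

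For $\mathcal{L}_{\mathrm{con}}$ I would intersect the full cluster-format classification (available from \cite{bigtables}, and summarised for index $\ge 2$ in part (2) of the Theorem) with $\mathcal{L}$: for each candidate in $\mathcal{L}$ one checks whether it arises as a quasismooth weighted regular pullback of $X_{\pC_2}$ or $X_{\pG_2}$ for an admissible grading, and if so reads off from the tables whether the resulting family is prime in the sense of \S\ref{sec!fanos}. This is exactly part (2) of the Theorem restricted to the index-$\ge 4$ sublist. For $\mathcal{L}_{\mathrm{non}} = \mathcal{L} \setminus \mathcal{L}_{\mathrm{con}}$ I would go through the remaining Hilbert series one at a time, and match the associated numerical data — the degree $A^3$, the values $h^0(-K_X)$, $h^0(2(-K_X))$, etc., and above all the basket of terminal quotient singularities — against the cases that Prokhorov proves cannot be realised by any $\QQ$-Fano 3-fold of that index. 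Since Prokhorov's hypotheses are phrased intrinsically, in terms of the Fano index and the singular locus rather than an embedding, one also has to confirm that membership of $\mathcal{L}$ (codimension $4$ together with $q \ge 4$) really does place each leftover candidate within the scope of his theorems.

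The step I expect to be the main obstacle is closing this dichotomy exhaustively: there must be no Hilbert series in $\mathcal{L}$ that is simultaneously (a) not the Hilbert series of any prime quasismooth member of $\pC_2$ or $\pG_2$ format and (b) not covered by Prokhorov's non-existence statements. Any such residual candidate would have to be treated by hand, either by exhibiting a prime Fano in some other Gorenstein format or by a direct geometric non-existence argument tailored to that single Hilbert series. Granting that the tables exhibit no such gap, the two sublists $\mathcal{L}_{\mathrm{con}}$ and $\mathcal{L}_{\mathrm{non}}$ together exhaust $\mathcal{L}$ and give precisely the claimed alternative, so the corollary follows.
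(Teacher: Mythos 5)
Your proposal is exactly the paper's approach: the corollary is a finite bookkeeping exercise, carried out in the paper by Table~3, which for each Fano index $q \ge 4$ shows that the number of GRDB candidates in codimension~4 decomposes as (number ruled out by Prokhorov) plus (number with a prime cluster-format construction). Your anticipated obstacle --- a residual candidate neither constructed nor excluded --- is precisely what that table verifies does not occur.
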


Several of the codimension 4 candidates have constructions in both $\pC_2$ and $\pG_2^{(4)}$ formats, echoing Tom and Jerry \cite{bkr}. Around 270 of our constructions in codimension 4 have index 1 and a type I centre, and so are special subfamilies of those appearing in \cite{bkr}.

We give a criterion for checking primality in cluster formats. It turns out that the families which are not prime are related to $\PP^2\times\PP^2$, $(\PP^1)^3$ or rolling factors formats. In particular, this answers the question of primality for those cases which overlap with \cite{bkr}.

\subsection{Outline of the paper}

In \S\ref{section!cluster-varieties} we give a brief introduction to cluster varieties, including their important properties. We also give a crash course on Gorenstein formats. In \S\S\ref{section!C2-cluster-format}-\ref{section!G2-cluster-format} we look at the $\pC_2$ and $\pG_2$ rank 2 cluster formats in more detail and explain some ways of constructing them. We make a detailed study of their singular loci and the singular loci of some hyperplane sections, since this plays a crucial part in excluding bad cases from consideration. In \S\ref{sec!fanos} we explain how we apply these formats to construct Fano 3-folds and give many examples. In \S\ref{sec!classification} we explain the computer algorithm that we use to make our classification.

\subsection{Conventions and terminology}
\begin{itemize}
\item Cluster varieties can be defined as schemes over $\ZZ$ but, since the applications we have in mind are constructing complex projective varieties, we choose to work over $\CC$ throughout.
\item We write $\TT^k = (\CC^\times)^k$ for the torus of rank $k$.
\item We write $\Dih_{2n}$ for the dihedral group of order $2n$, which acts on the set $\{1,\ldots,n\}$ labelling the vertices of a regular $n$-gon cyclically. Our cluster formats have variables $\theta_i$, $\theta_{ij}$ etc.\ indexed by $i,j\in\{1,\ldots,n\}$ and an action of $\Dih_{2n}$, where $\pi\in \Dih_{2n}$ acts by $\theta_{ij}\mapsto \theta_{\pi(i)\pi(j)}$ etc. Throughout this paper we always consider our labellings to be \emph{unordered}, e.g.\ $\theta_{ij}=\theta_{ji}$.
\item We write $\CI^{(c)}$ to denote a complete intersection of codimension $c$. 
\item We write down a skew-symmetric matrix $M$ by specifying the strict upper triangular part only. We use $\Pf_k M$ to denote the ideal generated by the $k\times k$ maximal Pfaffians of $M$. 
\item We make free reference to the terminology of Tom \& Jerry \cite{bkr}.
\item A variety $Y$ in weighted projective space is \emph{quasismooth} if the affine cone $\widehat{Y}$ has a worst an isolated singularity at the vertex. 
\item A variable $x$ in a graded ring is \emph{redundant} if it satisfies a relation of the form $x=\cdots$, where $\cdots$ is an expression in terms of the other ring generators.
\end{itemize}

\subsection{Acknowledgements}
It is our pleasure to dedicate this article to our former advisor, Miles Reid, on the occasion of his 70th birthday. Clearly, this work is steeped in his influence, right down to the dilatory write up process. We also thank Gavin Brown, Fabrizio Catanese, Alessio Corti, Paul Hacking, Bal\'azs Szendr\H{o}i, for helpful conversations and encouragement at various stages of this project. SC was supported by ERC Advanced grant no.~340258, TADMICAMT. 

%\StephenComment{other names?} 

\section{Cluster varieties of rank 2} \label{section!cluster-varieties}

We only give a very brief recap of the theory established by Gross, Hacking \& Keel~\cite{ghk} since we are primarily interested in using our two cluster varieties $X_{\pC_2}$ and $X_{\pG_2}$ to construct examples of Fano $3$-folds. In particular we summarise the results of several calculations without providing many of the details. Hopefully this is enough to provide some motivation for their existence and basic properties, as well as giving some hints as to how other families of log Calabi--Yau surfaces (or higher dimensional varieties) could be used as key varieties. The reader is perfectly entitled to ignore this section if they are willing to take our key varieties $X_{\pC_2}$ and $X_{\pG_2}$ as black boxes with the properties described in \S\ref{subsection!basic-properties} and \S\ref{section!cluster-formats}. 

\subsection{Looijenga pairs}
A \emph{Looijenga pair} $(Y,D)$ is a projective rational surface $Y$ together with a reduced anticanonical cycle $D=\sum_{i=1}^k D_i \in|{-K_{Y}}|$. 

\subsubsection{The $\pA_2$, $\pC_2$ and $\pG_2$ Looijenga pairs} 
We will consider $(Y,D)$ to be one of the following three cases:\footnote{We could also consider the $\pA_1\times\pA_1$ case, with $k=4$ and $(-D_i^2 : i=1,\ldots,4)=(0,0,0,0)$.}
\begin{enumerate}
\item[($\pA_2$)] let $k=5$ and $(-D_i^2 : i=1,\ldots,5)=(1,1,1,1,1)$,
\item[($\pC_2$)] let $k=6$ and $(-D_i^2 : i=1,\ldots,6)=(2,1,2,1,2,1)$,
\item[($\pG_2$)] let $k=8$ and $(-D_i^2 : i=1,\ldots,8)=(3,1,3,1,3,1,3,1)$.
\end{enumerate}
For convenience, in the $\pC_2$ case we relabel the boundary divisors $D_1,D_{12},D_2,\ldots,D_{31}$, so that $D_i^2=-2$ and $D_{ij}^2=-1$, and similarly in the $\pG_2$ case.

\subsubsection{Toric models} \label{sect!toric-models}
Any Looijenga pair can be obtained, possibly after a sequence of toric blowups, as
the blowup of a toric surface $(\bar{Y},\bar{D})$ at points along the toric boundary divisor $\bar{D}$, such that $D\subset Y$ is the strict transform of $\bar{D}\subset\bar{Y}$ (cf.\ \cite[Proposition 1.3]{ghk}). We can realise special cases\footnote{More generally, we could consider blowing up points $e_i$ which lie in general position along $\bar{D}_i\subset \bar{Y}$. However this does not change the final description of our cluster variety.} of the three examples above by considering blowups
\[ \pi_{\pA_2}\colon Y_{\pA_2}\to\PP^2, \qquad \pi_{\pC_2}\colon Y_{\pC_2}\to\PP^2, \qquad \pi_{\pG_2}\colon Y_{\pG_2}\to\PP^1\times\PP^1 \]
at the configurations of points described below, and shown in Figure~\ref{fig!point-configs}. 

Let $\exc(p)$ be the exceptional divisor above a point $p$, let $\pi^{-1}(\bar{C})$ be the strict transform of a curve $\bar C$ under a birational map $\pi$, let $L_{p,q}$ be the line in $\PP^2$ which passes through two points $p,q$, and let $M_{p,q,r}$ be the curve of bidegree $(1,1)$ in $\PP^1\times\PP^1$ which passes through three points $p,q,r$. Then the blowups we consider are given by the following:

\begin{enumerate}
\item[($\pA_2$)] Let $\bar{D}_2+\bar{D}_4+\bar{D}_5$ be the toric boundary components of $\PP^2$. We obtain $Y_{\pA_2}$ by blowing up the two intersection points $d_1=\bar{D}_5\cap \bar{D}_2$, $d_3=\bar{D}_2\cap \bar{D}_4$ and two general points $e_4\in\bar{D}_4$, $e_5\in\bar{D}_5$. 

The anticanonical cycle $D\subset Y_{\pA_2}$ is given by $D_1 = \exc(d_1)$, $D_2 = \pi^{-1}(\bar{D}_2)$, $D_3 = \exc(d_3)$, $D_4 = \pi^{-1}(\bar{D}_4)$ and $D_5 = \pi^{-1}(\bar{D}_5)$. Moreover, we note that $Y_{\pA_2}$ contains five interior $(-1)$-curves $E_1 = \pi^{-1}(L_{d_1,e_4})$, $E_2 = \pi^{-1}(L_{e_4,e_5})$, $E_3 = \pi^{-1}(L_{d_3,e_5})$, $E_4 = \exc(e_4)$ and $E_5 = \exc(e_5)$.

\item[($\pC_2$)] Let $\bar{D}_1+\bar{D}_2+\bar{D}_3$ be the toric boundary components of $\PP^2$. We obtain $Y_{\pC_2}$ by blowing up the three intersection points $d_{ij}=\bar{D}_i\cap \bar{D}_j$ and three points $e_i\in\bar{D}_i\cap \bar{F}$, where $\bar{F}$ is a line in general position with respect to $\bar{D}$.

Let $(i,j,k)$ vary over all $\Dih_6$-permutations of $(1,2,3)$. Then the anticanonical cycle $D\subset Y_{\pA_2}$ is given by $D_i = \pi^{-1}(\bar{D}_i)$ and $D_{ij} = \exc(d_{ij})$. Moreover, we note that $Y_{\pC_2}$ contains six interior $(-1)$-curves $E_i = \exc(e_i)$ and $E_{ij} = \pi^{-1}(L_{d_{ij},e_k})$, and one interior $(-2)$-curve $F=\pi^{-1}(\bar{F})$.

\item[($\pG_2$)] Let $\bar{D}_1+\bar{D}_2+\bar{D}_3+\bar{D}_4$ be the toric boundary components of $\PP^1\times\PP^1$. We obtain $Y_{\pG_2}$ by blowing up the four intersection points $d_{ij}=\bar{D}_i\cap \bar{D}_j$ and four points $e_i\in\bar{D}_i\cap(\bar{F}_{13}\cup\bar{F}_{24})$, where $\bar{F}_{13}$ and $\bar{F}_{24}$ are two curves of bidegree $(1,0)$ and $(0,1)$ which are in general position with respect to $\bar{D}$.

Let $(i,j,k,l)$ vary over all $\Dih_8$-permutations of $(1,2,3,4)$. Then the anticanonical cycle $D\subset Y_{\pG_2}$ is given by $D_i = \pi^{-1}(\bar{D}_i)$ and $D_{ij}=\exc(d_{ij})$. Moreover, we note that $Y_{\pG_2}$ contains eight interior $(-1)$-curves $E_i = \exc(e_i)$ and $E_{ij}=\pi^{-1}(M_{d_{ij},e_k,e_l})$, and two interior $(-2)$-curves $F_{ik} = \pi^{-1}(\bar{F}_{ik})$. 
\end{enumerate}

\begin{figure}[h]
\begin{center}
\begin{tikzpicture}[scale=1]
  % A2
  %\draw[gray] (1,0)--(0,1) (2,0)--(0,1) (0,2)--(1,0);
  \draw[thick] (-1,0) -- (3,0);
  \draw[thick] (3,-1) -- (-1,3);
  \draw[thick] (0,-1) -- (0,3);
  
  \node at (1,-1.5) {$\pA_2$};
  \node at (1,0) [label={270:$e_5$}] {$\bullet$};
  \node at (2,0) [label={above:$d_1$}] {$\bullet$};
  \node at (0,1) [label={180:$e_4$}] {$\bullet$};
  \node at (0,2) [label={right:$d_3$}] {$\bullet$};

  % C2  
  %\draw[gray] (6,0)--(5,1) (6,0)--(6,1) (6,0)--(5,2) (5,1)--(6,1) (5,1) -- (7,0) (6,1) -- (5,0);
  \draw[thick] (4,0) -- (8,0);
  \draw[thick] (5,3) -- (5,-1);
  \draw[thick] (4,3) -- (8,-1);
  \draw[dashed] (4,2) -- (8,-2/5);
  
  \node at (6,-1.5) {$\pC_2$};
  \node at (5,0) [label={225:$d_{31}$}] {$\bullet$};
  \node at (37/5,0) [label={above:$e_3$}] {$\bullet$};
  \node at (7,0) [label={below:$d_{23}$}] {$\bullet$};
  \node at (5,7/5) [label={180:$e_1$}] {$\bullet$};
  \node at (5,2) [label={right:$d_{12}$}] {$\bullet$};
  \node at (6+1/2,1/2) [label={above:$e_2$}] {$\bullet$};
  
  \node at (3.7,2.1) {$\bar{F}$};

  % G2 
  %\draw[gray] (11,0)--(10,1/2) (11,0)--(10,1) (11,0)--(10,3/2) (12,0)--(10,1/2) (12,0)--(10,1) (12,0)--(10,3/2) (11,0) -- (10,2);
  %\draw[gray] plot [smooth cycle] coordinates {(11,0) (12,0) (10,1/2) (10,2/2)};
  %\draw[gray] plot [smooth cycle] coordinates {(11,0) (12,0) (10,2/2) (10,3/2)};
  %\draw[gray] plot [smooth cycle] coordinates {(11,0) (12,0) (10,3/2) (10,1/2)};  
  \draw[thick] (9,0) -- (13,0);
  \draw[thick] (12,-1) -- (12,3);
  \draw[thick] (13,2) -- (9,2);
  \draw[thick] (10,-1) -- (10,3);
  \draw[dashed] (11,-1) -- (11,3);
  \draw[dashed] (9,1) -- (13,1);
  
  \node at (11,-1.5) {$\pG_2$};
  \node at (10,0) [label={225:$d_{41}$}] {$\bullet$};
  \node at (11,0) [label={[label distance=-2mm]-45:$e_4$}] {$\bullet$};
  \node at (12,0) [label={315:$d_{34}$}] {$\bullet$};
  \node at (10,1) [label={[label distance=-2mm]-135:$e_1$}] {$\bullet$};
  \node at (12,1) [label={[label distance=-2mm]45:$e_3$}] {$\bullet$};
  \node at (10,2) [label={135:$d_{12}$}] {$\bullet$};
  \node at (11,2) [label={[label distance=-2mm]135:$e_2$}] {$\bullet$};
  \node at (12,2) [label={45:$d_{23}$}] {$\bullet$};
  
  \node at (11,3.3) {$\bar{F}_{24}$};
  \node at (13.3,1) {$\bar{F}_{13}$};
  
  %\node[fill=white] at (6,1) {\textcolor{red}{to be finished}};
\end{tikzpicture}
\caption{The configurations of points blown up to obtain $Y_{\pA_2}$, $Y_{\pC_2}$ and $Y_{\pG_2}$.}
\label{fig!point-configs}
\end{center}
\end{figure}
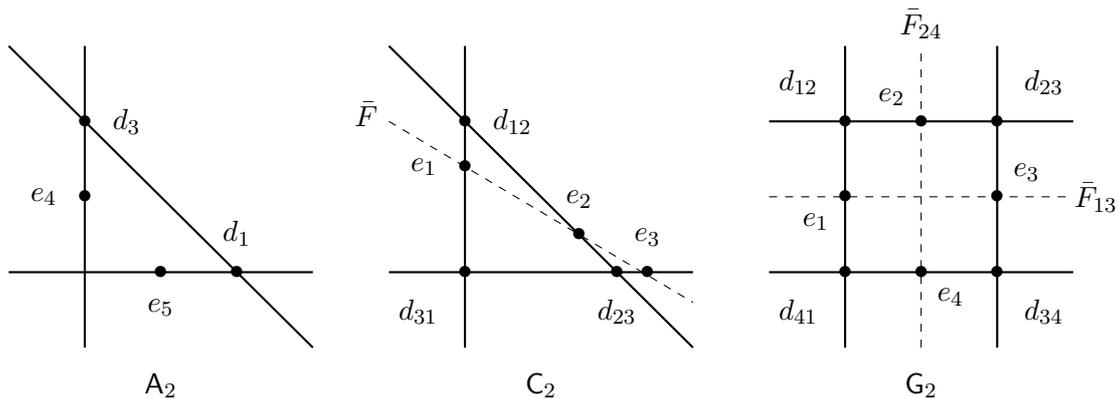

\subsubsection{The Mori cone $\NE(Y)$}
A Looijenga pair $(Y,D)$ is said to be \emph{positive} if the cycle $D$ supports an ample divisor. In particular this implies that the Mori cone $\NE(Y)\subset N_1(Y)_\QQ$ is a closed finite polyhedral cone. 

In the $\pA_2$, $\pC_2$ and $\pG_2$ cases, $(Y,D)$ is positive and the Mori cone $\NE(Y)$ is spanned by $10$, $13$ and $18$ extremal rays respectively, corresponding to the classes $[D_i],[D_{ij}],[E_i],[E_{ij}],[F],[F_{ik}]$ described above. A $(-1)$-curve contained in $Y\setminus D$ must intersect the boundary divisor $D$ in precisely one point, in the interior of a component $D_i$ or $D_{ij}$. In each of the three cases there is precisely one $(-1)$ curve $E_i$ which intersects $D_i$ and one $(-1)$-curve $E_{ij}$ which intersects $D_{ij}$. Figure~\ref{figure!dual-intersection-diagrams} depicts the dual intersection diagrams for the curves in $Y$ belonging to the extremal rays of $\NE(Y)$.\footnote{There are two pairs of double edges in the $\pG_2$ graph since $E_{12}\cdot E_{34}=E_{23}\cdot E_{41}=2$ in $Y_{\pG_2}$, but otherwise all of the (non-self-)intersection numbers are $0$ or $1$.} 
\begin{figure}[h]
\begin{center} 
\resizebox{\textwidth}{!}{
\begin{tikzpicture}  
   \node (D1) at ({4*cos(0*72)},{4*sin(0*72)}) {$D_1$}; 
   \node (D2) at ({4*cos(1*72)},{4*sin(1*72)}) {$D_2$}; 
   \node (D3) at ({4*cos(2*72)},{4*sin(2*72)}) {$D_3$}; 
   \node (D4) at ({4*cos(3*72)},{4*sin(3*72)}) {$D_4$}; 
   \node (D5) at ({4*cos(4*72)},{4*sin(4*72)}) {$D_5$};
   \node (E1) at ({2*cos(0*72)},{2*sin(0*72)}) {$E_1$}; 
   \node (E2) at ({2*cos(1*72)},{2*sin(1*72)}) {$E_2$}; 
   \node (E3) at ({2*cos(2*72)},{2*sin(2*72)}) {$E_3$}; 
   \node (E4) at ({2*cos(3*72)},{2*sin(3*72)}) {$E_4$}; 
   \node (E5) at ({2*cos(4*72)},{2*sin(4*72)}) {$E_5$}; 
   
   \draw (D5) -- (D1) -- (D2) -- (D3) -- (D4) -- (D5) -- (E5) -- (E3) -- (E1) -- (E4) -- (E2) -- (D2);
   \draw (D3) -- (E3) (D4) -- (E4) (D1) -- (E1)  (E2) -- (E5);

   \node (D31)  at ({10+4*cos(0*60)},{4*sin(0*60)}) {$D_1$}; 
   \node (D1) at ({10+4*cos(1*60)},{4*sin(1*60)}) {$D_{12}$}; 
   \node (D12)  at ({10+4*cos(2*60)},{4*sin(2*60)}) {$D_2$}; 
   \node (D2) at ({10+4*cos(3*60)},{4*sin(3*60)}) {$D_{23}$}; 
   \node (D23)  at ({10+4*cos(4*60)},{4*sin(4*60)}) {$D_3$}; 
   \node (D3) at ({10+4*cos(5*60)},{4*sin(5*60)}) {$D_{31}$}; 
   \node (B31)  at ({10+2.5*cos(0*60)},{2.5*sin(0*60)}) {$E_1$}; 
   \node (A1) at ({10+2.5*cos(1*60)},{2.5*sin(1*60)}) {$E_{12}$}; 
   \node (B12)  at ({10+2.5*cos(2*60)},{2.5*sin(2*60)}) {$E_2$}; 
   \node (A2) at ({10+2.5*cos(3*60)},{2.5*sin(3*60)}) {$E_{23}$}; 
   \node (B23)  at ({10+2.5*cos(4*60)},{2.5*sin(4*60)}) {$E_3$}; 
   \node (A3) at ({10+2.5*cos(5*60)},{2.5*sin(5*60)}) {$E_{31}$}; 
   \node (C) at (10,0) {$F$}; 
   
   \draw (D1)--(D12)--(D2)--(D23)--(D3)--(D31)--(D1)--(A1)--(A2)--(A3)--(A1);
   \draw (D2)--(A2) (D3)--(A3);
   \draw (B12)--(C)--(B31) (C)--(B23); 
   \draw (B23) to [out=80,in=220](A1);
   \draw (B31) to [out=200,in=340](A2);
   \draw (B12) to [out=320,in=100](A3);
   \draw (D12)--(B12) (D23)--(B23) (D31)--(B31);
 
   \node (E41)  at ({20+4*cos(0*45)},{4*sin(0*45)}) {$D_1$}; 
   \node (E1) at ({20+4*cos(1*45)},{4*sin(1*45)}) {$D_{12}$}; 
   \node (E12)  at ({20+4*cos(2*45)},{4*sin(2*45)}) {$D_2$}; 
   \node (E2) at ({20+4*cos(3*45)},{4*sin(3*45)}) {$D_{23}$}; 
   \node (E23)  at ({20+4*cos(4*45)},{4*sin(4*45)}) {$D_3$}; 
   \node (E3) at ({20+4*cos(5*45)},{4*sin(5*45)}) {$D_{34}$}; 
   \node (E34)  at ({20+4*cos(6*45)},{4*sin(6*45)}) {$D_4$}; 
   \node (E4) at ({20+4*cos(7*45)},{4*sin(7*45)}) {$D_{41}$}; 
   
   \node (B41)  at ({20+3*cos(0*22.5)},{3*sin(0*22.5)}) {$E_1$};  
   \node (A1) at ({20+3*cos(2*22.5)},{3*sin(2*22.5)}) {$E_{12}$}; 
   \node (B12)  at ({20+3*cos(4*22.5)},{3*sin(4*22.5)}) {$E_2$};  
   \node (A2) at ({20+3*cos(6*22.5)},{3*sin(6*22.5)}) {$E_{23}$};
   \node (B23)  at ({20+3*cos(8*22.5)},{3*sin(8*22.5)}) {$E_3$}; 
   \node (A3) at ({20+3*cos(10*22.5)},{3*sin(10*22.5)}) {$E_{34}$}; 
   \node (B34)  at ({20+3*cos(12*22.5)},{3*sin(12*22.5)}) {$E_4$};  
   \node (A4) at ({20+3*cos(14*22.5)},{3*sin(14*22.5)}) {$E_{41}$}; 
   \node (C1) at (19.5,0.5) {$F_{24}$}; 
   \node (C2) at (20.5,-0.5) {$F_{13}$}; 
   
   \draw (E1)--(E12)--(E2)--(E23)--(E3)--(E34)--(E4)--(E41)--(E1);
   \draw (E1)--(A1) (E2)--(A2) (E3)--(A3) (E4)--(A4);
   \draw (E12)--(B12) (E23)--(B23) (E34)--(B34) (E41)--(B41);
   \draw (C1)--(C2);
   \draw (A1) to [out=190,in=80](A3);
   \draw (A1) to [out=180+80,in=10](A3);
   \draw (A2) to [out=270+10,in=90+80](A4);
   \draw (A2) to [out=270+80,in=90+10](A4);   
   \draw (A1)--(A2)--(A3)--(A4)--(A1);
   \draw (B34)--(A1)--(B23)--(A4)--(B12)--(A3)--(B41)--(A2)--(B34);  
   \draw (B12)--(C1)--(B34) (B23)--(C2)--(B41);
\end{tikzpicture}}
\end{center}
\caption{Dual intersection diagrams for the extremal curves on $Y_{\pA_2}$, $Y_{\pC_2}$ and $Y_{\pG_2}$.}
\label{figure!dual-intersection-diagrams}
\end{figure}
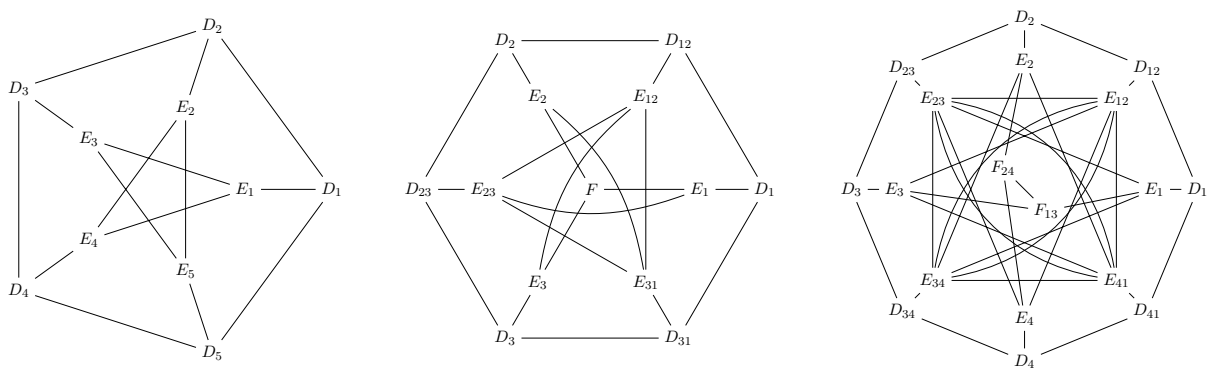

\subsubsection{The intersection pairing and the Looijenga roots}
We have the usual intersection pairing: 
\[ ({\:}\cdot{\:}) \colon N_1(Y)_\QQ\times N^1(Y)_\QQ \to \QQ \]
Let $\bm{D}\subset N^1(Y)_\QQ$ be the sublattice $\bm{D}=\bigoplus_{i=1}^k\ZZ [D_i]$, spanned by the components of $D$. Then elements $\alpha$ of the subspace 
\[ \bm{D}^\perp=\left\{ [C]\in N_1(Y)_\QQ : [D]\cdot[C] = 0 \text{ for all }D\in\bm{D} \right\}\subset N_1(Y)_\QQ \]
satisfying $\alpha^2=-2$ are called \emph{Looijenga roots}.\footnote{In general, for a Looijenga pair $(Y,D)$ whose boundary divisor has a negative definite intersection matrix, there is a further condition to ensure that $\alpha$ corresponds to the parallel transport of the class of an internal $(-2)$-curve on a deformation equivalent pair, cf.\ \cite[Theorem 3.3]{ghk2}.} In the $\pA_2$ case $\bm{D}^\perp=\emptyset$, in the $\pC_2$ case $\bm{D}^\perp=\ZZ[F]$ forms a root system of type $\pA_1$ and in the $\pG_2$ case $\bm{D}^\perp=\ZZ\langle[F_{13}],[F_{24}]\rangle$ forms a root system of type $\pA_2$.

\subsection{The mirror family $\sX$ and the cluster variety $X$}
We now describe the mirror family $\sX$ introduced by Gross, Hacking \& Keel and the related cluster variety $X$. In both cases these are families of mildly singular (log canonical) surfaces. The first $\sX$ is defined over a singular base variety $B$, whereas $X$ is defined over a much nicer base variety $\Aa^n$.

\subsubsection{The mirror family $\sX$} \label{sect!curlyX}
The mirror family $\sX$, for a Looijenga pair $(Y,D)$, is a deformation of the \emph{vertex} 
\[ \mathbb{V}_k = \left(\Aa^2_{\theta_1,\theta_2} \cup \Aa^2_{\theta_2,\theta_3} \cup \cdots \cup \Aa^2_{\theta_{k-1},\theta_k} \cup \Aa^2_{\theta_k,\theta_1} \right) \subseteq \Aa^{k}_{\theta_1,\ldots,\theta_k}, \] 
with $k$ equal to the number of components of $D=\bigcup_{i=1}^kD_i$, defined by introducing theta functions and using the machinery of scattering diagrams. If $(Y,D)$ is positive, then the construction yields an algebraic variety $\sX$ with the following nice properties:
\begin{enumerate}
\item $\sX/B$ is a deformation of $\mathbb{V}_k$ over the affine toric variety:
\[ B := \Spec\left(\CC[\NE(Y)]\right) = \Spec \left(\CC\left[ z^{C} : [C] \in \NE(Y) \right] \right) \]
\item $\sX/B$ is a flat family of affine Gorenstein surfaces with at worst semi-log canonical singularities,
\item the action of the torus $\TT^D = \bm{D}\otimes \CC^*$ on $B$, given by 
\[ \lambda_i\cdot \left(z^{C}\right) = \lambda_i^{D_i\cdot C}z^{C} \qquad \text{for $i=1,\ldots,k$} \]
extends uniquely to a $\TT^D$-action on $\sX$.
\end{enumerate}
Our only problem with trying to use $\sX/B$ as a key variety directly is that the total space $\sX$ is not Gorenstein, but only \emph{relatively} Gorenstein. 

\subsubsection{The cluster variety $X$}
For that reason we consider a slightly different family, by first taking the restriction $\sX|_{\TT^n}$ to the structure torus $\TT^n\subset B$ and then by considering the variety $X/\Aa^n$, obtained by the compactification $\TT^n\subset\Aa^n$ with respect to some natural choice of coordinates on $\TT^n$. 
\begin{center}\begin{tikzpicture}[scale=0.8]
   \node (A) at (0,2) {$\sX$};
   \node (B) at (0,0) {$B$};
   \node (C) at (2,2) {$\sX\vert_{\TT^n}$};
   \node (D) at (2,0) {$\TT^n$};
   \node (E) at (4,2) {$X$};
   \node (F) at (4,0) {$\Aa^n$};
   
   \draw[->] (A) -- (B);
   \draw[->] (C)--(D);
   \draw[->] (E)--(F);
   \draw[left hook->] (D)--(B);
   \draw[right hook->] (D)--(F);
   \draw[left hook->] (C)--(A);
   \draw[right hook->] (C)--(E);
\end{tikzpicture}\end{center} 

This choice of coordinates is described in \S\ref{section!C2-format} for the $\pC_2$ cluster variety and in \S\ref{section!G2-format} for the $\pG_2$ cluster variety.

\subsection{Basic properties}\label{subsection!basic-properties}
The cluster variety $X$ inherits all of the good properties of the mirror family $\sX$. In particular, $X$ is an normal affine Gorenstein variety and has a $\TT^D$ action.
We summarise some of the basic properties of the cluster variety $X$ that will be important later on.

\begin{prop}
The cluster variety $X=\Spec \sA$ has the following properties:
\begin{enumerate}
\item $X$ is a normal, prime, Gorenstein, affine variety,
\item $X$ has an action by $G\times \TT^{D}$ for some finite symmetry group $G$.
\end{enumerate}
\end{prop}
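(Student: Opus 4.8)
The plan is to deduce everything from the properties of the Gross--Hacking--Keel mirror family $\sX/B$ recalled above, together with the fact that $X$ is obtained from $\sX$ by the flat, torus-equivariant base-change picture in the displayed diagram. First I would observe that $X\to\Aa^n$ is by construction the closure of $\sX|_{\TT^n}\to\TT^n$, hence flat over $\Aa^n$ with geometrically connected fibres (the generic fibre being a log Calabi--Yau surface), so $X$ is itself connected; combined with the fact that the total space is reduced (which one sees from the explicit equations, or from flatness plus reducedness of a general fibre) this already gives that $X$ is \emph{prime}, i.e.\ $\sA$ is a domain. For \emph{Gorenstein}: the family $\sX/B$ is relatively Gorenstein and $B$ is an affine toric variety, hence Gorenstein (one could alternatively note $B$ has rational, even toric, singularities); restricting over the smooth locus $\TT^n$ and then taking the closure $X/\Aa^n$, the base $\Aa^n$ is smooth, so relatively Gorenstein over a regular base forces $X$ to be Gorenstein. (In the concrete $\pC_2$ and $\pG_2$ cases this is also visible directly: the $9$ relations with $16$ syzygies, resp.\ $20$ relations with $64$ syzygies, assemble into a self-dual resolution of length equal to the codimension.)

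Next I would establish \emph{normality}. Since $X$ is Gorenstein it is Cohen--Macaulay, so by Serre's criterion it suffices to check that $X$ is regular in codimension $1$, i.e.\ that $\sing X$ has codimension $\ge 2$. Here I would invoke the analysis of the singular locus promised in \S\ref{section!C2-cluster-format} and \S\ref{section!G2-cluster-format}: the cluster variety $X$ is a $9$-fold (resp.\ $12$-fold), and one checks on the explicit equations that $\sing X$ is cut out in higher codimension — indeed $X$ is smooth over the open locus where the coefficients $A_i$ are invertible (there it is a torus times a log CY surface with at worst du Val singularities in codimension $2$), and the boundary strata where some $A_i$ vanish also have small codimension in $X$. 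Since $X$ is $R_1+S_2$ it is normal. Alternatively, normality descends from the GHK statement that $\sX|_{\TT^n}$ has normal (log canonical) fibres over a normal base, together with the observation that the chosen coordinates give a toric-flavoured compactification that does not introduce non-normal behaviour.

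For part (2), the $\TT^D$-action is already given: property (3) of \S\ref{sect!curlyX} supplies a $\TT^D = \bm{D}\otimes\CC^*$-action on $\sX$, which preserves $\sX|_{\TT^n}$ and hence extends to $X$ by the equivariance of the compactification $\TT^n\subset\Aa^n$ (one must check the chosen coordinates on $\TT^n$ are $\TT^D$-weight vectors, which is how they were selected). The finite group $G$ I would take to be the symmetry group of the defining combinatorial data: the automorphisms of the cycle $D$ together with any root-system symmetries, which is exactly the dihedral group $\Dih_{2k}$ acting by $\theta_{ij}\mapsto\theta_{\pi(i)\pi(j)}$, $A_{ij}\mapsto A_{\pi(i)\pi(j)}$, etc.\ (possibly enlarged by an extra involution in the $\pG_2$ case coming from the $\lambda_{13}\leftrightarrow\lambda_{24}$ symmetry). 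That $G$ acts can be read off the explicit relations \eqref{eqns!C2-format} and \eqref{eqns!G2-format}, which were written precisely to be $\Dih_6$- resp.\ $\Dih_8$-invariant, and $G$ normalises $\TT^D$ (it permutes the one-parameter subgroups $\lambda_i$ according to $\pi$), giving the asserted $G\times\TT^D$-action after passing to the quotient by the relation $G\cap\TT^D$ if necessary.

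The main obstacle is the normality/regularity-in-codimension-$1$ step: establishing primeness and the Gorenstein property is essentially formal given the GHK machinery and the explicit self-dual resolutions, and the group action is built into the construction, but controlling $\sing X$ requires genuinely getting one's hands dirty with the equations (or with a careful analysis of how the log canonical singularities of the GHK fibres spread out over the degenerate locus of $\Aa^n$). I expect that the detailed singular-locus computations carried out in \S\ref{section!C2-cluster-format}--\ref{section!G2-cluster-format} are exactly what is needed here, so in the final write-up this proof should mostly be a matter of citing those computations and assembling the Serre criterion.
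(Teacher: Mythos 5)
The paper itself offers no formal proof of this proposition: it is stated as a summary following the one-sentence observation that ``the cluster variety $X$ inherits all of the good properties of the mirror family $\sX$,'' with the decisive work (Gorensteinness from the explicit self-dual resolution, and regularity in codimension $1$) deferred to the computations of Lemmas~\ref{lemma!C2_singular_locus} and \ref{lemma!G2_singular_locus}. Your proposal is therefore supplying more detail than the authors chose to give, and in broad strokes it tracks the intended argument: prime from irreducibility, Gorenstein from the resolution, normal by Serre's $R_1+S_2$ once Gorensteinness is in hand and the singular locus is shown to have codimension $\geq 2$, and the $G\times\TT^D$ action read off the defining data. Your identification of the singular-locus computations as the genuine content is exactly right.

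That said, two of the intermediate steps would not survive scrutiny as written. First, an affine toric variety is always Cohen--Macaulay but need not be Gorenstein; indeed the paper explicitly remarks that $\sX_{\pA_2}\subset\Aa^{15}$ is Cohen--Macaulay but \emph{not} Gorenstein, so ``$B$ is an affine toric variety, hence Gorenstein'' is false and the conclusion ``$\sX$ Gorenstein'' directly contradicts the text. You recover by restricting to the regular base $\TT^n$, but then the second soft spot bites: the Zariski closure of a flat, relatively Gorenstein family over a dense open need not remain flat or relatively Gorenstein over the compactified base (already flatness can fail, e.g.\ for the closure of the hyperbola over $\Aa^1$), so ``relatively Gorenstein over a regular base forces $X$ to be Gorenstein'' does not apply to $X/\Aa^n$ without an additional argument. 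In practice the Gorenstein claim is verified directly from the explicit $9\times 16$ (resp.\ $20\times 64$) resolution, which you mention only parenthetically --- that parenthetical should be promoted to the main argument. The primeness step is also overbuilt: you invoke flatness and connectedness of fibres, but the closure of an integral subscheme is integral, full stop, so none of that is needed. Finally, on the group action, note that $G=\Dih_{2k}$ normalises but does not centralise $\TT^D$ (it permutes the characters $\chi_i$), so strictly the group acting is $G\ltimes\TT^D$; the paper's notation $G\times\TT^D$ is a slight abuse, and your remark about ``passing to the quotient by $G\cap\TT^D$'' does not fix this --- it is best left as the paper's loose shorthand.
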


%\begin{proof}
%Taking the intersection over all of the clusters $\bm{x}_i=\{x_{i_1},\ldots,x_{i_r}\}$ have that 
%\[ \sA = \bigcap_{\bm{x}_i} R[x_{i_1}^{\pm1},\ldots,x_{i_r}^{\pm1}] \]
%is an intersection of finitely many integrally closed rings. Therefore $\sA$ is integrally closed and hence $X$ is normal.

%$\sA$ is Noetherian and the localisation $\sA_{(x_{i_1},\ldots,x_{i_r})} = R[x_{i_1}^{\pm1},\ldots,x_{i_r}^{\pm1}]$ is a unique factorisation domain. Therefore $\sA$ is a unique factorisation domain, by \cite[Theorem 20.2]{Matsumura}, and hence $X$ is prime.

%There exists a positive grading $\chi$ on the cluster algebra $\sA=\bigoplus_{m\geq0} \sA_{m\chi}$ such that $\sA_0=\CC$. By the ``local implies graded'' principle \cite{??} it is enough to show that the local ring $\sA_{\frakm}$ is Gorenstein for the maximal ideal $\frakm = \bigoplus_{m>0} \sA_m$. The coefficients form a regular sequence and setting them all zero we get the Stanley--Reisner ring over the pentagon/hexagon/octagon which is Gorenstein, hence by \cite[Exercise 18.1]{Matsumura} $\sA$ and $X$ are Gorenstein.
%\end{proof}

Moreover, because of the nice structure of the equations we also have the following Lemma.

\begin{lem}\label{lem!partial-covering}
The cluster variety $X$ has a partial open covering by complete intersection affine hypersurfaces 
\[  U_i = X \cap( \theta_i \neq 0 ) \quad \text{and} \quad U_{ij} = X \cap( \theta_{ij} \neq 0 ).\]
\end{lem}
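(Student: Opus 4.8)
The plan is to show that each open set $U_i = X \cap (\theta_i \neq 0)$ (and symmetrically $U_{ij}$) becomes a hypersurface once we invert $\theta_i$, by using the exchange relations in \eqref{eqns!C2-format} (or \eqref{eqns!G2-format}) to eliminate all the cluster variables except for two \emph{adjacent} ones. Concretely, fix $\theta_i$ and pick the two cluster variables $\theta_{\mathrm{prev}}, \theta_{\mathrm{next}}$ adjacent to $\theta_i$ in the cyclic labelling of Figure~\ref{RootSystemsFig}; by the Laurent phenomenon every other cluster variable $\theta'$ is expressible as $F(\theta_{\mathrm{prev}}, \theta_{\mathrm{next}})/(\theta_{\mathrm{prev}}^{a}\theta_{\mathrm{next}}^{b})$, but more to the point, using the explicit exchange relations we can solve iteratively: the relation $\theta_{ki}\theta_{ij} = A_i\theta_i^2 + \lambda A_{jk}\theta_i + \cdots$ expresses the product of the two cluster variables adjacent to $\theta_i$ in terms of $\theta_i$ alone, and the relations $\theta_h\theta_{i} = A_{hi}\theta_{hi} + \cdots$ and $\theta_i \theta_{jk} = \cdots$ let one walk around the cycle, each step expressing a new cluster variable as a Laurent expression in $\theta_i$ and one neighbour, with only a power of $\theta_i$ in the denominator. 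Thus on $U_i$ the coordinate ring $\sA[\theta_i^{-1}]$ is generated by the coefficients $A_*$, the parameter(s) $\lambda$, $\theta_i$, and a single remaining adjacent cluster variable, say $u = \theta_{\mathrm{next}}$; all other generators are redundant after inverting $\theta_i$.

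Next I would identify the single relation that survives. After the eliminations, the relations of the format fall into two classes: those that have been \emph{used} to solve for a redundant variable (these become definitions and impose nothing further), and one leftover relation — for instance the remaining instance of $\theta_{ki}\theta_{ij} = A_i\theta_i^2 + \lambda A_{jk}\theta_i + A_jA_{jk}^2 A_k$, once $\theta_{ki}$ and $\theta_{ij}$ are both rewritten in terms of $\theta_i$ and $u$ — which becomes a single polynomial equation $g(A_*, \lambda, \theta_i, u) = 0$ in the localized ring, after clearing the power of $\theta_i$ in the denominator. Therefore $U_i \cong \Spec \bigl( \CC[A_*, \lambda, \theta_i^{\pm 1}, u]/(g)\bigr)$, which is (the localization at $\theta_i$ of) an affine hypersurface, i.e.\ a complete intersection of codimension one. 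The $U_{ij}$ case is handled identically: inverting $\theta_{ij}$, the relation $\theta_i\theta_j = A_{ij}\theta_{ij} + \cdots$ immediately makes $\theta_{ij}$ express one of $\theta_i,\theta_j$ in terms of the other, and one again walks around the cycle. One should also check that the union $\bigcup_i U_i \cup \bigcup_{ij} U_{ij}$ is indeed a \emph{partial} (not necessarily full) covering — i.e.\ it omits only the locus where all cluster variables vanish — which is immediate since these are the only open sets being asserted to exist.

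The main obstacle, and the step deserving the most care, is verifying that the elimination really does leave exactly \emph{one} functionally independent relation and that no spurious extra relations are introduced — equivalently, that the localized ring is a genuine hypersurface ring and not a further quotient. Abstractly this follows from the fact that $X$ has pure codimension $4$ (resp.\ $6$) and dimension $9$ (resp.\ $12$): after inverting $\theta_i$ we have at most $2$ non-redundant cluster variables plus the $6$ (resp.\ $8$) coefficients and $1$ (resp.\ $2$) parameters, i.e.\ at most $9$ (resp.\ $11$) ambient coordinates; wait — one must be slightly careful, because actually all six coefficients and $\lambda$ plus two adjacent cluster variables give $9$ generators, matching $\dim X = 9$ only if there is one relation, so counting dimensions pins down that exactly one relation can survive and it must cut out a hypersurface. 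For $\pG_2$ the analogous count is $8$ coefficients $+ 2$ parameters $+ 2$ cluster variables $= 12 = \dim X$, again forcing exactly one relation. So the cleanest write-up is: exhibit the explicit eliminations showing $\sA[\theta_i^{-1}]$ is generated by those $9$ (resp.\ $12$) elements, invoke $\dim X$ and normality/Cohen--Macaulayness from the earlier Proposition to conclude the ideal of relations in the localization is principal, and note the generator is (a unit times) the indicated leftover exchange relation. The parallel argument with $\theta_{ij}$ inverted completes the proof.
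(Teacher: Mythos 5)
Your overall strategy — invert $\theta_i$ (or $\theta_{ij}$), use the exchange relations whose left-hand side is a product containing $\theta_i$ to eliminate the other cluster variables, and observe that exactly one relation survives — is the right one, and it agrees with what the paper implicitly does (the paper states the lemma without proof and exhibits the resulting complete intersection equations explicitly in \S3.3 and \S4.3). However, there is a consistent counting error that makes your dimension argument internally contradictory, and it stems from a genuine misreading of which cluster variables can be eliminated.

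You claim that after inverting $\theta_i$ only \emph{one} adjacent cluster variable $u = \theta_{\mathrm{next}}$ survives, so that $\sA[\theta_i^{-1}] \cong \CC[A_*,\lambda,\theta_i^{\pm1},u]/(g)$ with $6+1+1+1 = 9$ generators for $\pC_2$. But the exchange relation at $\theta_i$, namely $\theta_{ki}\theta_{ij} = A_i\theta_i^2 + \lambda A_{jk}\theta_i + A_jA_{jk}^2A_k$, only expresses the \emph{product} of the two neighbours; with $\theta_i$ a unit you cannot solve for $\theta_{ki}$ in terms of $\theta_{ij}$ (or vice versa) without also inverting $\theta_{ij}$. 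So \emph{both} adjacent cluster variables must be retained, giving $6+1+1+2 = 10$ generators and the single relation $\theta_{ki}\theta_{ij} = \cdots$, hence dimension $10-1 = 9 = \dim X_{\pC_2}$. Your count of $9$ generators with $1$ relation would give dimension $8$, contradicting $\dim X_{\pC_2}=9$, so the sentence ``$9$ generators, matching $\dim X = 9$ only if there is one relation'' is arithmetically backwards. The same off-by-one occurs for $\pG_2$: the correct count is $8+2+1+2 = 13$ generators and one relation, not $12$. Likewise in the $U_{ij}$ case, inverting $\theta_{ij}$ does \emph{not} let the relation $\theta_i\theta_j = A_{ij}\theta_{ij} + \cdots$ express one of $\theta_i,\theta_j$ in terms of the other; it simply survives as the one leftover hypersurface equation (compare the fourth displayed equation in the paper's worked example in \S3.3, where with $\theta_{12}$ inverted one eliminates $\theta_{31},\theta_{23},\theta_3$ and is left with the relation $\theta_1\theta_2 = A_{12}\theta_{12} + A_{23}A_3A_{31}$ among $\theta_1,\theta_2,\theta_{12}^{\pm1}$ and the coefficients). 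Fixing the generator count to $\dim X + 1$ makes your argument go through, and it then matches the paper's $\CI^{(4)}$ (resp.\ $\CI^{(6)}$) description once you remember that retaining the eliminated variables as coordinates trades the hypersurface picture for a higher-codimension complete intersection picture.
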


The complement of these pieces is called the \emph{deep locus} of $X$ and breaks up into a union of subvarieties of very high codimension. See \S\ref{subsection!C2-affine-pieces} for an example.

\begin{rmk}
These open sets make it possible to check the singular loci (see \S\ref{section!quasismooth-strategy}) and compute the rank of the divisor class group (see \S\ref{sec!primality}) of regular pullbacks from $X$. 
%and also has applications to computing the Picard rank of the regular pullback. Rather than computing the minors of a large Jacobian matrix, we can check the singular loci on each of the complete intersection patches and then consider each of the strata contained in the deep locus directly. The point is that in practice the codimension of these pieces is very high, so our regular pullback is likely to miss most of them.
\end{rmk}

\subsection{The $\pA_2$ case} \label{section!A2-format}

As a warm-up we explain how this works in the $\pA_2$ case. 

\subsubsection{Equations for the mirror family $\sX_{\pA_2}$}
The equations for $\sX_{\pA_2}/B_{\pA_2}$ are worked out in \cite[Example 3.7]{ghk}. To simplify the notation we let $A_i=z^{[D_i]}$ and $B_i=z^{[E_i]}$. The base variety $B_{\pA_2}$ is a toric variety defined by 10 equations: %NB: these are triplets of edges forming a matching in the Petersen graph.
\[ A_iB_i = A_{i-2}A_{i+2} = B_{i-1}B_{i+1} \]
and there are five relative equations:
\[ \theta_{i-1}\theta_{i+1} = A_i\theta_i + A_iB_i \] 
which define $\sX_{\pA_2}$ as a scheme over $B_{A_2}$. Therefore the total space $\sX_{\pA_2}\subset \Aa^{5}_{\theta_i}\times \Aa^{10}_{A_i,B_i}$ is an affine variety of codimension 8 defined by 15 equations. This variety is Cohen--Macaulay, but not Gorenstein.

\subsubsection{The cluster variety $X_{\pA_2}$} 
To recover the Grassmannian $\Gr(2,5)$ we restrict $\sX$ to the locus $\sX|_{\TT^5}\subset\sX$ over the structure torus $\TT^5\subset B$. After writing all of the elements of $N_1(Y)$ in terms of the basis $[D_i]$, the equations become
\[ \theta_{i-1}\theta_{i+1} = A_i\theta_i + A_{i-2}A_{i+2} \] 
which we see to be ideal given by the $4\times4$ maximal Pfaffians of a $5\times 5$ skewsymmetric matrix.
\[ \Pf_4\begin{pmatrix}
A_5 & \theta_1 & \theta_2 & A_3 \\ 
 & A_2 & \theta_3 & \theta_4 \\
 & & A_4 & \theta_5 \\
 & & & A_1
\end{pmatrix} \]
Taking the closure of $\sX|_{\TT^5}$ over $\TT^5\subset\Aa^5_{A_i}$ gives the cluster variety $X_{\pA_2}$. Indeed, we see that $X_{\pA_2}$ is nothing other than the affine cone over the Grassmannian $\Gr(2,5)$ in its Pl\"ucker embedding.

\subsubsection{Symmetries} 
$X_{\pA_2}$ has the action of the group $\Dih_{10}\times \TT^D$, where $\Dih_{10}$ permutes the indices $\{1,\ldots,5\}$. The characters for the $\TT^D$-action are given by $\chi_i\left(z^{C}\right)=D_i\cdot C$, as shown in Table~\ref{table!weight_table_for_A2}.
\begin{table}[h]
\caption{The character table for $\TT^D \curvearrowright X_{\pA_2}$.}
\begin{center}
$\begin{array}{|c|ccccc|ccccc|}\hline
 & \theta_1 & \theta_2 & \theta_3 & \theta_4 & \theta_5 & A_1 & A_2 & A_3 & A_4 & A_5 \\\hline
\chi_1 & 1 & 0 & 0 & 0 & 0 &  -1 & 1 & 0 & 0 & 1 \\
\chi_2 & 0 & 1 & 0 & 0 & 0 &   1 &-1 & 1 & 0 & 0 \\
\chi_3 & 0 & 0 & 1 & 0 & 0 &   0 & 1 &-1 & 1 & 0 \\
\chi_4 & 0 & 0 & 0 & 1 & 0 &   0 & 0 & 1 &-1 & 1 \\
\chi_5 & 0 & 0 & 0 & 0 & 1 &   1 & 0 & 0 & 1 &-1 \\ \hline
\end{array}$
\end{center}
\label{table!weight_table_for_A2}
\end{table}%

\subsection{Cluster varieties as key varieties}\label{section!cluster-formats}
Suppose that $X=\Spec\sA\subset \Aa^n$ is an affine cluster variety with torus action $\TT^k\times X\to X$. Define the character lattice $M=\Hom(\TT^k,\TT)\cong\ZZ^k$ and the dual lattice of one parameter subgroups $M^\vee=\Hom(\TT,\TT^k)$, together with the perfect pairing ${\left<{}\cdot{},{}\cdot{}\right>}\colon M\times M^\vee\to\ZZ$. The following objects are all endowed with an $M$-grading induced by the torus action: the coordinate ring $\sA=\bigoplus_{\chi\in M}\sA_\chi$, the ambient ring $\sO_{\Aa^n}$, the minimal free resolution $\FF$ of $\sA$ as an $\sO_{\Aa^n}$-module and the Hilbert series of $X$
\[ P_X(t_1,\ldots,t_k)=\sum_{\chi\in M}\dim(\sA_{\chi})t_1^{\chi_1}\cdots t_k^{\chi_k}.\] 
Following the definition of a Gorenstein format by Brown, Kasprzyk \& Zhu \cite{bkz}, we make the following definition:

\begin{defn}\label{definition!cluster-format}
A \emph{cluster format} is a triple $(X,\mu,\FF)$ where $X\subset \Aa^n$ is a cluster variety, $\mu$ is the character of an action $\TT \curvearrowright X$ and $\FF$ is a $\ZZ$-graded resolution of $\sA$ as an $\sO_{\Aa^n}$-module. If $X$ is the cluster variety of finite type $\mathsf{T}$ we also call this \emph{$\mathsf{T}$ format}.
\end{defn}

%\StephenComment{changed $\theta$ for $\mu$} 
In this setup, a cluster format is determined by the choice of cluster variety $X$ and a one parameter subgroup $\rho \in M^\vee$. For such a $\rho$, the action of $\lambda\in\TT$ on $v_\chi \in \sA_{\chi}$ is given by $\lambda\cdot v_\chi\mapsto \lambda^{\langle\rho,\chi\rangle}v_\chi$, and extended to all of $\sA$ linearly. The \emph{degree} of $v_\chi$ is this exponent, denoted $d(v_\chi):=\langle\rho,\chi\rangle$. Thus $\rho$ induces a $\ZZ$-grading on $\sA=\bigoplus_{d\in\ZZ} \sA_d$, where $\sA_d=\bigoplus_{\{\chi\in M : \left<\chi,\rho\right>=d\}}\sA_{\chi}$. The polynomial ring $\sO_{\Aa^n}$ is $\ZZ$-graded in a similar way,
%(each coordinate $x_i$ is homogeneous of degree $d(x_i)$),
which determines the character $\mu=\langle\rho,\cdot\rangle$ of the $\TT$-action and a $\ZZ$-grading on $\FF$.

Fix a cluster format $(X,\mu,\FF)$ of codimension $c$ and consider the polynomial ring $\sO_{\Aa^m}=\CC[y_1,\dots,y_m]$ with a (positive) $\ZZ$-grading $(a_1,\dots,a_m)$. Let $\phi\colon \Aa^m\to\Aa^n$ be a homogeneous morphism of degree zero with respect to the given grading on $\sO_{\Aa^m}$ and the $\mu$-grading on $\sO_{\Aa^n}$ (i.e.\ $\phi$ is $\TT$-equivariant).

\begin{propdefn}[cf.~{\cite[Proposition 1.3]{Fun}}]
Let $\widehat Y=\phi^{-1}(X)\subset\Aa^m$ for a morphism $\phi\colon \Aa^m\to\Aa^n$ homogeneous of degree zero, as above. Then $\widehat Y$ is called a \emph{regular pullback} of $X$ if one of the following equivalent conditions hold:
\begin{enumerate}
\item $\widehat Y\subset\Aa^m$ has codimension $c$;
\item the pullback of $\FF$ by $\phi$ is a free resolution of $\sO_{\Aa^m}$-modules;
\item if $x_i$ are coordinates on $\Aa^n$, then $x_i-\phi^*(x_i)$ form a regular sequence on $\Aa^m\times\Aa^n$ for $i=1,\dots,n$.
\end{enumerate}
\end{propdefn}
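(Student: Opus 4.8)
The plan is to reduce the statement to the classical graph (reduction-to-the-diagonal) construction and standard commutative algebra. Since $X$ is Gorenstein it is Cohen--Macaulay, so $\sA=\sO_{\Aa^n}/I$ is a perfect $\sO_{\Aa^n}$-module of grade $c=\codim X$ and the resolution $\FF$ of the format has length exactly $c$. Work in the product $\Aa^m\times\Aa^n$, with coordinate ring $R=\sO_{\Aa^m}\otimes_\CC\sO_{\Aa^n}$ and the two projections $\mathrm{pr}_1,\mathrm{pr}_2$. Put $\widetilde I=I\cdot R$, so $R/\widetilde I=\sO_{\Aa^m\times X}$ is Cohen--Macaulay of codimension $c$ in $R$ and, by flat base change along $\mathrm{pr}_2$, the pullback $\mathrm{pr}_2^{*}\FF$ is a free $R$-resolution of $R/\widetilde I$. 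Let $\Gamma_\phi\subset\Aa^m\times\Aa^n$ be the graph of $\phi$, the vanishing locus of $J=(x_1-\phi^{*}x_1,\dots,x_n-\phi^{*}x_n)$; since $y_1,\dots,y_m,x_1-\phi^{*}x_1,\dots,x_n-\phi^{*}x_n$ is a polynomial coordinate system on $R$, the $n$ generators of $J$ form an $R$-regular sequence, $R/J\cong\sO_{\Aa^m}$, and $\mathrm{pr}_1$ identifies $\Gamma_\phi$ with $\Aa^m$ carrying $\widetilde I\cdot\sO_{\Gamma_\phi}$ to $\phi^{*}I$. Hence $\widehat Y=\Gamma_\phi\cap(\Aa^m\times X)$ and $\sO_{\widehat Y}\cong R/(\widetilde I+J)$ as $\sO_{\Aa^m}$-algebras. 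I read condition (3) as the assertion that the $x_i-\phi^{*}x_i$ form a regular sequence on the module $R/\widetilde I=\sO_{\Aa^m\times X}$ (on $R$ itself they always do); when invoking characterisations of regular sequences I localise at a point of $\widehat Y\subset\Gamma_\phi$, where these elements lie in the maximal ideal.

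For $(1)\Leftrightarrow(3)$ I would count dimensions. The Cohen--Macaulay ring $R/\widetilde I$ has dimension $m+n-c$, and cutting it by the $n$ elements $\underline t=(x_i-\phi^{*}x_i)$ yields $\sO_{\widehat Y}=R/(\widetilde I+J)$, of dimension $\ge m-c$, with equality if and only if $\underline t$ is a regular sequence on $R/\widetilde I$ — the familiar fact that for a Cohen--Macaulay module a length-$n$ sequence of elements of the maximal ideal is regular exactly when it is part of a system of parameters, i.e.\ when it drops the dimension by $n$. Since $\dim\sO_{\widehat Y}=m-\codim\widehat Y$, the equality $\dim\sO_{\widehat Y}=m-c$ is precisely $\codim\widehat Y=c$, so (1) and (3) are equivalent.

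For $(2)\Leftrightarrow(3)$ I would compute the homology of the pulled-back complex. As $R$ is flat over $\sO_{\Aa^n}$ and $\sO_{\Aa^m}\cong R/J$ there, one has $H_i(\phi^{*}\FF)=\mathrm{Tor}^{\sO_{\Aa^n}}_i(\sA,\sO_{\Aa^m})=\mathrm{Tor}^{R}_i(R/\widetilde I,R/J)$ for every $i$; and since $J$ is generated by the $R$-regular sequence $\underline t$, its Koszul complex resolves $R/J$, so $\mathrm{Tor}^{R}_i(R/\widetilde I,R/J)=H_i\big(K(\underline t;\,R/\widetilde I)\big)$, the Koszul homology of $\underline t$ on $R/\widetilde I$. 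Condition (2) says exactly that all of these vanish for $i\ge1$, and by the standard criterion (vanishing of positive Koszul homology $\Leftrightarrow$ regular sequence, valid after the local reduction above) this holds if and only if $\underline t$ is $R/\widetilde I$-regular, which is (3). In particular, when (3) holds, $\phi^{*}\FF=\mathrm{pr}_2^{*}\FF\otimes_R R/J$ is genuinely a free resolution of $(R/\widetilde I)\otimes_R(R/J)=\sO_{\widehat Y}$.

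The step I expect to be the real work is the converse direction of $(2)\Leftrightarrow(3)$ (equivalently $(2)\Rightarrow(1)$): the forward implications use only a dimension count and the elementary Koszul/$\mathrm{Tor}$ computation, whereas extracting a depth statement — that $\underline t$ is a \emph{regular} sequence — from the mere acyclicity of a length-$c$ complex of free modules needs the genuine homological input, namely the Koszul-homology characterisation of regular sequences (or, equivalently, the Buchsbaum--Eisenbud acyclicity criterion). The associated subtlety is that the $\mu$-grading on $\sO_{\Aa^n}$ need not be positive, so one cannot simply work at the irrelevant ideal of $\Aa^m\times\Aa^n$; instead I localise at the (positively weighted) points of $\widehat Y$ inside $\Gamma_\phi$, where the $x_i-\phi^{*}x_i$ lie in the maximal ideal, which is exactly the hypothesis the Koszul criterion requires. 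The remaining items — that $\mathrm{pr}_2^{*}\FF$ resolves $\sO_{\Aa^m\times X}$, and that $\mathrm{pr}_2^{*}\FF\otimes_R R/J$ is the pullback complex $\phi^{*}\FF$ — are routine flat-base-change and diagram chasing.
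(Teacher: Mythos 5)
The paper does not actually prove this Proposition--Definition: it is stated with a citation to Reid's unpublished notes \emph{Fun in codimension 4}, so there is no in-paper argument to compare against. Your architecture --- the graph $\Gamma_\phi\subset\Aa^m\times\Aa^n$, the flat-base-change identification
\[
H_i(\phi^*\FF)\;\cong\;\mathrm{Tor}^{\sO_{\Aa^n}}_i(\sA,\sO_{\Aa^m})\;\cong\;\mathrm{Tor}^{R}_i\big(R/\widetilde I,\,R/J\big)\;\cong\;H_i\big(K_\bullet(\underline t;\,R/\widetilde I)\big),
\]
and the Cohen--Macaulay dimension count --- is exactly the standard way to prove such a statement, and you correctly diagnose the misprint in condition (3): as written (``on $\Aa^m\times\Aa^n$'') it is vacuously true, and it must be read as a condition on the module $R/\widetilde I=\sO_{\Aa^m\times X}$.

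The one step that is not quite closed is $(1)\Rightarrow(3)$ and $(2)\Rightarrow(3)$, where you localise at a point of $\widehat Y$ and then invoke the local Cohen--Macaulay criterion (``part of a system of parameters iff regular'') or the local Koszul criterion. Condition (3) is a global assertion about $R/\widetilde I$, and regularity of $\underline t$ after localisation --- even at every prime containing $\widetilde I+J$ --- does not formally give regularity on $R/\widetilde I$ itself, because the associated primes of the intermediate quotients $(R/\widetilde I)/(t_1,\dots,t_i)$ need not lie over $\widetilde I+J$. The missing ingredient is the classical theorem (Kaplansky, \emph{Commutative Rings}, Thm.\ 125; also Northcott): over a Noetherian ring, if $I=(t_1,\dots,t_n)$ satisfies $IM\ne M$ and $\mathrm{grade}(I,M)=n$, then $t_1,\dots,t_n$ is an $M$-regular sequence, in any order. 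From (1) one deduces $\mathrm{ht}_{R/\widetilde I}(\underline t)=n$, since Krull's height bound together with equidimensionality and catenarity of $\Aa^m\times X$ force every component of $\widehat Y$ to have codimension exactly $c$; then $\mathrm{grade}=\mathrm{height}$ because $R/\widetilde I$ is Cohen--Macaulay. From (2) one gets $\mathrm{grade}(\underline t,R/\widetilde I)=n$ directly from the grade-sensitivity of the Koszul complex (Bruns--Herzog 1.6.17), with no Cohen--Macaulay hypothesis needed. With Kaplansky's theorem inserted at this point, the local-to-global issue evaporates and the rest of your argument is correct and complete.
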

The point of the definition is that all of the equations, the syzygies, the 
Hilbert series etc., of $\widehat Y$ come from the cluster format $(X,\mu,\FF)$ 
together with the morphism $\phi$. Since $\phi$ is $\TT$-equivariant, we may 
define the weighted projective variety associated to $(X,\mu,\FF)$ and $\phi$ by 
taking the GIT quotient
\[ Y = \left( \widehat{Y} \: {/\!\!/}_{\!\mu} \: \TT \right) \subset\PP(a_1,\dots,a_m).\]
See Examples \ref{eg!first-C2} and \ref{eg!first-G2} for details.

\begin{rmk} \leavevmode
\begin{enumerate}
\item The character $\mu$ is allowed to have non-positive weights, since if $x_i$ is a coordinate with $d(x_i)<0$ then $\phi^*(x_i)=0$. 
\item By considering larger torus actions $\TT^d$ for $d\leq k$, we could also use $X$ as a key variety for the Cox ring of some VGIT quotient, e.g.~to construct 3-fold flips as with Brown \& Reid's diptych varieties \cite{dip2}, divisorial extractions as in \cite{phd}, or Sarkisov links in the style of Brown \& Zucconi \cite{bz}.
\end{enumerate}
In this paper we consider the first generalisation. Our convention is always to assume that $\phi$ is a generic morphism, and therefore that $\phi^*(v)\neq0$ is a non-zero constant if $d(v)=0$.
\end{rmk}

%Example?. Free resolution of $\pC_2$ graded by $M$?\StephenComment{Or shift example to later?}

We end this section with a useful lemma.

\begin{lem}[Singularity avoidance lemma]\label{lemma!singularity-avoidance}
Let $\widehat{Y}=\phi^{-1}(X)$ be a regular pullback where $\phi\colon \Aa^m \to \Aa^n$ is a morphism of graded degree zero.
\begin{enumerate}
\item $\phi^{-1} \big(\sing(X) \big) \subseteq \sing\big(\widehat Y\big)$
\item Let $\Pi=V(f_1,\ldots,f_c)\subset \Aa^n$ be a homogeneous complete intersection of codimension $c\leq m$. Then either: 
\begin{enumerate}
\item $\phi^{-1}(\Pi) = \emptyset$, which happens if and only if $d(f_i)=0$ and $\phi^*(f_i)\neq0$ for some $i$, or
\item $\dim\phi^{-1}(\Pi)\geq m-c$. 
\end{enumerate}
\item If $\widehat Y$ is the affine cone over a quasismooth weighted projective variety $Y$, then $\phi^{-1} \big( \sing(X) \big)$ is at worst the cone point $P\in \widehat Y$. Moreover if $\Pi\subseteq \sing(X)$ is a homogeneous complete intersection in $\Aa^n$ of codimension $< m$, then $d(f)=0$ and $\phi^*(f)\neq0$ for some generator $f\in I(\Pi)$.
\end{enumerate}
\end{lem}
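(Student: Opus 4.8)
The plan is to prove the three parts in order, each reducing to a statement about regular pullbacks combined with the dimension bound from part (2).

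\textbf{Part (1).} I would argue directly from the definition of a regular pullback. Since $\widehat{Y}=\phi^{-1}(X)$ has codimension $c$ in $\Aa^m$, and the defining ideal of $\widehat{Y}$ is generated by $\phi^*$ applied to the $c$-codimensional ideal of $X$, the Jacobian criterion is compatible with pullback: at a point $p\in\widehat{Y}$ the differential $d\phi_p$ carries the normal space of $\widehat{Y}$ at $p$ onto (a subspace of) the pullback of the normal space of $X$ at $\phi(p)$. If $\phi(p)\in\sing(X)$, then the Jacobian matrix of the $X$-equations drops rank at $\phi(p)$; composing with $d\phi_p$, the Jacobian of the $\widehat{Y}$-equations drops rank at $p$. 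More precisely, since $\widehat{Y}$ has the expected codimension $c$, it is smooth at $p$ iff the $c$ pulled-back equations have independent differentials at $p$, which forces the original $c$ equations to have independent differentials at $\phi(p)$, i.e.\ $\phi(p)\notin\sing(X)$. Contrapositive gives the claim.

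\textbf{Part (2).} The locus $\phi^{-1}(\Pi)$ is cut out inside $\Aa^m$ by the $c$ functions $\phi^*(f_1),\ldots,\phi^*(f_c)$. Each $f_i$ is homogeneous for the $\mu$-grading, so $\phi^*(f_i)$ is homogeneous of the same degree $d(f_i)$ for the grading on $\sO_{\Aa^m}$. If some $d(f_i)=0$ then, by the generic-morphism convention, $\phi^*(f_i)$ is a nonzero constant, so the vanishing locus is empty; and conversely if all $d(f_i)>0$ or $\phi^*(f_i)=0$ then $\phi^{-1}(\Pi)$ contains the origin and hence is nonempty. When nonempty, $\phi^{-1}(\Pi)$ is defined by $c\leq m$ equations in $\Aa^m$, so every component has dimension $\geq m-c$ by Krull's principal ideal theorem. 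This is the routine step.

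\textbf{Part (3).} This is where the real content is, and I expect it to be the main obstacle: combining (1) and (2) with quasismoothness. Suppose $\widehat{Y}$ is the affine cone over a quasismooth $Y\subset\PP(a_1,\ldots,a_m)$, so $\sing(\widehat{Y})\subseteq\{P\}$, the cone point. By part (1), $\phi^{-1}(\sing(X))\subseteq\sing(\widehat{Y})\subseteq\{P\}$, giving the first assertion. For the second: let $\Pi=V(f_1,\ldots,f_r)\subseteq\sing(X)$ be a homogeneous complete intersection with $r<m$. Then $\phi^{-1}(\Pi)\subseteq\phi^{-1}(\sing(X))\subseteq\{P\}$, so $\dim\phi^{-1}(\Pi)\leq 0$. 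By part (2), either $\phi^{-1}(\Pi)=\emptyset$ — impossible, since $P\in\phi^{-1}(\sing(X))\subseteq\widehat{Y}$ would need checking, but note $\phi(P)$ is the origin of $\Aa^n$, which lies in $\sing(X)$ (as $X$ is a cone over a positive-dimensional variety and the origin is its most singular point), hence $P\in\phi^{-1}(\sing(X))$, so $\phi^{-1}(\Pi)\ni P$ provided the origin lies in $\Pi$; and indeed any homogeneous complete intersection through which $\sing(X)$ passes contains the origin — or $\dim\phi^{-1}(\Pi)\geq m-r>0$, contradicting $\dim\leq 0$. Hence we must be in case (2a) with empty preimage, i.e.\ $d(f_i)=0$ and $\phi^*(f_i)\neq 0$ for some $i$. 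The delicate points to get right are: (i) verifying that the origin of $\Aa^n$ genuinely lies in $\sing(X)$, which follows because $X$ is Gorenstein of positive dimension $\geq 2$ and affine, so the vertex of the cone is singular — here one uses that $\sing(X)\supseteq\{\text{origin}\}$ unless $X$ is smooth, which it is not (the explicit equations have a singular vertex); and (ii) ensuring $\Pi$ passes through the origin, which it does since $\sing(X)\subseteq\Pi$ and $\sing(X)$ contains the origin. Once these are pinned down, the argument is a clean combination of (1) and (2). I would write it so that the case distinction from part (2) does all the work, emphasising that $r<m$ is exactly what makes $m-r\geq 1$ incompatible with $\phi^{-1}(\Pi)\subseteq\{P\}$.
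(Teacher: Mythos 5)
Your handling of Parts (1) and (2) tracks the paper's: (1) is the chain rule $\Jac(\widehat Y)=\phi^*(\Jac(X))\cdot\Jac(\phi)$, so the rank drops at $p$ whenever it drops at $\phi(p)$; (2) is Krull's principal ideal theorem plus the observation that a homogeneous $\phi^*(f_i)$ vanishes at the origin unless $d(f_i)=0$. (The phrase ``the $c$ pulled-back equations have independent differentials'' in your (1) is slightly loose --- $X$ is usually cut out by $d>c$ equations and the precise criterion concerns the rank of the full Jacobian matrix --- but the underlying idea is right.)

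Part (3) contains a genuine error. The skeleton you want is fine: $\phi^{-1}(\Pi)\subseteq\phi^{-1}(\sing(X))\subseteq\{P\}$, so $\dim\phi^{-1}(\Pi)\le 0$; case (2b) is ruled out since $m-c\ge 1$, leaving case (2a), which gives $d(f_i)=0$ and $\phi^*(f_i)\ne 0$ for some $i$. But the long aside in which you argue that the empty case ``is impossible'' --- and then nonetheless conclude it holds --- is self-contradictory, and it rests on the false assertion that ``$\phi(P)$ is the origin of $\Aa^n$.'' It is not, and its failure is precisely the mechanism the lemma exploits: $\phi$ is graded of degree zero, so a coordinate $x_i$ with $d(x_i)=0$ pulls back to a nonzero constant (the paper's genericity convention), and $\phi(P)$ can therefore lie outside $\Pi$ even though $\Pi$ is a nonempty cone through the origin of $\Aa^n$. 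That is exactly why $\phi^{-1}(\Pi)=\emptyset$ is possible. Your subsequent claim that ``any homogeneous complete intersection through which $\sing(X)$ passes contains the origin'' also reverses the given containment --- you have $\Pi\subseteq\sing(X)$, not $\sing(X)\subseteq\Pi$. The ``delicate points'' (i) and (ii) you flag at the end are red herrings born of this same confusion; strip them out and the two-line dichotomy above is all that remains, which is what the paper means by ``(3) follows directly from (1) and (2).''
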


\begin{proof}
Suppose $X$ is defined by equations $g_1,\ldots,g_d$ in variables $x_1,\ldots,x_n$ and $\widehat Y$ is defined by equations $h_1,\ldots,h_d$ in variables $y_1,\ldots,y_m$, where $h_i(y_1,\ldots,y_n) = g_i(\phi^*(x_1), \ldots, \phi^*(x_n))$ for all~$i$. Now, by the chain rule for differentation, we have
\[ \Jac(\widehat Y) = \left(\frac{\partial h_i}{\partial y_j}\right) = \phi^*\left(\frac{\partial g_i}{\partial x_k}\right)\cdot\left(\frac{\partial\phi^*(x_k)}{\partial y_j}\right) = \phi^*(\Jac(X))\cdot \Jac(\phi) \]
and when the rank of $\Jac(X)$ is less than $c$ then the rank of $\Jac(\widehat Y)$ must be less than $c$. This proves statement (1).
%\StephenComment{I swapped the order of chain rule here, is it correct now?}

Statement (2) follows from $\phi^{-1}(\Pi) = V\left(\phi^*(f_1),\ldots,\phi^*(f_c)\right) \subset \Aa^n$, which is an intersection of $c$ homogeneous polynomials in $\Aa^m$. These define a locus of dimension $\geq m-c$, unless one $\phi^*(f_i)$ is identically nonzero. This can only happen if $d(f_i)=0$ and $\phi^*(f_i)\neq0$. %\StephenComment{Need to use the regular sequence condition of regular pullback, otherwise can find examples of homogeneous morphisms with jumping dimension. EXAMPLE? Regular pullback should be closely related to ``generic immersion''} \TomComment{I don't quite follow, since the statement of the lemma allows the dimension to jump. I think the regular pullback statement is probably required in the assertion that $\phi^{-1}(\Pi) = V\left(\phi^*(f_1),\ldots,\phi^*(f_k)\right) \subset \Aa^n$.}

Statement (3) follows directly from (1) and (2).
\end{proof}

\begin{rmk}
In our situation, $\phi\colon\Aa^m\to\Aa^n$ is usually a generic immersion. One might ask whether $\phi^{-1}\big(\sing(X)\big)$ being empty implies that $\sing\big(\widehat Y\big)$ is empty. This is not true; the rank of $\Jac(\widehat Y)$ may drop if the image of $\Jac(\phi)$ intersects too much of the kernel of $\phi^*(\Jac(X))$.
See \S\ref{sec!failure} for examples.
\end{rmk}

\begin{rmk}
Our codimension 4 cluster formats determine certain loci inside $\text{SpH}_8$, the Spin-Hom variety introduced by Reid in \cite{Reid-codim-4}. The main theorem of \cite{Reid-codim-4} puts codimension 4 Gorenstein ideals $I$ into correspondence with regular pullbacks by suitable morphisms $\varphi\colon\Aa^n\to\text{SpH}_k$, thus $\text{SpH}_k$ acts as a key variety for the $(k+1)\times 2k$ first syzygy matrix of $I$. We specify a grading on $\Mor(\Aa^8,\text{SpH}_8)$ and only consider those morphisms landing in the cluster locus. We classify the components of this space which correspond to quasismooth varieties. This is a tractable case of a question raised in \cite[\S4.8]{Reid-codim-4}.
\end{rmk}

%\TomComment{In previous notes we said to include something here about $\Mor(\Aa^8,X_{\pC_2})$ being reducible?}

\section{$\pC_2$ cluster format} \label{section!C2-cluster-format}

Recall that the cluster variety $X_{\pC_2}\subset \Aa^{13}$ is the affine Gorenstein $9$-fold in codimension $4$ described in \S\ref{subsubsection!C2-format}. We now describe how to derive the equations \eqref{eqns!C2-format} defining $X_{\pC_2}$ from the mirror family $\sX_{\pC_2}$. Throughout the whole of this section we consider subscripts $(i,j,k)$ in all formulae to vary over all of the $\Dih_6$-permutations of $(1,2,3)$.

\subsection{The equations for $\pC_2$ format} \label{section!C2-format}

Recall that the mirror family $\sX_{\pC_2}$ is defined over a toric basic variety  $B_{\pC_2}=\Spec\left(\CC[\NE(Y_{\pC_2})]\right)$.

\subsubsection{The toric base $B_{\pC_2}$}
In this case $B_{\pC_2}$ is a singular affine toric variety with 18 equations of the form $z^X=z^Y$, where $X=Y$ is a linear relation in $N_1(Y_{\pC_2})$ for some classes $X,Y\in\NE(Y_{\pC_2})$. We only write down six of these 18 equations, which will be relevant to the following calculation:
\begin{equation} \label{eqns!C2-toric-base}
\begin{aligned} \relax
[D_i]+2[E_i]+[F] &= [D_j]+2[D_{jk}]+[D_k], \\ \relax
[D_{ij}]+[E_{ij}] &= [D_{jk}]+[D_k]+[D_{ki}]. 
\end{aligned}\end{equation}
%\[\begin{array}{ccccc} \relax
%[E_{ij}]+[D_{ij}] &=& [E_i]+[E_j]+[F] &=& [D_{jk}]+[D_k]+[D_{ki}] \\ \relax
%[E_{ki}]+[E_{ij}] &=& [D_i]+2[E_i]+[F] &=& [D_j]+2[D_{jk}]+[D_k] \\ \relax
%[E_i]+[E_{jk}] &=& [E_k]+[D_k]+[D_{ki}] &=& [D_{ij}]+[D_j]+[E_j] 
%\end{array}\]
%To simplify the notation we let $A_i=z^{[D_i]}$, $A_{ij}=z^{[D_{ij}]}$, $B_i=z^{[E_i]}$, $B_{ij}=z^{[E_{ij}]}$ and $C=z^{[F]}$.

\subsubsection{The mirror family $\sX_{\pC_2}$}
In this case, the mirror family $\sX_{\pC_2}/B_{\pC_2}$ is defined by nine relative equations. These nine equations are determined by the six tag equations:
\begin{align*}
\theta_i\theta_j &= z^{D_{ij}}\left(\theta_{ij} + z^{E_{ij}}\right) \\
\theta_{ij}\theta_{jk} &= z^{D_j}\left(\theta_j + z^{E_j}\right)\left(\theta_j + z^{E_j+F}\right) 
%\theta_i\theta_{jk} &= A_{ij}A_j\theta_j + B_iB_{jk} + A_kA_{ki}\theta_k
\end{align*}
where the monomials appearing in the righthand side of these equations come from counting certain classes of rational curves on $Y_{\pC_2}$. In general, the expectation that the coefficients appearing in the mirror algebra can be interpreted in terms of the enumerative geometry of $Y$ is described in \cite{gs}.  %, as depicted in Figure~\ref{fig!C2-tag-equations}. 

In the case above, the first tag equation is of the form $\theta_i\theta_j=\sum_{m=1}^2z^{[\Sigma_m]}\theta_{ij}^{-D_{ij}\cdot\Sigma_m}$, where $[\Sigma_1]=[D_{ij}]$ and $[\Sigma_2]=[D_{ij}]+[E_{ij}]$ are the two classes of an effective rational curve $\Sigma\subset Y_{\pC_2}$ such that $\Sigma \cdot D_i=\Sigma \cdot D_j =1$, and $\Sigma\cdot D'=0$ for all other irreducible components in the boundary $D'\subset D$. Similarly the second tag equation is $\theta_{ij}\theta_{jk}=\sum_{m=1}^4z^{[\Sigma_m]}\theta_j^{-D_j\cdot\Sigma_m}$, where $[\Sigma_1]=[D_j]$, $[\Sigma_2]=[D_j]+[E_j]$, $[\Sigma_3]=[D_j]+[E_j]+[F]$ and $[\Sigma_4]=[D_j]+2[E_j]+[F]$ are the four classes of an effective rational curve $\Sigma\subset Y_{\pC_2}$ such that $\Sigma \cdot D_{ij}=\Sigma \cdot D_{jk} =1$, and $\Sigma\cdot D'=0$ for all other irreducible components in the boundary $D'\subset D$.
The remaining equations, which are of the form $\theta_i\theta_{jk}=\cdots$, can 
either be obtained by a similar calculation (i.e.\ finding the relevant classes 
of rational curves passing between $D_i$ and $D_{jk}$), or by simply calculating 
the relation which is implied birationally from the tag equations.\footnote{In 
much the same way that the tag equations of a toric variety determine all of 
the other equations.}

\begin{rmk} \label{rmk!no-proper-proof}
Since we are primarily concerned with the existence of $X_{\pC_2}$ we do 
not take the time to give a rigorous proof of this description. To do that one 
would either have to calculate the relevant Gromov--Witten invariants for $Y$ or 
(similarly to \cite[Example 3.7]{ghk}) show that there is a consistent 
scattering diagram with six rays, corresponding to the six cluster variables, 
with the attached functions $z^{D_{ij}}\left(1 + 
z^{E_{ij}}\theta_{ij}^{-1}\right)$ and $z^{D_j}\left(1 + 
z^{E_j}\theta_j^{-1}\right)\left(1 + z^{E_j+F}\theta_j^{-1}\right)$.
\end{rmk}

\subsubsection{The cluster variety $X_{\pC_2}$} 
We write $N_1(Y_{\pC_2})=\bm{D} \oplus \ZZ[\delta]$, according to the $\QQ$-basis $[D_1],\ldots,[D_{31}],\delta$, where 
\[ \delta=\tfrac12\left([D_1]+[D_2]+[D_3]-[F]\right) = \pi_{\pC_2}^*H - D_{12} - D_{23} - D_{31} \]
where $\pi_{\pC_2}$ is as in \S\ref{sect!toric-models} and $H$ is the hyperplane class on $\PP^2$.
The reason for this choice of basis is that, by the equations for $B_{\pC_2}$ \eqref{eqns!C2-toric-base}, we have:
\begin{align*}
[E_i] & = [D_{jk}] - [D_i] + \delta, \\
[E_{ij}] &= [D_{jk}] + [D_k] + [D_{ki}] - [D_{ij}],
\end{align*}
which allows us to eliminate the coefficients $z^{E_i}$, $z^{E_{ij}}$ in a 
$\Dih_6$-invariant way. After doing this, and setting $A_i:=z^{D_i}$, 
$A_{ij}:=z^{D_{ij}}$ and $\lambda:=z^{\delta}(1+z^F)$ to simplify the notation, 
we arrive at our desired equations \eqref{eqns!C2-format}, albeit defined over 
the torus $\TT^7_{A_1,\ldots,A_{31},\lambda}$. Since all the exponents that 
appear in the equations are positive and integral, the equations defining 
${\sX_{\pC_2}}|_{\TT^7}$ immediately extend to obtain the cluster variety 
$X_{\pC_2}/\Aa^7$.

\subsubsection{Symmetries} 
The cluster variety $X_{\pC_2}$ has the action of $\Dih_6\times\TT^6$, where $\Dih_6$ permutes the indices $\{1,2,3\}$. The torus action $\TT^6=\TT^D \curvearrowright X_{\pC_2}$ is determined by the six characters $\chi_i$, $\chi_{ij}$, as defined in \S\ref{sect!curlyX}. Since $\delta\cdot[D_i]=-1$ for all $i$ and $\delta\cdot[D_{ij}]=1$ for all $i,j$, the character table for $\TT^D \curvearrowright X_{\pC_2}$ is given by Table~\ref{table!weight_table_for_C2}.
\begin{table}[h]
\caption{The character table for $\TT^D\curvearrowright X_{\pC_2}$.}
\begin{center}
$\begin{array}{|c|cccccc|ccccccc|}\hline
 & \theta_1 & \theta_{12} & \theta_2 & \theta_{23} & \theta_3 & \theta_{31} & A_1 & A_{12} & A_2 & A_{23} & A_3 & A_{31} & \lambda \\\hline
\chi_1    &  1 & 0 & 0 & 0 & 0 & 0 &-2 & 1 & 0 & 0 & 0 & 1 &-1 \\
\chi_{12} &  0 & 1 & 0 & 0 & 0 & 0 &1 &-1 & 1 & 0 & 0 & 0 & 1 \\
\chi_2    &  0 & 0 & 1 & 0 & 0 & 0 &0 & 1 &-2 & 1 & 0 & 0 &-1 \\
\chi_{23} &  0 & 0 & 0 & 1 & 0 & 0 &0 & 0 & 1 &-1 & 1 & 0 & 1 \\
\chi_3    &  0 & 0 & 0 & 0 & 1 & 0 &0 & 0 & 0 & 1 &-2 & 1 &-1 \\
\chi_{31} &  0 & 0 & 0 & 0 & 0 & 1 &1 & 0 & 0 & 0 & 1 &-1 & 1 \\\hline
\end{array}$
\end{center}
\label{table!weight_table_for_C2}
\end{table}%

\subsection{Alternative presentations for $\pC_2$ format}

The nine equations~\eqref{eqns!C2-format} can be presented in a number of different ways.

\subsubsection{Crazy Pfaffian format}\label{section!crazy}
The equations can be written in a $6\times 6$ \emph{crazy Pfaffian} format:
\[ \renewcommand*{\arraystretch}{1.5}
\Pf_4\begin{pmatrix}
A_3A_{31} & \theta_1 & \theta_{12} & A_2\theta_2 + \lambda A_{31} & A_2A_{23}A_3 + \lambda \theta_1 \\
 & A_{12} & \theta_2 & \theta_{23} & A_3\theta_3 + \lambda A_{12} \\
 & & A_{23} & \theta_3 & \theta_{31} \\
 & & & A_{31}A_1 &  A_1\theta_1 \\
 & & & & A_1A_{12}A_2
\end{pmatrix} \]
where the variables $A_1,A_2,A_3$ are \emph{floating factors}. In other words, after expanding these Pfaffians some of the relations are found to be divisible by $A_1$, $A_2$ or $A_3$. In crazy Pfaffians format we allow ourselves to divide by these floating factors wherever possible. In particular if we set $A_1=A_2=A_3=1$ and $\lambda=0$ then we recover the codimension 4 \emph{extrasymmetric format} which first appeared in Dicks' thesis \cite{dicks}, and now in many other places.

\subsubsection{Triple unprojection structure}\label{subsection!triple-unprojection}
Eliminating $\theta_{12},\theta_{23},\theta_{31}$ from $\sA_{\pC_2}$ gives a Gorenstein projection $X_{\pC_2}\dashrightarrow Z$ where $Z$ is the hypersurface:
\[ \theta_1\theta_2\theta_3 = A_{31}A_1A_{12}\theta_1 + A_{12}A_2A_{23}\theta_2 + A_{23}A_3A_{31}\theta_3 + \lambda A_{12}A_{23}A_{31} \]
This variety $Z$ is a family of affine cubic surfaces over $\Aa^7_{A_i,A_{ij},\lambda}$ whose general member has three lines at infinity meeting at three $\tfrac12(1,1)$~singularities, obtained by contracting the three $(-2)$-curves in the boundary divisor of $Y_{\pC_2}$. The variable $\theta_{ij}$ can be recovered from $Z$ as a serial Gorenstein type I unprojection of the divisor $\Pi_{ij}=V(A_{ij},\theta_k)$. This gives rise to the following description as an interlaced $4\times 4$-Pfaffians format for the three matrices:
\begin{equation}\label{eq!C2-tom}
\Pf_4 \begin{pmatrix}
A_kA_{ki} & \theta_i & \theta_{ij} & A_j\theta_j + \lambda A_{ki} \\
 & A_{ij} & \theta_j & \theta_{jk} \\
 & & A_{jk} & \theta_k \\
 & & & A_{ki}A_i
\end{pmatrix} 
\end{equation}
where two Pfaffian equations in each matrix are repeated in one of the other two 
matrices. From any one of these three matrices, $X_{\pC_2}$ is given by 
unprojecting the Tom$_{3}$ ideal $(A_{ki},\theta_{ij},\theta_{j},\theta_{jk})$ 
with unprojection variable $\theta_{ki}$.

\subsubsection{Papadakis \& Neves' $\binom{n}{2}$ Pfaffians format}
Papadakis \& Neves \cite{pn} define the \emph{$\binom n2$ Pfaffians format} as a series of parallel type I unprojections from a certain codimension 1 ring. When $n=3$ (and in different notation from \cite{pn}) it is given by the parallel unprojection of the three ideals $(u_1,v_1)$, $(u_2,v_2)$, $(u_3,v_3)$ contained in the hypersurface:
\[ Cu_1u_2u_3 - D_1v_1u_2u_3 - D_2u_1v_2u_3 - D_3u_1u_2v_3 + E_1u_1v_2v_3 + E_2v_1u_2v_3 + E_3v_1v_2u_3 - Fv_1v_2v_3 = 0 \]
The result is a Gorenstein ring in codimension 4 with $9\times16$ equations and syzygies. For $(i,j,k)$ any $\Dih_6$-permutation of $(1,2,3)$, the nine equations are:
\begin{align*}
\tag{$\times3$} w_iu_i &= D_iu_ju_k - E_ju_jv_k - E_ku_kv_j + Fv_jv_k \\
\tag{$\times3$} w_iv_i &= Cu_ju_k - D_ju_kv_j - D_ku_jv_k + E_iv_jv_k \\
\tag{$\times3$} w_jw_k &= (D_jD_k - CE_i)u_i^2 + (CF + D_iE_i - D_jE_j - D_kE_k)u_iv_i + (E_jE_k - D_iF)v_i^2 
\end{align*}
and (as can be seen from the hypersurface model $Z$ of \S\ref{subsection!triple-unprojection}) if we set: 
\[ (u_i,\; v_i,\; w_i;\; C,\; D_i,\; E_i,\; F)\mapsto(\theta_i,\; A_{jk},\; \theta_{jk};\; 1,\; 0,\; -A_i,\; \lambda) \]
then we recover $X_{\pC_2}$. The $\binom 32$ Pfaffians ring has symmetry group\footnote{$\BDih_6$ is the binary dihedral group---i.e.\ a central extension of $D_6$ of order 2. In this case the extra involution switches $u_i\leftrightarrow v_i$, $D_i\leftrightarrow E_i$ and $C\leftrightarrow F$.} $\BDih_6\times \TT^7$ which is slightly larger than $\Dih_6\times \TT^6$, the symmetry group of $X_{\pC_2}$.

\begin{rmk}
The reason that we stick to the cluster algebra format and do not consider this more general format is mainly due to computational advantage. Even though this ring is not that much bigger than $\sA_{\pC_2}$ (and has greater symmetry) in almost all computations the computer has a much harder time working with it. For example, the decomposition of $\binom 32$ Pfaffian format into affine charts is more complicated than for $X_{\pC_2}$, which is worked out next.

\emph{Question}: Can we also obtain the $\binom{3}{2}$ Pfaffians variety from $\sX_{\pC_2}$? It seems a little suspicious that the rank of the torus action is now bigger, and that part of the symmetry switches cluster variables $\theta_{ij}$ with coefficients $A_{ij}$.
\end{rmk}

\subsection{Affine pieces and the deep locus}\label{subsection!C2-affine-pieces}
We explain in more detail the partial covering of the $\pC_2$ cluster variety from Lemma \ref{lem!partial-covering}. In the locus where the cluster variable $\theta_{12}$ does not vanish, the equations defining $X_{\pC_2}\cap (\theta_{12}\neq0)$ reduce to $\CI^{(4)}$:
\begin{align*}
\theta_{31}\theta_{12} &= A_1\theta_1^2 + \lambda A_{23}\theta_1 + A_2A_{23}^2A_3  \\
\theta_{12}\theta_{23} &= A_2\theta_2^2 + \lambda A_{31}\theta_2 + A_3A_{31}^2A_1 \\
\theta_1\theta_2 &= A_{12}\theta_{12} + A_{23}A_3A_{31}  \\
\theta_3\theta_{12} &= A_{31}A_1\theta_1 + \lambda A_{23}A_{31} + A_2A_{23}\theta_2
\end{align*}
Similarly if any of the other cluster variables $\theta_i$, $\theta_{ij}$ are nonvanishing, the equations also reduce to $\CI^{(4)}$. Therefore $X_{\pC_2}$ is partly covered by six affine $\CI^{(4)}$ charts and the remaining `deep locus' $X_0=X\cap V(\theta_1,\theta_{12},\theta_2,\theta_{23},\theta_3,\theta_{31})$ decomposes into 11 pieces. Up to the $\Dih_6$ symmetry, these are:
\begin{align*}
\tag{$\times 1$} \Aa^4_{A_1,A_2,A_3,\lambda} &= V(\theta_1,\ldots,\theta_{31},A_{12},A_{23},A_{31}) \\
\tag{$\times 3$} \Aa^4_{A_{23},A_{31},A_1,A_2} &= V(\theta_1,\ldots,\theta_{31},A_3,A_{12},\lambda) \\
\tag{$\times 6$} \Aa^4_{A_{31},A_2,A_3,\lambda} &= V(\theta_1,\ldots,\theta_{31},A_1,A_{12},A_{23}) \\
\tag{$\times 1$} \Aa^3_{A_{12},A_{23},A_{31}} &= V(\theta_1,\ldots,\theta_{31},A_1,A_2,A_3,\lambda) 
\end{align*}

\subsection{Regular pullbacks from $\pC_2$ format}
\label{eg!first-C2} 

Let $(X_{\pC_2},\mu,\FF)$ be a $\pC_2$ format determined by the one parameter subgroup 
$$\rho=(\rho_1,\dots,\rho_{31})\colon\CC^*\to\TT^D.$$
The action on $X_{\pC_2}$ is readily computed from 
Table~\ref{table!weight_table_for_C2}; we just multiply the matrix of torus 
weights on the left by $\rho$ to obtain: $d(\theta_i)=\rho_i$, 
$d(\theta_{ij})=\rho_{ij}$, $d(A_i)=\rho_{ki}-2\rho_i+\rho_{ij}$, 
$d(A_{ij})=\rho_i-\rho_{ij}+\rho_j$ and $d(\lambda)=\rho_{12} + \rho_{23} + 
\rho_{31} - \rho_1 - \rho_2 - \rho_3$.

We use the following shorthand to write down a regular pullback from $\pC_2$ format:
\[ \pC_2\left(\begin{array}{ccc|ccc|}
\phi^*(\theta_{12}) & \phi^*(\theta_{23}) & \phi^*(\theta_{31}) & \phi^*(\theta_1) & \phi^*(\theta_2) & \phi^*(\theta_3) \\
\phi^*(A_{12}) & \phi^*(A_{23}) & \phi^*(A_{31}) & \phi^*(A_1) &  \phi^*(A_2) & \phi^*(A_3)
\end{array} \:\: \phi^*(\lambda) \right) \]
and the same array with integer entries if we wish to denote a generic pullback with given degrees. 

The $M$-graded Hilbert series of $X_{\pC_2}$ can be computed using Macaulay2 (or even by hand), and we can easily translate this into the $\ZZ$-graded Hilbert series of $(X_{\pC_2},\mu,\FF)$:
\[ P_{(X_{\pC_2},\mu,\FF)}(t) = P_{X_{\pC_2}}(t^{\rho_1},t^{\rho_{12}},t^{\rho_2},t^{\rho_{23}},t^{\rho_3},t^{\rho_{31}}) \]
The resolution $\FF$ is Gorenstein codimension four with 9 relations and 16 syzygies, and the Hilbert numerator is of the form:
\[ 1 - \sum(t^{\rho_i+\rho_j}+t^{\rho_{ij}+\rho_{jk}}+t^{\rho_i+\rho_{jk}}) + \cdots + t^{\alpha} \]
%\begin{align*}
%1&-t^{\rho_1+\rho_2}-t^{\rho_1+\rho_3}-t^{\rho_2+\rho_3}-t^{\rho_4+\rho_5}-t^{\rho_4+\rho_6}-
%t^{\rho_5+\rho_6}-t^{\rho_2+\rho_4}-t^{\rho_3+\rho_5}-t^{\rho_4+\rho_6} \\
% &+2t^{\rho_1+\rho_2+\rho_3}+t^{\rho_1+\rho_2+\rho_4}+t^{\rho_1+\rho_2+\rho_6}+t^{\rho_1+\rho_3+\rho_5}
% +t^{\rho_1+\rho_3+\rho_6}+t^{\rho_2+\rho_3+\rho_4}+t^{\rho_2+\rho_3+\rho_5}\\
% &+ t^{\rho_2+\rho_4+\rho_5}+t^{\rho_3+\rho_4+\rho_5}+t^{\rho_1+\rho_5+\rho_6}+
% t^{\rho_3+\rho_5+\rho_6}+t^{\rho_1+\rho_4+\rho_6} +t^{\rho_2+\rho_4+\rho_6}+2t^{\rho_4+\rho_5+\rho_6} \\
%&-\dots+t^{\alpha},,
%\end{align*}
where the adjunction number is $\alpha=\rho_1+\rho_{12}+\rho_2+\rho_{23}+\rho_3+\rho_{31}$. 
%\StephenComment{Perhaps it is helpful to write out the 1st syzygies in full?}

\subsection{Singular locus}

%\TomComment{Write here: an explanation of how we intend to apply the following lemma in the next section. We note that all the singular components in the lemma have codimension 3 in the key variety $X^z$, hence if we make a regular pullback to $\widehat{Y}$ we expect the pullback of that component to have codimension $\geq3$ (i.e.\ dimension $\geq1$). Not allowed. }

We want to construct quasismooth 3-dimensional varieties via regular pullback from a key variety $X$ that turns out to be rather singular. According to Lemma~\ref{lemma!singularity-avoidance}, we have to control the dimension of the pullback of $\sing(X)$, so we first compute the singular locus of $X_{\pC_2}$ and of some distinguished subvarieties of $X_{\pC_2}$.
%is useful for forcing conditions of the form ``the pullback of this variable must be a non-zero constant, else $\phi$ pulls back this big component of $\sing(X)$'', thus reducing the number of cases we eventually have to consider.

\begin{lem} \label{lemma!C2_singular_locus}
The reduced singular locus of $X_{\pC_2}$ is contained in the deep locus \[ \sing(X_{\pC_2}) \subset X_0 = X_{\pC_2}\cap V(\theta_1,\theta_{12},\theta_2,\theta_{23},\theta_3,\theta_{31}) \] 
and decomposes into four irreducible linear subvarieties, given by:
\[ \Aa^4_{A_1,A_2,A_3,\lambda} = X_0\cap V(A_{12},A_{23},A_{31}) \quad \text{and} \quad \Aa^2_{A_i,A_{jk}} = X_0\cap V(A_{ij},A_j,A_k,A_{ki},\lambda). \]
In particular all components of the singular locus have codimension $\geq5$ in $X_{\pC_2}$.

Moreover, the singular locus of the hyperplane section $X^z := X_{\pC_2}\cap V(z)$ contains the following bad components of codimension $\geq3$.
\begin{enumerate}
\item $\sing\left(X^{\theta_i}\right)$ is contained in the locus $X^{\theta_i}_0:=X^{\theta_i}\cap V(\theta_{ij},\theta_j,\theta_k,\theta_{ki})$ and contains the following component which has codimension 3 in $X^{\theta_i}$:
\[ \Aa^5_{\theta_{jk},A_i,A_j,A_k,\lambda} = X^{\theta_i}_0 \cap V(A_{ij},A_{jk},A_{ki}). \]
\item $\sing\left(X^{\theta_{ij}}\right)$ contains the following component which has codimension 1 in $X^{\theta_{ij}}$:
\[ X^{\theta_{ij}}\cap V(\theta_i,\theta_j,A_{jk},A_{ki}). \]
\item $\sing\left(X^{A_i}\right)$ is contained in the locus $X^{A_i}_0:=X^{A_i}\cap V(\theta_{ij},\theta_j,\theta_{jk},\theta_k,\theta_{ki})$ and contains the following component which has codimension 3 in $X^{A_i}$:
\[ \Aa^5_{\theta_i,A_{ij},A_j,A_k,A_{ki}} = X^{A_i}_0 \cap V(A_i,A_{jk},\lambda). \]
\item $\sing\left(X^{A_{ij}}\right)$ is contained in the locus $X^{A_{ij}}_0:=X^{A_{ij}}\cap V(\theta_j,\theta_{jk},\theta_k,\theta_{ki},\theta_i)$ and contains the following component which has codimension 3 in $X^{A_{ij}}$:
\[ \Aa^5_{\theta_{ij},A_i,A_j,A_k,\lambda} = X^{A_{ij}}_0 \cap V(A_{ij},A_{jk},A_{ki}). \]
%\item $\sing(X^{\lambda})=\Aa^3_{A_1,A_2,A_3} \cup \Aa^3_{A_{12},A_{23},A_{31}} \cup \Aa^2_{A_1,A_{23}} \cup \Aa^2_{A_2,A_{31}} \cup \Aa^2_{A_3,A_{12}}$\StephenComment{(4) is not relevant}
\end{enumerate}
\end{lem}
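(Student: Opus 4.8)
The plan is to compute each singular locus directly from a Jacobian-matrix calculation, exploiting the partial open covering from Lemma~\ref{lem!partial-covering} to localise the problem. For the first assertion, about $\sing(X_{\pC_2})$ itself, I would first argue that each chart $U_i=X_{\pC_2}\cap(\theta_i\neq0)$ and $U_{ij}=X_{\pC_2}\cap(\theta_{ij}\neq0)$ is a complete intersection of codimension $4$ (as recorded in \S\ref{subsection!C2-affine-pieces}), hence smooth wherever its $4\times 4$ Jacobian has maximal rank. A short computation --- for instance in the $\theta_{12}\neq0$ chart using the four explicit $\CI^{(4)}$ equations listed there --- shows that on each such chart the Jacobian already has rank $4$, so every $U_i$, $U_{ij}$ is smooth. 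Therefore $\sing(X_{\pC_2})$ is contained in the deep locus $X_0$, and it remains to intersect with $X_0$ and check which of its $11$ components (listed in \S\ref{subsection!C2-affine-pieces}) are actually singular. On each of these linear strata one evaluates the $9\times 13$ Jacobian of the relations \eqref{eqns!C2-format} and computes its rank: on $\Aa^4_{A_1,A_2,A_3,\lambda}=X_0\cap V(A_{12},A_{23},A_{31})$ and on the three strata $\Aa^2_{A_i,A_{jk}}=X_0\cap V(A_{ij},A_j,A_k,A_{ki},\lambda)$ the rank drops below $4$, while on the remaining seven strata the rank is $4$ (so they lie in the closure of the smooth locus, or the Jacobian criterion applies at their generic point). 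The $\Dih_6$-symmetry cuts the bookkeeping down by a factor of up to $6$. Codimensions are then read off: $\Aa^4$ sits in the $9$-fold $X_{\pC_2}$ with codimension $5$, and $\Aa^2$ with codimension $7$, giving the claimed bound $\geq 5$.

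For the four statements about hyperplane sections $X^z=X_{\pC_2}\cap V(z)$, the strategy is the same but now applied to the $8$-fold $X^z$: its ideal is generated by the $9$ cluster relations together with the linear form $z$, so $\sing(X^z)$ is cut out by the vanishing of the maximal ($5\times 5$, since $X^z$ has codimension $5$ in $\Aa^{13}$) minors of the enlarged Jacobian, together with the equations of $X^z$. Again I would first localise on the charts $U_i\cap V(z)$, $U_{ij}\cap V(z)$: on a chart where $X_{\pC_2}$ is a smooth $\CI^{(4)}$, the hyperplane section $X^z$ is singular exactly where $V(z)$ fails to be transverse, i.e.\ where $dz$ lies in the span of the four differentials of the $\CI$ equations --- a condition that locates the bad components in the listed loci such as $X^{\theta_i}_0=X^{\theta_i}\cap V(\theta_{ij},\theta_j,\theta_k,\theta_{ki})$. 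The containments ``$\sing(X^{\theta_i})\subset X^{\theta_i}_0$'' etc.\ are established this way, and then one identifies the explicit component inside the deep-type locus by a final Jacobian rank computation on the indicated linear subvariety (e.g.\ $\Aa^5_{\theta_{jk},A_i,A_j,A_k,\lambda}=X^{\theta_i}_0\cap V(A_{ij},A_{jk},A_{ki})$), checking that the rank is $<5$ there. For case (2), $X^{\theta_{ij}}$, the deleted variable $\theta_{ij}$ is redundant on $U_{ij}$, so $X^{\theta_{ij}}$ is more degenerate --- the section is reducible/more singular and one directly exhibits the codimension-$1$ component $X^{\theta_{ij}}\cap V(\theta_i,\theta_j,A_{jk},A_{ki})$ by substituting $z=\theta_{ij}$ into the equations and factoring.

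Throughout, the computations are best organised by exploiting the alternative presentations of \S\ref{section!C2-cluster-format}: the interlaced $4\times4$-Pfaffian form \eqref{eq!C2-tom} makes the Jacobian rank drops transparent at the unprojection divisors $\Pi_{ij}=V(A_{ij},\theta_k)$, which is essentially where the codimension-$5$ components of $\sing(X_{\pC_2})$ live. I expect the main obstacle to be purely organisational rather than conceptual: there are many strata and many hyperplane-section cases, and one must be careful to distinguish ``contained in'' (an upper bound on the singular locus, needing a transversality argument on all charts) from ``contains'' (a lower bound, needing only one explicit Jacobian rank drop), and to verify that the list of strata in $X_0$ is complete and that no further bad components hide in the intersections of strata. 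A machine computation (Macaulay2 or Magma) verifying the Jacobian ranks on each of the finitely many linear strata --- together with the $\Dih_6$-symmetry reduction --- makes this entirely routine, and I would present only the key rank-drop loci explicitly and relegate the exhaustive check to the reader or to accompanying computer files.
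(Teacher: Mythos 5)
Your overall plan matches what the paper actually does (the paper's proof essentially just says the singular loci can be checked in Macaulay2 or Magma, cf.\ \cite[Theorem 1.1]{God3}), so the Jacobian-on-charts-and-strata approach is the right one. But as a verification strategy your proposal has one concrete flaw.

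You claim the singular locus is found by checking the Jacobian rank ``on each of the $11$ linear strata'' of $X_0$, asserting that the rank drops on the single $\Aa^4_{A_1,A_2,A_3,\lambda}$ and on ``the three strata $\Aa^2_{A_i,A_{jk}}$'' and is $4$ on ``the remaining seven.'' This is not a correct enumeration: the three varieties $\Aa^2_{A_i,A_{jk}} = X_0\cap V(A_{ij},A_j,A_k,A_{ki},\lambda)$ are \emph{not} among the $11$ components of $X_0$ listed in \S\ref{subsection!C2-affine-pieces} --- all $11$ of those are $3$- or $4$-dimensional, and each $\Aa^2_{A_i,A_{jk}}$ is a proper (codimension-$2$) linear subvariety sitting inside several of the $\Aa^4$ strata. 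Consequently, a stratum-by-stratum generic-rank computation would report ``smooth'' on those strata (because their generic point is smooth) and would miss the three $\Aa^2$ components entirely. To find them you have to compute the \emph{actual vanishing locus} of the $4\times 4$ Jacobian minors restricted to each stratum, not just the rank at the generic point. This is what the computer does; your hand-organised version as stated does not.

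A second, smaller point: for case (2), the paper does not argue via Jacobian ranks at all. It gives a short structural argument --- $T=A_i\theta_i/A_{jk}=A_j\theta_j/A_{ki}$ solves a monic quadratic $T^2+\lambda T+A_1A_2A_3=0$ over $\sO_{X^{\theta_{ij}}}$, so $X^{\theta_{ij}}$ is non-normal, and being Gorenstein (hence $S_2$) it must then fail $R_1$, i.e.\ be singular in codimension $1$. Your ``substitute $z=\theta_{ij}$ and factor'' is both vaguer and less robust: you would still have to show the codimension-$1$ singularity directly, which is exactly what the $S_2$ trick makes unnecessary. Also a small slip: you say $\theta_{ij}$ is ``redundant'' on $U_{ij}$; in fact on $U_{ij}$ the variable $\theta_{ij}$ is the one kept, and it is $\theta_{jk},\theta_{ki},A_{ij},\theta_k$ that become eliminable, which is what makes $U_{ij}$ a $\CI^{(4)}$. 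Neither slip is fatal to the statement, but the stratum enumeration issue would lead to an incorrect singular locus if followed literally by hand.
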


\begin{proof}
The statements about singular loci can easily be checked by using Macaulay2 or Magma (cf.\ \cite[Theorem 1.1]{God3}). Note that if $\theta_{ij}=0$ then $T = \frac{A_i\theta_i}{A_{jk}} = \frac{A_j\theta_j}{A_{ki}}$ are solutions to the equation $T^2 + \lambda T + A_1A_2A_3 = 0$ in the ring $\sO_{X^{\theta_{ij}}}$, so $X^{\theta_{ij}}$ is not normal. Since $X^{\theta_{ij}}$ is Gorenstein, and hence $S_2$, it must be singular in codimension 1.
\end{proof}

\subsection{Quasismoothness conditions}

Let $\widehat Y \subset \Aa^8$ be the 4-dimensional affine cone over a quasismooth weighted projective 3-fold $Y\subset w\PP^7$ given as a regular pullback of the $\pC_2$ cluster variety. The following Lemma lists the conditions imposed on the format by avoiding large components in the singular locus of $X_{\pC_2}$.
%codimension 4. We analyse the conditions imposed on $\pC_2$ format by the assuming that $\widehat Y$ is a quasismooth regular pullback from $X_{\pC_2}$.

\begin{prop} \label{proposition!C2_subformats}
Suppose that $\widehat Y := \phi^{-1}(X_{\pC_2})$ is a regular pullback and is not a complete intersection. Then, for all $i,j$, we must have $d(\theta_i)>0$, $d(\theta_{ij})>0$ and one of the following conditions must hold:
\begin{enumerate} \setlength\itemsep{0.5em}
\item If $d(A_{ij})>0$ for all $i,j$ and $d(\lambda)>0$, then we say $\widehat{Y}$ is in {\bf `$\pC_2$ format'}. In this case, $d(A_i)\geq 0$ for all~$i$.
\item If $d(A_{ij})>0$ for all $i,j$ and $d(\lambda)=0$, then we say $\widehat Y$ is in {\bf `$\PP^2\times \PP^2$ format'}. In this case, either
\begin{enumerate}
\item $d(A_1)=d(A_2)=d(A_3)=0$, or
\item $d(A_i)<0$ for some $i$.
\end{enumerate}
\item If $d(A_{ij})=0$ then we say $\widehat{Y}$ is in {\bf `$\pA_2+\CI^{(1)}$ 
format'}. In this case, all of $A_i$, $A_{jk}$, $A_{ki}$, $A_j$, $\lambda$ have 
degree~$>0$.
\end{enumerate}
\end{prop}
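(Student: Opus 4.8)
The plan is to analyse the degree function $d$ on the generators of $\sA_{\pC_2}$, using the three families of relations in \eqref{eqns!C2-format} together with Lemma~\ref{lemma!singularity-avoidance}(3) applied to the singular strata computed in Lemma~\ref{lemma!C2_singular_locus}. First I would establish the positivity of the cluster variables. Since $\widehat Y$ is a regular pullback which is not a complete intersection, none of the cluster variables can be redundant or pulled back to $0$: if $d(\theta_i)\le 0$ then $\phi^*(\theta_i)$ is either $0$ (and then the relation $\theta_i\theta_j = A_{ij}\theta_{ij}+A_{jk}A_kA_{ki}$ forces a further vanishing, propagating around the $\Dih_6$-orbit and collapsing $\widehat Y$ onto a complete intersection via the affine charts of Lemma~\ref{lem!partial-covering}) or a nonzero constant (making $\theta_i$ redundant, which by the chart description again degenerates $\widehat Y$ to $\CI^{(4)}$ or lower). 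So $d(\theta_i)>0$ and $d(\theta_{ij})>0$ for all $i,j$; homogeneity of the first relation then reads $d(\theta_i)+d(\theta_j)=d(A_{ij})+d(\theta_{ij})=d(A_{jk})+d(A_k)+d(A_{ki})$, which will be the main bookkeeping identity.

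Next I would organise the casework by the sign of the degrees of the tag-1 coefficients $A_{ij}$ and of the parameter $\lambda$, noting that by the $\Dih_6$-symmetry one may assume a uniform behaviour on each orbit. The key point is that whenever a coefficient has degree $0$, its pullback is a nonzero constant, so the corresponding variable is redundant and can be eliminated, and one must check which singular stratum of $X_{\pC_2}$ (from Lemma~\ref{lemma!C2_singular_locus}) this redundancy forces $\widehat Y$ to meet. Concretely: if $d(A_{ij})>0$ for all $i,j$, the hyperplane model $Z$ of \S\ref{subsection!triple-unprojection} shows the relevant obstruction is the stratum $\Aa^4_{A_1,A_2,A_3,\lambda}=X_0\cap V(A_{12},A_{23},A_{31})$; by Lemma~\ref{lemma!singularity-avoidance}(3), since this has codimension $5<8$ in $X_{\pC_2}$, at least one defining equation among $A_{12},A_{23},A_{31}$ must have degree $0$ — contradiction — unless the pullback locus is at worst the cone point, which, combined with $d(\lambda)$, splits into subcases (1) ($d(\lambda)>0$) and (2) ($d(\lambda)=0$). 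In case (1) one then uses the relation $\theta_{ki}\theta_{ij}=A_i\theta_i^2+\lambda A_{jk}\theta_i+A_jA_{jk}^2A_k$: comparing degrees of the first two terms gives $d(A_i)+2d(\theta_i)=d(\lambda)+d(A_{jk})+d(\theta_i)$, and since $d(A_{jk})+d(A_k)+d(A_{ki})=d(\theta_k)+d(\theta_i)$ from the tag-0 identity one derives $d(A_i)=d(A_{jk})+d(\lambda)-d(\theta_i)$; feeding back the constraint that $d(\theta_i)\le d(A_{jk})$ would fail to match — rather, I expect the clean route is: from the third relation $\theta_i\theta_{jk}=A_{ij}A_j\theta_j+\lambda A_{ki}A_{ij}+A_kA_{ki}\theta_k$ and $d(\theta_i)+d(\theta_{jk})=d(A_{ij})+d(\theta_{jk})+\cdots$, but I will instead chase the identity $d(A_i)+d(\theta_i)=d(A_{jk})+d(\lambda)$ (from the first two monomials of the tag-$2$ relation after using $d(\theta_i)+d(\theta_{ki})=d(A_{ki})+d(\theta_{ki})$... — the precise combination is a routine linear-algebra check) to conclude $d(A_i)\ge 0$, using $d(\theta_i)>0$ only if it is bounded by $d(A_{jk})+d(\lambda)$, which follows because otherwise $\phi^*$ of the quadratic term $A_i\theta_i^2$ would be the unique top-degree monomial and $A_i$ redundant with the wrong sign.

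For case (2), $d(\lambda)=0$ means $\phi^*(\lambda)$ is a nonzero constant, and setting $\lambda=1$, $A_i=1$ in \eqref{eqns!C2-format} recovers a presentation of the Segre-type variety; the relevant singular stratum is then $\Aa^2_{A_i,A_{jk}}=X_0\cap V(A_{ij},A_j,A_k,A_{ki},\lambda)$, which has codimension $7<8$, so Lemma~\ref{lemma!singularity-avoidance}(3) forces one of $A_{ij},A_j,A_k,A_{ki},\lambda$ to have degree $0$; since the $A_{ij}$ are assumed positive and $\lambda$ is already degree $0$, this is automatically satisfied, but to get the dichotomy (2a)/(2b) I would argue that if all of $d(A_1),d(A_2),d(A_3)\ge 0$ then in fact they must all be $0$ — otherwise say $d(A_1)>0$ while $d(A_2)=0$ or $d(A_3)=0$ breaks the $\Dih_6$-symmetry of the orbit $\{A_1,A_2,A_3\}$, and if all three are strictly positive the term $A_jA_{jk}^2A_k$ in the tag-$2$ relation has strictly larger degree than $\lambda A_{jk}\theta_i$ unless $d(\theta_i)$ is correspondingly large, which again degenerates the format; so either all $d(A_i)=0$ (case 2a) or some $d(A_i)<0$ (case 2b). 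Finally, case (3): if $d(A_{ij})=0$ for some, hence by symmetry all, $i,j$, then each $\phi^*(A_{ij})$ is a nonzero constant, so eliminating the redundant $\theta_{ij}$ via $\theta_i\theta_j=A_{ij}\theta_{ij}+A_{jk}A_kA_{ki}$ realises $\widehat Y$ as an $\pA_2$ pullback (the remaining equations becoming the $\Gr(2,5)$ Pfaffians as in \S\ref{section!A2-format}) times a residual complete-intersection equation coming from the collapsed tag-$2$ relation; the strata forced to be avoided are the $\Aa^5$-components of the hyperplane sections $X^{A_{ij}}_0$ from Lemma~\ref{lemma!C2_singular_locus}(4), each of codimension $3<8$, so again by Lemma~\ref{lemma!singularity-avoidance}(3) the defining equations $A_i,A_{jk},\lambda$ (generators of the ideal of that stratum) must have positive degree — which is exactly the asserted conclusion.

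The hard part, I expect, is not any single degree computation — those are all linear consequences of the homogeneity of \eqref{eqns!C2-format} — but rather the bookkeeping needed to show that the listed cases are genuinely \emph{exhaustive} and \emph{mutually exclusive}, i.e.\ that no other sign pattern of $(d(A_{ij}),d(\lambda),d(A_i))$ can occur. This requires systematically ruling out, for every remaining sign pattern, the existence of a quasismooth pullback, by exhibiting a singular stratum of $X_{\pC_2}$ (or of a hyperplane section $X^z$) of codimension $<8$ whose ideal is generated by variables none of which has degree $0$, and invoking Lemma~\ref{lemma!singularity-avoidance}(3). Keeping track of which $\Dih_6$-orbits the vanishing conditions propagate along — and checking that a "mixed" degree pattern within an orbit is always incompatible with $\TT^D$-homogeneity of the three relation families — is where the real care is needed.
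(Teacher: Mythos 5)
Your general strategy is right --- organise by the signs of the degrees $d(A_{ij})$, $d(\lambda)$, $d(A_i)$, and use Lemma~\ref{lemma!singularity-avoidance}(3) together with the singular strata of $X_{\pC_2}$ and its hyperplane sections from Lemma~\ref{lemma!C2_singular_locus}. But several of your intermediate steps either differ from what actually works or are genuinely wrong, and the result is that your argument does not close.

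First, your argument for $d(\theta_i)>0$ is not the one that works. You claim that if $\phi^*(\theta_i)=0$ then the relation $\theta_i\theta_j = A_{ij}\theta_{ij}+A_{jk}A_kA_{ki}$ ``forces a further vanishing, propagating around the $\Dih_6$-orbit.'' It does not: the relation becomes $A_{ij}\theta_{ij}=-A_{jk}A_kA_{ki}$, which constrains no single pullback to vanish. The paper instead argues that if $d(\theta_i)<0$ then $\phi$ factors through the hyperplane section $X^{\theta_i}$, whose singular locus contains the codimension-3 component $\Aa^5_{\theta_{jk},A_i,A_j,A_k,\lambda} = X^{\theta_i}_0\cap V(A_{ij},A_{jk},A_{ki})$ from Lemma~\ref{lemma!C2_singular_locus}(1); avoiding this forces some $d(A_{ij})=0$, which lands you in case~(3) with a zero entry in the Pfaffian matrix, and then quasismoothness fails by \cite[Proposition~2.7]{bkz}.

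Second, and more seriously, you nowhere isolate the linear identity $2d(\lambda)=d(A_1)+d(A_2)+d(A_3)$, which drops out of Table~\ref{table!weight_table_for_C2} (the character of $\lambda$ is half the sum of the characters of the $A_i$). This single identity does essentially all the work of distinguishing cases (1), (2a), (2b) in the paper: if $d(\lambda)>0$, the contrapositive of Claim~2 (namely that $d(A_i)<0$ forces $d(\lambda)=0$ or $d(A_{jk})=0$) gives $d(A_i)\ge0$ for all $i$; if $d(\lambda)=0$, the identity forces the $d(A_i)$ to sum to zero, so either they are all zero (case 2a) or some is negative (case 2b); and $d(\lambda)<0$ forces some $d(A_i)<0$ and then a zero Pfaffian entry as before. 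Your substitute --- degree chases through the tag-2 and tag-3 relations, which you yourself concede devolve into ``the precise combination is a routine linear-algebra check'' --- is both heavier and, as written, contains index errors (e.g.\ $d(A_{jk})+d(A_k)+d(A_{ki})$ equals $d(\theta_i)+d(\theta_j)$, not $d(\theta_k)+d(\theta_i)$).

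Third, your appeal to $\Dih_6$-symmetry to assume ``uniform behaviour on each orbit'' is false: the degrees within a $\Dih_6$-orbit (e.g.\ $d(A_1)$, $d(A_2)$, $d(A_3)$) are in general distinct, and the whole content of case~(2b) is that exactly one of them may be negative while the others are not. Your attempted proof of (2a)/(2b) --- ``if all $d(A_i)\ge0$ then they must all be $0$, otherwise symmetry breaks'' --- is therefore not a valid argument; the correct one is the identity in the previous paragraph. Similarly, in case~(3) the paper does not assume all four $d(A_{ij})=0$ by symmetry: it fixes one, say $d(A_{12})=0$, and derives the constraint that the remaining Pfaffian entries have positive degree from \cite[Proposition~2.7]{bkz} after rewriting the equations as a hypersurface in $\Gr(2,5)$ format. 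Your codimension accounting for the deep-locus stratum $\Aa^2_{A_i,A_{jk}}$ is also off: its ideal has $11$ generators, codimension $11$ in $\Aa^{13}$, which is not $<m=8$, so Lemma~\ref{lemma!singularity-avoidance}(3) does not apply to it directly as you claim.

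In short, you correctly identify the two main technical ingredients (the singularity-avoidance lemma and the computed singular strata) but miss the organising linear identity among the characters, use an invalid symmetry reduction, and give a positivity argument for the cluster variables that does not work. These are not cosmetic differences: without the character identity the casework does not collapse into the clean trichotomy of the statement.
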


\begin{proof}
If $d(\theta_i)=0$ for some $i$, or if $d(\theta_{ij})=0$ for some $i,j$, then we can eliminate most of the equations to be left with a $\CI^{(4)}$. Therefore we may assume that $d(\theta_i)\neq0$ and $d(\theta_{ij})\neq0$ for all~$i,j$. 

We now prove that the stated conditions on the degrees of the variables hold through the following series of claims. We repeatedly use the following argument: if a variable $z$ has degree $d(z)<0$ then $\phi^*(z)=0$, and hence $\phi$ must factor as a regular pullback from $X^z$. Then, by Lemma~\ref{lemma!singularity-avoidance}(3), some of the other variables must be non-vanishing and constant in order to avoid pulling back the bad components in $\sing\left(X^z\right)$ of codimension $\geq3$, listed in Lemma~\ref{lemma!C2_singular_locus}.

{\bf Claim 1}: Any one of $d(\theta_i)<0$, $d(\theta_{ij})<0$ or $d(A_{ij})<0$ cannot happen. 

If $d(\theta_i)<0$ then $\phi$ factors through $X^{\theta_i}$ and, in order to avoid pulling back the bad component of Lemma~\ref{lemma!C2_singular_locus}(1), we must have either $d(A_{ij})=0$, $d(A_{jk})=0$ or $d(A_{ki})=0$. This puts us in case (3) below, but with a zero entry appearing in the Pfaffian matrix. Hence $\widehat{Y}$ will fail to be quasismooth, by \cite[Proposition 2.7]{bkz}. Similarly for the cases $d(\theta_{ij})<0$ and $d(A_{ij})<0$.

{\bf Claim 2}: If $d(A_i)<0$ for some $i$, then we are either in case (2.b) or case (3).

To avoid pulling back the bad component of $\sing\left(X^{A_i}\right)$ we need either $d(\lambda)=0$ which puts us in case (2.b), or $d(A_{jk})=0$ which puts us in case $(3)$.

{\bf Claim 3}: If $d(\lambda)=0$ then we are in in case (2), and $d(\lambda)<0$ cannot happen.

From Table~\ref{table!weight_table_for_C2} we have the relation $2d(\lambda) = d(A_1) + d(A_2) + d(A_3)$. Therefore $d(\lambda)=0$ implies either that $d(A_1)=d(A_2)=d(A_3)=0$ or that $d(A_i)<0$ for some $i$, which are the two conditions of case (2). If $d(\lambda)<0$ then $d(A_i)<0$ for some $i$ and by Claim 2 we end up in case (3), but with a zero entry in the Pfaffian matrix. Hence $\widehat{Y}$ will not be quasismooth, as in the conclusion of Claim 1. 

This completes the analysis of the allowed degrees in cases (1)--(3). We now show that case (2) is equivalent to $\PP^2\times\PP^2$ format and case (3) is equivalent to $\pA_2+\CI^{(1)}$ format.

\paragraph{Case (2) is $\PP^2\times\PP^2$ format.} 
In case (2.a), $\phi$ factors through the regular pullback of $X_{\pC_2}$ by the morphism $\phi_1\colon \Aa^9\to \Aa^{13}$, given by $\phi_1^*(\lambda) = \phi_1^*(A_1) = \phi_1^*(A_2) = \phi_1^*(A_3) = 1$ and $\phi_1^*(z)=z$ for all other variables. If we make the change of variables $X_i=(1+\omega)(\theta_i-\omega A_{jk})$ and $Y_i=(1+\omega^2)(\theta_i-\omega^2A_{jk})$ for $\omega$ a primitive third root of unity, the equations defining $\phi_1^{-1}(X_{\pC_2})$ can be written as
\[ \bigwedge^2\begin{pmatrix}
-\theta_{12} & X_2 & Y_1 \\
Y_2 & -\theta_{23} & X_3 \\
X_1 & Y_3 & -\theta_{31} 
\end{pmatrix} = 0, \]
so that $\phi_1^{-1}(X_{\pC_2})$ is a regular pullback from $\PP^2\times\PP^2$ format. 

In case (2.b), pulling back $X_{\pC_2}$ by the morphism $\phi_2\colon \Aa^{11}\to \Aa^{13}$ given by $\phi_2^*(\lambda) = 1$, $\phi_2^*(A_i) = 0$ and $\phi_2^*(z)=z$ for all other variables, gives:
\[ \bigwedge^2\begin{pmatrix}
\theta_{ij} & A_j\theta_j + A_{ki} & A_jA_{jk}A_k + \theta_i \\
\theta_j & \theta_{jk} & A_k\theta_k + A_{ij} \\
A_{jk} & \theta_k & \theta_{ki} 
\end{pmatrix} = 0, \]
so that $\phi_2^{-1}(X_{\pC_2})$ can also be rewritten as a regular pullback from $\PP^2\times\PP^2$ format. 

\paragraph{Case (3) is $\pA_{2}+\CI^{(1)}$ format.}
In case (3), pulling back $X_{\pC_2}$ by the morphism $\phi_3\colon \Aa^{12}\to \Aa^{13}$ given by $\phi_3^*(A_{ij}) = 1$ and $\phi_3^*(z)=z$ for all other variables, gives:
\[ \Pf_4 \begin{pmatrix}
A_i & \theta_j & \theta_{jk} & A_k\theta_k + \lambda \\
 & A_{jk} & \theta_k & \theta_{ki} \\
 & & A_{ki} & \theta_i \\
 & & & A_j
\end{pmatrix} = 0 \quad \text{and} \quad \theta_{ij} = \theta_i\theta_j - A_{jk}A_kA_{ki}. \]
Therefore, $\phi_3^{-1}(X_{\pC_2})$ can be rewritten as a regular pullback from a hypersurface inside $\Gr(2,5)$ format. Note that all entries in the Pfaffian matrix must have degree $\geq0$, else $\widehat{Y}$ is too singular to be the affine cone over a quasismooth 3-fold $Y$, and if any entry has degree $0$ then $\widehat{Y}$ is a~$\CI^{(4)}$. 
\end{proof}

As a consequence of the Proposition we may easily discard cases with $d(\lambda)<0$, and if $d(\lambda)=0$ we could search with a simpler algorithm for $\PP^2\times\PP^2$ format (or just appeal to Brown, Kasprzyk \& Qureshi's work on Fano 3-folds in $\PP^2\times\PP^2$ format \cite{bkq}).

\begin{eg}[Hypersurface inside Pfaffians] 
The reason that we call case (3) of the Proposition `$\pA_2+\CI^{(1)}$ format' (and not simply `$\pA_2$ format') is that if $\phi_3^*(\theta_{ij})$ cannot be used to eliminate a variable then the variety we construct by regular pullback will be a genuine hypersurface inside $\Gr(2,5)$ format. This happens for Fano 3-fold \#29374, classically constructed as $Y=Q_2\cap\Gr(2,5)\cap\PP^7$, where $Q_2$ is a quadric hypersurface. We construct $Y$ in format $\pC_2\left(\begin{smallmatrix}2&1&1\\0&1&1\end{smallmatrix}\bigr\vert\begin{smallmatrix}1&1&1\\1&1&0\end{smallmatrix}\bigr\vert\begin{smallmatrix}1\end{smallmatrix}\right)$. Here, $d(A_{12})=0$ but, since there are no variables of degree $d(\theta_{12})=2$ to eliminate, the equation involving $\theta_{12}$ defines a quadric hypersurface. A similar phenomenon occurs for $\pG_2$ format. Indeed, \#29374 is also constructed as $\pG_2\left(\begin{smallmatrix}2&2&2&1\\0&0&0&1\end{smallmatrix}\bigr\vert\begin{smallmatrix}1&1&1&1\\0&1&1&0\end{smallmatrix}\bigr\vert\begin{smallmatrix}1\\1\end{smallmatrix}\right)$
(see Example \ref{eg!first-G2} for notation). This format commonly occurs in constructions of general type 3-folds \cite{bkz}.
\end{eg}

%\StephenComment{TODO: Change in big table}

\section{$\pG_2$ cluster format} \label{section!G2-cluster-format}

The cluster variety $X_{\pG_2}\subset \Aa^{18}$ is the affine Gorenstein $12$-fold in codimension $6$ described in \S\ref{subsubsection!G2-format}. We now describe how to derive the equations \eqref{eqns!G2-format} defining $X_{\pG_2}$ from the mirror family $\sX_{\pG_2}$. Throughout this section we consider subscripts $(i,j,k,l)$ in all formulae to vary over all $\Dih_8$-permutations of $(1,2,3,4)$. We give a parallel treatment to the previous section
on $X_{\pC_2}$, but the situation for $X_{\pG_2}$ is more involved. Indeed, we realise $X_{\pC_2}$ as a special case of $X_{\pG_2}$.

\subsection{The equations for $\pG_2$ format} \label{section!G2-format}

Recall that the mirror family $\sX_{\pG_2}$ is defined over a toric basic variety  $B_{\pG_2}=\Spec\left(\CC[\NE(Y_{\pG_2})]\right)$.

\subsubsection{The toric base}
The base variety $B_{\pG_2}$ is a singular affine toric variety with 40 equations of the form $z^X=z^Y$, where $X=Y$ is a linear relation for some classes $X,Y\in\NE(Y_{\pG_2})$. We only write down eight of the 40 equations, which will be relevant to our calculations:
\begin{equation}\label{eqns!G2-toric-base}
\begin{aligned} \relax
[D_i]+3[E_i]+2[F_{ik}]+[F_{jl}] &= [D_j]+3[D_{jk}]+2[D_k]+3[D_{kl}]+[D_l] \\ \relax
[D_{ij}]+[E_{ij}] &= [D_{jk}]+[D_k]+2[D_{kl}]+[D_l]+[D_{li}] 
\end{aligned}
\end{equation}
%\begin{center}
%\resizebox{\textwidth}{!}{
%$\begin{array}{lccccc} \relax
%[E_{ij}]+[E_{kl}] & = & [D_j]+3[D_{jk}]+[D_k]+[E_{jk}] \\ \relax
%[E_{li}]+[E_{ij}] & = & [D_i]+3[E_i]+2[C_{ik}]+[C_{jl}] & = & [D_j]+3[D_{jk}]+2[D_k]+3[D_{kl}]+[D_l] \\ \relax
%[E_{ij}]+[E_k] & = & [D_j]+[D_{jk}]+2[E_k]+[C_{jl}] & = & [D_{kl}]+[D_l]+2[D_{li}]+[D_i]+[E_i] \\ \relax
%[E_{ij}]+[D_{ij}] & = & [E_i]+[E_j]+[C_{ik}]+[C_{jl}] & = & [D_{jk}]+[D_k]+2[D_{kl}]+[D_l]+[D_{li}] \\ \relax
%[E_i]+[E_k]+[C_{ik}] & = & [D_{kl}]+[D_l]+[D_{li}]+[E_l] & = & [D_{ij}]+[D_j]+[D_{jk}]+[E_j] \\ \relax
%&& [D_i]+2[D_{ij}]+[D_j]+[E_i]+[E_j] & = & [D_k]+2[D_{kl}]+[D_l]+[E_k]+[E_l] 
%\end{array}$}
%\end{center}

%To simplify the notation we let $A_i=z^{[D_i]}$, $A_{ij}=z^{[D_{ij}]}$, $B_i=z^{[E_i]}$, $B_{ij}=z^{[E_{ij}]}$ and $C_{ik}=z^{[F_{ik}]}$.

\subsubsection{The mirror family}
In this case $\sX_{\pG_2}/B_{\pG_2}$ is given by 20 relative equations, determined by eight tag equations:
\begin{align*}
\theta_i\theta_j &= z^{D_{ij}}\left(\theta_{ij} + z^{E_{ij}}\right) \\
\theta_{ij}\theta_{jk} &= z^{D_j}\left(\theta_j + z^{E_j}\right)\left(\theta_j + z^{E_j+F_{jl}}\right)\left(\theta_j + z^{E_j+F_{jl}+F_{ik}}\right)
\end{align*}
where the monomials appearing in the equations are counting certain classes of rational curves on $Y_{\pG_2}$. More precisely, the first tag equation is derived from $\theta_i\theta_j=\sum_{m=1}^2z^{[\Sigma_m]}\theta_{ij}^{-D_{ij}\cdot\Sigma_m}$, where $[\Sigma_1]=[D_{ij}]$ and $[\Sigma_2]=[D_{ij}]+[E_{ij}]$ are the two classes of an effective rational curve $\Sigma\subset Y_{\pG_2}$ such that $\Sigma \cdot D_i=\Sigma \cdot D_j =1$, and $\Sigma\cdot D'=0$ for all other irreducible components in the boundary $D'\subset D$. The second tag equation comes from $\theta_{ij}\theta_{jk}=\sum_{m}z^{[\Sigma_m]}\theta_{j}^{-D_j\cdot\Sigma_m}$, where %$[\Sigma_3]=[D_j]$, $[\Sigma_2]=[D_j]+[E_j]$, $[\Sigma_1]=[D_j]+2[E_j]+[F_{jl}]$ and $[\Sigma_0]=[D_j]+3[E_j]+2[F_{jl}]+[F_{ik}]$ 
$[\Sigma_m]$ runs over the classes of an effective rational curve $\Sigma\subset Y_{\pG_2}$ such that $\Sigma \cdot D_{ij}=\Sigma \cdot D_{jk} =1$, and $\Sigma\cdot D'=0$ for all other irreducible components in the boundary $D'\subset D$.
As before, the other 14 equations defining $\sX_{\pG_2}$ can be found from the tag equations by working birationally. 

As explained in Remark~\ref{rmk!no-proper-proof}, to give a rigorous proof that this description holds we could use the machinery of scattering diagrams. However, we skip this since we are only interested in the existence of $X_{\pG_2}$ in order for us to use it as a key variety.

\subsubsection{The cluster variety} 
We write $N_1(Y)=\bm{D}\oplus\ZZ\langle\delta_{13},\delta_{24}\rangle$, according to the $\QQ$-basis $[D_1],\ldots,[D_{41}]$, $\delta_{13}$, $\delta_{24}$, where:
\[ \delta_{ik} = \tfrac13\left(2[D_i]+[D_j]+2[D_k]+[D_l]-2[F_{ik}]-[F_{jl}]\right) \]
The $\delta_{ik}$ were chosen so that
\begin{align*}
[E_i] &= [D_{jk}]+[D_{kl}]-[D_i]+\delta_{ik}, \\
[E_{ij}] &= [D_{jk}]+[D_k]+2[D_{kl}]+[D_l]+[D_{li}]-[D_{ij}],
\end{align*}
and therefore we can eliminate all of the coefficients $z^{E_i}$, $z^{E_{ij}}$ in a $\Dih_8$-equivariant way. Writing $A_i:=z^{D_i}$, $A_{ij}:=z^{D_{ij}}$, $\lambda_{ik}:=z^{\delta_{ik}}\left(1 + z^{F_{ik}} + z^{F_{ik}+F_{jl}}\right)$, and noting that
\[ 2\delta_{ik} = \delta_{jl}+[D_i]+[D_k]-[F_{ik}], \]
we recover our desired equations~\eqref{eqns!G2-format}. 
Since all powers are positive and integral we can easily extend all these equations to get an irreducible affine Gorenstein variety $X_{\pG_2}/\Aa^{10}$.

\subsubsection{Symmetries}
The cluster variety $X_{\pG_2}$ has the action of $\Dih_8\times\TT^8$, where $\Dih_8$ permutes the indices $\{1,2,3,4\}$. By calculating $D_i\cdot\delta_{ik}=1$ etc., we get the character table for the torus action $\TT^8\curvearrowright X_{\pG_2}$, as shown in Table \ref{table!char-G2}.
\begin{table}[h]
\caption{The character table for $\TT^D\curvearrowright X_{\pG_2}$.}\label{table!char-G2}
\begin{center}
\resizebox{\textwidth}{!}{
$\begin{array}{|c|cccccccc|cccccccccc|}\hline
 & \theta_1 & \theta_{12} & \theta_2 & \theta_{23} & \theta_3 & \theta_{34} & \theta_4 & \theta_{41} & A_1 & A_{12} & A_2 & A_{23} & A_3 & A_{34} & A_4 & A_{41} & \lambda_{13} & \lambda_{24} \\\hline
\chi_1    &  1 & 0 & 0 & 0 & 0 & 0 & 0 & 0 &-3 & 1 & 0 & 0 & 0 & 0 & 0 & 1 &-2 &-1 \\
\chi_{12} &  0 & 1 & 0 & 0 & 0 & 0 & 0 & 0 & 1 &-1 & 1 & 0 & 0 & 0 & 0 & 0 & 1 & 1 \\
\chi_2    &  0 & 0 & 1 & 0 & 0 & 0 & 0 & 0 & 0 & 1 &-3 & 1 & 0 & 0 & 0 & 0 &-1 &-2 \\
\chi_{23} &  0 & 0 & 0 & 1 & 0 & 0 & 0 & 0 & 0 & 0 & 1 &-1 & 1 & 0 & 0 & 0 & 1 & 1 \\
\chi_3    &  0 & 0 & 0 & 0 & 1 & 0 & 0 & 0 & 0 & 0 & 0 & 1 &-3 & 1 & 0 & 0 &-2 &-1 \\
\chi_{34} &  0 & 0 & 0 & 0 & 0 & 1 & 0 & 0 & 0 & 0 & 0 & 0 & 1 &-1 & 1 & 0 & 1 & 1 \\
\chi_4    &  0 & 0 & 0 & 0 & 0 & 0 & 1 & 0 & 0 & 0 & 0 & 0 & 0 & 1 &-3 & 1 &-1 &-2 \\
\chi_{41} &  0 & 0 & 0 & 0 & 0 & 0 & 0 & 1 & 1 & 0 & 0 & 0 & 0 & 0 & 1 &-1 & 1 & 1 \\\hline
\end{array}$}
\end{center}
\label{table!weight_table_for_G2}
\end{table}%

\subsection{Alternative formats for $X_{\pG_2}$}
We discuss some of the possible formats and useful subformats for $X_{\pG_2}$.

\subsubsection{Quadruple unprojection structure.}
Eliminating $\theta_{12},\theta_{23},\theta_{34},\theta_{41}$ from $\sA_{\pG_2}$ gives a Gorenstein projection $X_{\pG_2}\dashrightarrow Z$ where $Z$ is the following complete intersection of codimension 2:
\begin{align*} 
\theta_1\theta_3 &= A_{12}A_2A_{23}\theta_2 + \lambda_{24}A_{12}A_{23}A_{34}A_{41} + A_{34}A_4A_{41}\theta_4 \\
\theta_2\theta_4 &= A_{41}A_1A_{12}\theta_1 + \lambda_{13}A_{12}A_{23}A_{34}A_{41} + A_{23}A_3A_{34}\theta_3 
\end{align*}
This variety $Z$ is a family of affine surfaces over 
$\Aa^{10}_{A_i,A_{ij},\lambda_{ik}}$ whose general member has a compactification 
to a singular Del Pezzo surface with four lines at infinity meeting at four 
$\tfrac13(1,1)$ singularities, see \cite[\S2.2.1]{ch}. These four 
$\tfrac13(1,1)$ singularities are obtained by contracting the four $(-3)$-curves 
in $Y_{\pG_2}$. Each variable $\theta_{ij}$ can be recovered from $Z$ as a 
serial Gorenstein type I unprojection of the divisor 
$D_{ij}=V(A_{ij},\theta_k,\theta_l)$, giving the following codimension 3 
Pfaffian format $\Pf_4(M_{ij})=0$, where $M_{ij}$ is the matrix:
\begin{equation}\label{eq!G2-jerry}
 M_{ij} = \begin{pmatrix}
A_{jk}A_kA_{kl} & \theta_l & \theta_i & A_{ij} \\
& A_{li}(A_i\theta_i+\lambda_{ik}A_{jk}A_{kl}) & \theta_{ij} & \theta_j \\
& & A_{jk}(A_j\theta_j+\lambda_{jl}A_{kl}A_{li}) & \theta_k \\
& & & A_{kl}A_lA_{li}
\end{pmatrix}
\end{equation}
We note that $M_{ij}$ contains three unprojection divisors:
\begin{enumerate}
\item the Tom$_5$ ideal $(\theta_l,\theta_i,\theta_{ij},A_{jk})$ for the unprojection variable $\theta_{jk}$,
\item the Jer$_{24}$ ideal $(\theta_i,\theta_{ij},\theta_j,A_{kl})$ for the unprojection variable $\theta_{kl}$,
\item the Tom$_1$ ideal $(\theta_{ij},\theta_j,\theta_k,A_{li})$ for the unprojection variable $\theta_{li}$.
\end{enumerate} 
Taken altogether these variables give an unprojection cascade, partly shown in Figure~\ref{figure!g2-cascade}. 

\subsubsection{The $\pG_2^{(5)}$ and $\pG_2^{(4)}$ subformats}\label{sec!G2-subformats}
%The $\theta_{ij}$ are tag 1 variables with tag coefficient $A_{ij}$. Therefore, if $\phi^*(A_{ij})=1$ for some regular pullback $\phi$ from $X_{\pG_2}$, the variable $\theta_{ij}$ becomes \emph{redundant}.
It is clear from equations \eqref{eqns!G2-format} that if $\phi^*(A_{ij})=1$ for some regular pullback $\phi$ from $X_{\pG_2}$, then the variable $\theta_{ij}$ becomes \emph{redundant}.
\begin{defn}
We define the \emph{$\pG_2^{(5)}$ format of codimension~5} by making the specialisation $A_{12}=1$ and eliminating the redundant variable $\theta_{12}$. We define the \emph{$\pG_2^{(4)}$ format of codimension~4} by making the specialisation $A_{12}=A_{34}=1$ and eliminating the redundant variables $\theta_{12}$ and~$\theta_{34}$.\footnote{These could also be obtained in a fancy way, by considering the mirror family for a log Calabi--Yau surface $(Y,D)$ whose anticanonical cycle has negative intersection degrees $(2,2,1,3,1,3,1)$ or $(2,2,1,2,2,1)$ respectively.}
\end{defn}

\paragraph{$\pG_2^{(5)}$ format} This is a \emph{``triple Jerry''} format. In terms of the matrices $M_{ij}$ defined above, the 14 equations are:
\begin{gather*}
\Pf_4\Big(M_{23}|_{A_{12}=1}\Big)=0,\quad  \Pf_4\Big(M_{34}|_{A_{12}=1}\Big)=0,\quad  \Pf_4\Big(M_{41}|_{A_{12}=1}\Big)=0, \\
\theta_{23}\theta_{34} = (\text{long equation}),\quad \theta_{34}\theta_{41} = (\text{long equation}), \quad \theta_{41}\theta_{12} = (\text{long equation}).
\end{gather*}
If we wish to keep the variable $\theta_{12}$ with the equation $\theta_{12}= \theta_1\theta_2 - A_{23}A_3A_{34}^2A_4A_{41}$, then we call this $\pG_2^{(5)}+\CI^{(1)}$ format.

\paragraph{$\pG_2^{(4)}$ format} This is a \emph{``double Jerry''} format (cf.~\cite[\S9]{bkr}). The 9 equations are:
\[
\Pf_4(M_{23}|_{A_{12}=A_{34}=1})=0, \quad  \Pf_4(M_{41}|_{A_{12}=A_{34}=1})=0, \quad \theta_{23}\theta_{41}=(\text{long equation}).
%\scriptstyle{x_2x_4 - y_{12}y_{41}B_{12}B_{41} - y_{23}y_{34}B_{23}B_{34} - 
%y_{23}y_{41}\mu -  y_{23}A_2B_{23}\lambda -
%y_{41}A_4B_{41}\lambda - A_2A_4B_{12}B_{23}B_{34}B_{41}} \\
\]
If we wish to keep the variables $\theta_{12},\theta_{34}$ and their tag equations, then we call this $\pG_2^{(4)}+\CI^{(2)}$ format.

\subsection{Affine pieces and the deep locus} \label{sect!G2-deep}

Similarly to the $\pC_2$ cluster variety $X_{\pC_2}$, the $\pG_2$ cluster variety $X_{\pG_2}$ is partly covered by eight affine $\CI^{(6)}$ charts where one of each of the cluster variables $\theta_i$ or $\theta_{ij}$ does not vanish. The deep locus $X_0=X_{\pG_2}\cap V(\theta_1,\ldots,\theta_{41})$ breaks up into the following 28 linear subvarieties. 
\begin{align*}
\tag{$\times2$} \Aa^8 &\cong V(\theta_1,\ldots,\theta_{41},A_{ij},A_{kl}) \\
\tag{$\times8$} \Aa^7 &\cong V(\theta_1,\ldots,\theta_{41},A_i,A_{ij},A_{jk}) \\
\tag{$\times8$} \Aa^7 &\cong V(\theta_1,\ldots,\theta_{41},A_i,A_{jk},\lambda_{ik}) \\
\tag{$\times4$} \Aa^7 &\cong V(\theta_1,\ldots,\theta_{41},A_i,A_{jk},A_{kl}) \\
\tag{$\times4$} \Aa^7 &\cong V(\theta_1,\ldots,\theta_{41},A_{ij},A_k,A_l) \\
\tag{$\times2$} \Aa^6 &\cong V(\theta_1,\ldots,\theta_{41},A_i,A_k,\lambda_{ik},\lambda_{jl}) 
\end{align*}

\subsection{Regular pullbacks from $\pG_2$ format}
\label{eg!first-G2} 

Let $(X_{\pG_2},\mu,\FF)$ be a $\pG_2$ cluster format determined by the one parameter subgroup 
\[ \rho=(\rho_1,\rho_{12},\dots,\rho_4,\rho_{41})\colon\CC^*\to\TT^D. \]
From Table~\ref{table!weight_table_for_G2}:
\begin{gather*}
d(\theta_i)=\rho_{i},\quad d(\theta_{ij})=\rho_{ij},\quad d(A_i)=\rho_{li}-3\rho_i+\rho_{ij}, \quad d(A_{ij})=\rho_i-\rho_{ij}+\rho_j,\\
d(\lambda_{ik})= - 2\rho_i + \rho_{ij} - \rho_j + \rho_{jk} - 2\rho_k + \rho_{kl} - \rho_l + \rho_{li}.
\end{gather*}
We use the following shorthand to write down a regular pullback from $\pG_2$ format:
\[ \pG_2 \left(\begin{array}{cccc|cccc|cc}
\phi^*(\theta_{12}) & \phi^*(\theta_{23}) & \phi^*(\theta_{34}) & \phi^*(\theta_{41}) & \phi^*(\theta_1) & \phi^*(\theta_2) & \phi^*(\theta_3) & \phi^*(\theta_4) & \phi^*(\lambda_{13}) \\
\phi^*(A_{12}) & \phi^*(A_{23}) & \phi^*(A_{34}) & \phi^*(A_{41}) & \phi^*(A_1) & \phi^*(A_2) & \phi^*(A_3) & \phi^*(A_4) & \phi^*(\lambda_{24})
\end{array} \right) \]
or the same array with integer entries if we just wish to denote the degrees.
%\[\left(\begin{smallmatrix}d(x_1)&d(x_2)&d(x_3)&d(x_4)\\d(A_1)&d(A_2)&d(A_3)&d(A_4)\end{smallmatrix}
%\Bigl\vert\begin{smallmatrix}d(y_{12})&d(y_{23})&d(y_{34})&d(y_{41})\\d(B_{12})&d(B_{23})&d(B_{34})&d(B_{41})\end{smallmatrix}
%\Bigl\vert\begin{smallmatrix}d(\lambda)\\ d(\mu)\end{smallmatrix}\right).\]

As with the regular pullbacks from $X_{\pC_2}$, it is easy to use the $M$-graded Hilbert series of $X_{\pG_2}$ to get the $\ZZ$-graded Hilbert series of $(X_{\pG_2},\mu,\FF)$. Again, the Hilbert numerator has adjunction number $\alpha=\rho_1+\rho_{12}+\rho_2+\rho_{23}+\rho_3+\rho_{34}+\rho_4+\rho_{41}$.
%\begin{align*}
%1&-t^{\rho_1+\rho_2}-t^{\rho_1+\rho_3}-t^{\rho_1+\rho_4}-t^{\rho_1+\rho_7}-t^{\rho_1+\rho_8}
%-t^{\rho_2+\rho_3}-t^{\rho_2+\rho_4}-t^{\rho_2+\rho_5}-t^{\rho_2+\rho_8}-t^{\rho_3+\rho_4}\\
% &-t^{\rho_3+\rho_5}-t^{\rho_3+\rho_6}-t^{\rho_4+\rho_6}-t^{\rho_4+\rho_7}-t^{\rho_5+\rho_6}
%-t^{\rho_5+\rho_7}-t^{\rho_5+\rho_8}-t^{\rho_6+\rho_7}-t^{\rho_6+\rho_8}-t^{\rho_7+\rho_8}
% \\
%&+\dots+t^{\alpha},
%\end{align*}

\subsection{Singular locus} As we did with the $\pC_2$ cluster variety $X_{\pC_2}$ we now describe the singular locus of $X_{\pG_2}$ and some of the hyperplane sections of $X_{\pG_2}$.

\begin{lem}\label{lemma!G2_singular_locus}
The reduced singular locus $\sing\left(X_{\pG_2}\right)$ is contained inside the deep locus
\[ \sing\left(X_{\pG_2}\right) \subset X_0 := X_{\pG_2}\cap V(\theta_1,\theta_{12},\theta_2,\theta_{23},\theta_3,\theta_{34},\theta_4,\theta_{41}) \]
and decomposes into 14 irreducible linear subvarieties, given by:
\begin{align*}
\Aa^8 &= X_0\cap V(A_{ij},A_{kl}), &&& \Aa^7 &= X_0\cap V(A_i,A_{jk},A_{kl}), \\
\Aa^5 &= X_0\cap V(A_i,A_{ij},A_{jk},A_k,\lambda_{ik}), &&& \Aa^5 &= X_0\cap V(A_i,A_j,A_{kl},\lambda_{ik},\lambda_{jl}). 
\end{align*}
In particular all components of the singular locus have codimension $\geq4$ in $X_{\pG_2}$.

The hyperplane section $X^z=X_{\pG_2}\cap V(z)$ is singular in codimension 1 if 
$z=\theta_i$ or $z=\theta_{ij}$.
In other cases,
\begin{enumerate}
\item $\sing\left(X^{A_i}\right)$ is contained in the locus $X^{A_i}_0 := V_{X^{A_i}}(\theta_{ij},\theta_j,\theta_{jk},\theta_k,\theta_{kl},\theta_l,\theta_{li})$ and contains the following components which have codimension $3$ in $X^{A_i}$:
\[ \Aa^8 = X^{A_i}_0\cap V(A_{jk},A_{kl}),  \quad
\Aa^8 = X^{A_i}_0\cap V(A_{jk},\lambda_{ik}) \quad \text{and} \quad
\Aa^8 = X^{A_i}_0\cap V(A_{kl},\lambda_{ik}), \]
\item $\sing\left(X^{A_{ij}}\right)$ is contained in the locus $X^{A_{ij}}_0 := V_{X^{A_{ij}}}(\theta_j,\theta_{jk},\theta_k,\theta_{kl},\theta_l,\theta_{li},\theta_i)$ and contains the following components which have codimension $\leq3$ in $X^{A_{ij}}$:
\[ \Aa^9=X^{A_{ij}}_0\cap V(A_{kl}) \quad \text{and} \quad 
\Aa^8=X^{A_{ij}}_0\cap V(A_{jk},A_{li}). \]
\end{enumerate}
\end{lem}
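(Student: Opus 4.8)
The plan is to follow the proof of Lemma~\ref{lemma!C2_singular_locus}, the extra difficulty being the size of $X_{\pG_2}$: codimension~$6$ with $20$ relations and $64$ syzygies. First I would work chart by chart using the partial covering of Lemma~\ref{lem!partial-covering}. On each open chart $U_i=X_{\pG_2}\cap(\theta_i\neq 0)$ or $U_{ij}=X_{\pG_2}\cap(\theta_{ij}\neq0)$ the $20$ equations reduce to a complete intersection $\CI^{(6)}$, so the Jacobian criterion is easy to apply and one checks directly that these charts are smooth. By the $\Dih_8$-symmetry it is enough to treat one $\theta_i$-chart and one $\theta_{ij}$-chart. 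This already proves $\sing(X_{\pG_2})\subseteq X_0$.

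Next I would restrict the $20$ equations of~\eqref{eqns!G2-format} to the deep locus $X_0=X_{\pG_2}\cap V(\theta_1,\ldots,\theta_{41})$. Setting the cluster variables to zero kills almost every term, and the surviving relations become monomial in the coefficients $A_i,A_{ij},\lambda_{ik}$; reading these off (or matching against the toric base $B_{\pG_2}$) recovers the $28$ linear strata listed in \S\ref{sect!G2-deep}. On each stratum --- and up to $\Dih_8$ there are only a few genuinely different ones --- I would compute the rank of the Jacobian of $X_{\pG_2}$ at a generic point and identify precisely where it falls below~$6$. This isolates the $14$ components in the statement and shows the remaining strata are smooth points of $X_{\pG_2}$. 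In practice these rank computations are carried out with Macaulay2 or Magma.

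For the hyperplane sections the cases $z=\theta_i$ and $z=\theta_{ij}$ are handled by the non-normality argument used in the $\pC_2$ case: when $\theta_{ij}=0$ the tag equations force two explicit fractional expressions to be roots of a common monic cubic over $\sO_{X^{\theta_{ij}}}$ (the $\pG_2$ analogue of $T^2+\lambda T + A_1A_2A_3$), so this ring is not integrally closed; being Gorenstein, hence $S_2$, $X^{\theta_{ij}}$ must be singular in codimension~$1$, and likewise for $X^{\theta_i}$. For $z=A_i$ and $z=A_{ij}$ one passes to the hyperplane section and repeats the chart-by-chart Jacobian analysis; here we only need to check that the listed linear subvarieties lie in the singular locus and to compute their codimension, which the $\CI^{(6)}$ charts and the description of $X_0$ make straightforward.

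The main obstacle is computational rather than conceptual: a direct primary decomposition of the Jacobian ideal of a codimension-$6$ Gorenstein variety with $20$ generators is not feasible. The point of the argument is to localise first (Lemma~\ref{lem!partial-covering}) and then stratify by vanishing of the $\theta$'s before taking ranks, so that everything reduces to a bounded list of small explicit affine pieces on which the $\Dih_8$-symmetry removes almost all of the redundancy.
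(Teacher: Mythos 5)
Your proposal follows the same route as the paper's proof: use the partial covering of Lemma~\ref{lem!partial-covering} to see that the $\CI^{(6)}$ charts are smooth (so $\sing(X_{\pG_2})\subset X_0$), stratify $X_0$ into the $28$ linear pieces from \S\ref{sect!G2-deep} and test the restricted (sparse) Jacobian by computer on each, use the integral-closure/$S_2$ argument to show $X^{\theta_i}$ and $X^{\theta_{ij}}$ are singular in codimension one, and verify the claimed components of $\sing(X^{A_i})$, $\sing(X^{A_{ij}})$ directly. The only small imprecision is that for $X^{\theta_1}$ the paper exhibits a monic quadratic (not cubic) satisfied by $\tfrac{A_{12}A_2\theta_2}{A_{34}}$, while the cubic appears only for $X^{\theta_{12}}$, but this does not affect the argument.
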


%\StephenComment{TODO: Check this and tighten 1st paragraph of proof}

\begin{proof}
This is slightly more delicate than the computation of $\sing(X_{\pC_2})$, since asking the computer to compute the $6\times 6$ minors of the $18\times 20$ Jacobian matrix $J$ is fairly hopeless. First of all, if one of the cluster variables $\theta_i$ or $\theta_{ij}$ is nonzero we are in one of the affine complete intersection charts of Lemma \ref{lem!partial-covering} and it is easy to check that these are smooth. Therefore $\sing(X_{\pG_2})$ is contained in the deep locus $\sing(X_{\pG_2})\subseteq X_0$. Let $\Pi$ be one of the 28 irreducible components of $X_0$ listed in \S\ref{sect!G2-deep}, and take the restriction $J|_\Pi$. It turns out that $J|_{\Pi}$ is rather sparse, and it is then much easier to compute $\sing(X_{\pG_2})|_{\Pi}$ for each $\Pi$. Finally we take the union of all of these singular subloci and compute the irreducible components of this union. We see that $\sing(X_{\pG_2})$ has the 14 irreducible components above. 

The singular loci of $X^{A_i}$ and $X^{A_{ij}}$ can be computed in a similar way (with the appropriate adjustments to $X_0$), although it is easier just to check the inclusion of the components claimed in the statement of the Proposition directly.

If $\theta_{12}=0$, then $T=\frac{A_1\theta_1}{A_{23}A_{34}}$ is a solution to the monic polynomial equation 
\[ T^3 + \mu T^2 + \lambda A_1A_3 T + A_1^2A_2A_3^2A_4 = 0 \]
over the ring $\sO_{X^{\theta_{12}}}$, and hence $\sO_{X^{\theta_{12}}}$ is not integrally closed. Moreover, since $\theta_{12}$ is not a zero divisor in $\sA_{\pG_2}$ we know that $X^{\theta_{12}}$ is Gorenstein (hence $S_2$) and therefore must be singular in codimension 1. By a similar argument, because $U=\frac{A_{12}A_2\theta_2}{A_{34}}$ solves the monic equation
\[U^2 + \lambda_{24}A_{41}A_{12}U + \lambda_{13}A_2A_4A_{12}^2A_{41}^2 + A_2A_3A_4A_{12}A_{41}\theta_3=0 \]
over the ring $\sO_{X^{\theta_{1}}}$, $X^{\theta_1}$ is also singular in codimension 1.
\end{proof}

\subsection{Quasismoothness conditions}
Let $\widehat Y\subset \Aa^{10}$ be the 4-dimensional affine cone over a quasismooth weighted projective 3-fold $Y$.

\begin{prop}\label{proposition!G26_subformats}
Suppose that $\widehat Y = \phi^{-1}(X_{\pG_2})$ is a regular pullback and is not a complete intersection. Then, for any $i,j$, we must have $d(\theta_i)>0$, $d(\theta_{ij})>0$, and one of the following conditions must hold, up to $\Dih_8$ symmetry:
\begin{enumerate} \setlength\itemsep{0.5em}
\item {\bf $\pG_2^{(6)}$ format}: $d(A_{ij})>0$ for all $i,j$. Then $d(A_i)\ge0$ for all $i$ and $d(\lambda_{13}),d(\lambda_{24})\ge0$.

\item {\bf $\pG_2^{(5)}+\CI^{(1)}$ format}: $d(A_{12})=0$ and $d(A_{ij})>0$ for all other $i,j$. Then $d(A_1),d(A_2)\ge0$ and $d(\lambda_{13}),d(\lambda_{24})\ge0$. (See Corollary \ref{proposition!G25_subformats} for further analysis.)

\item {\bf $\pG_2^{(4)} + \CI^{(2)}$ format}: $d(A_{12})=d(A_{34})=0$ and $d(A_{23}),d(A_{41})>0$. (See Corollary \ref{proposition!G24_subformats} for further analysis.)

\item {\bf $\pC_2 + \CI^{(2)}$ format}: $d(A_{12})=d(A_{23})=0$ and $d(A_{34}),d(A_{41})>0$. (See Proposition \ref{proposition!C2_subformats}.)

\item {\bf $\pA_2 + \CI^{(3)}$ format}: $d(A_{12})=d(A_{23})=d(A_{34})=0$ and $d(A_{41})>0$.
\end{enumerate}
If all four $d(A_{ij})=0$ then we are in $\CI^{(6)}$ format. In other words, we consider cases according to the following cascade of specialisations (cf.\ Figure~\ref{figure!g2-cascade}, page \pageref{figure!g2-cascade}):
\begin{center}\begin{tikzpicture}
  \node at (0,1) {$\pG_2$};
  \node at (3,1) {$\pG_2^{(5)}+\CI^{(1)}$};
  \node at (6,2) {$\pG_2^{(4)}+\CI^{(2)}$};
  \node at (6,0) {$\pC_2+\CI^{(2)}$};
  \node at (9,1) {$\pA_2+\CI^{(3)}$};
  \node at (12,1) {$\CI^{(6)}$};
  \draw[->] (0.5,1) -- node [above] {\footnotesize $A_{12}=1$} (1.8,1);
  \draw[->] (4.2,1.2) -- node [above,xshift=-0.5cm] {\footnotesize $A_{34}=1$} (4.8,1.8);
  \draw[->] (4.2,0.8) -- node [below,xshift=-0.5cm] {\footnotesize $A_{23}=1$} (4.8,0.2);
  \draw[->] (7.2,1.8) -- node [above,xshift=0.5cm] {\footnotesize $A_{23}=1$} (7.8,1.2);
  \draw[->] (7.2,0.2) -- node [below,xshift=0.5cm] {\footnotesize $A_{34}=1$} (7.8,0.8);
  \draw[->] (10.2,1) -- node [above] {\footnotesize $A_{41}=1$} (11.2,1);
\end{tikzpicture}\end{center}
(\emph{Note}: After making the specialisation we do not automatically assume 
that the redundant variables are eliminated.)
\end{prop}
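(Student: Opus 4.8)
The plan is to mimic closely the proof of Proposition~\ref{proposition!C2_subformats}, using the singularity avoidance lemma (Lemma~\ref{lemma!singularity-avoidance}) together with the singular locus computations of Lemma~\ref{lemma!G2_singular_locus}. First I would dispose of the preliminary claims: if $d(\theta_i)=0$ or $d(\theta_{ij})=0$ for some index, then $\phi^*$ of that cluster variable is a nonzero constant and we can use the corresponding tag equation to eliminate a generator, collapsing the format to a complete intersection; so we may assume $d(\theta_i),d(\theta_{ij})\ne 0$ for all $i,j$. Next, the key negativity claims: $d(\theta_i)<0$ and $d(\theta_{ij})<0$ cannot occur, because then $\phi$ factors through $X^{\theta_i}$ or $X^{\theta_{ij}}$, which by Lemma~\ref{lemma!G2_singular_locus} is singular in codimension $1$, and by Lemma~\ref{lemma!singularity-avoidance}(1),(3) this forces $\widehat Y$ to be singular away from the cone point, contradicting quasismoothness. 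Similarly $d(A_{ij})<0$ is impossible: $\phi$ would factor through $X^{A_{ij}}$, whose singular locus (Lemma~\ref{lemma!G2_singular_locus}(2)) has a component of codimension $\le 3$ inside a complete intersection, so by Lemma~\ref{lemma!singularity-avoidance}(3) some generator of that ideal — one of $A_{kl}$, or $A_{jk}$ and $A_{li}$ — must have degree $0$, which pushes us into one of the listed specialisations but with a zero entry appearing in one of the Pfaffian matrices $M_{ij}$ of \eqref{eq!G2-jerry}, so $\widehat Y$ fails to be quasismooth by \cite[Proposition 2.7]{bkz}. Hence all $d(A_{ij})\ge 0$.

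The case division is then purely combinatorial in the number $t\in\{0,1,2,3,4\}$ of indices with $d(A_{ij})=0$, and the $\Dih_8$ action on the square of cluster variables is transitive enough that, up to symmetry, we need only record the relative position of the vanishing edges. For $t=0$ we are in case (1); for $t=1$, say $d(A_{12})=0$, we are in case (2); for $t=2$ the two zero edges are either opposite ($\{A_{12},A_{34}\}$, case (3)) or adjacent ($\{A_{12},A_{23}\}$, case (4)) — these are the only two $\Dih_8$-orbits of pairs of edges of the $4$-cycle; for $t=3$, the complement is a single edge, giving case (5); and $t=4$ gives $\CI^{(6)}$. In each case one must argue that the nonnegativity of the remaining coefficients and parameters stated in the Proposition is forced. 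This is where the relations among characters from Table~\ref{table!weight_table_for_G2} come in: for instance $2d(\lambda_{ik})=d(\lambda_{jl})+d(A_i)+d(A_k)-d(F_{ik})$-type identities (or more simply the explicit formula $d(\lambda_{ik})=-2\rho_i+\rho_{ij}-\rho_j+\rho_{jk}-2\rho_k+\rho_{kl}-\rho_l+\rho_{li}$) combined with the $d(A_i)$ and $d(A_{ij})$ formulas let one deduce, as in Claims 2 and 3 of the $\pC_2$ proof, that if some $d(A_i)<0$ or $d(\lambda_{ik})<0$ then $\phi$ factors through $X^{A_i}$ and avoiding the codimension-$3$ bad components of Lemma~\ref{lemma!G2_singular_locus}(1) forces further degree-$0$ specialisations, ultimately producing a zero entry in some $M_{ij}$ and killing quasismoothness. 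So in each surviving case all the coefficients and parameters claimed to be $\ge 0$ are indeed $\ge 0$.

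I expect the main obstacle to be the bookkeeping in case (1) ($\pG_2^{(6)}$ format) and case (2): showing that $d(A_i)\ge 0$ for all $i$ and $d(\lambda_{13}),d(\lambda_{24})\ge 0$. Unlike $\pC_2$, there are now two parameters whose degrees are coupled to each other and to four coefficient degrees through the identities coming from the relations $2\delta_{ik}=\delta_{jl}+[D_i]+[D_k]-[F_{ik}]$ (so $2d(\lambda_{ik})\le d(\lambda_{jl})+d(A_i)+d(A_k)$, with equality unless $z^{F_{ik}}$ contributes), so the argument that rules out $d(\lambda_{ik})<0$ is more intricate: one must chase through which of $X^{A_i}$, $X^{A_k}$ the morphism factors through and check that every way of avoiding the codimension-$3$ bad loci of Lemma~\ref{lemma!G2_singular_locus}(1) either lands in a later case of the cascade or forces a zero Pfaffian entry. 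The finitely many sub-cases can be organised by noting that the bad components of $\sing(X^{A_i})$ are cut out by $\{A_{jk},A_{kl}\}$, $\{A_{jk},\lambda_{ik}\}$ or $\{A_{kl},\lambda_{ik}\}$ respectively, so avoiding all three forces $d(A_{jk})=0$ or $d(A_{kl})=0$, which is exactly a transition to a higher-$t$ case; this gives the cascade structure asserted in the statement. Everything else is routine linear algebra over $\ZZ$ and an appeal to \cite[Proposition 2.7]{bkz} for the quasismoothness obstruction from vanishing Pfaffian entries.
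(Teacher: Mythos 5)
Your proposal follows the paper's strategy: dispose of $d(\theta)\le 0$ using Lemma~\ref{lemma!G2_singular_locus} and Lemma~\ref{lemma!singularity-avoidance}, rule out $d(A_{ij})<0$, and organise the cases by how many (and which, up to $\Dih_8$-symmetry) of the $A_{ij}$ have degree zero. That combinatorial skeleton is exactly right, as is the observation that the bad codimension-$3$ components of $\sing(X^{A_i})$ are cut out by pairs from $\{A_{jk},A_{kl},\lambda_{ik}\}$, so that avoiding them forces a transition to higher $t$ in the cascade. Two places need repair, though.

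First, for the key Claim that $d(A_{ij})<0$ is impossible, the paper does \emph{not} run a ``zero Pfaffian entry'' argument: it forces $d(A_{34})=0$ and one of $d(A_{23})=0,\,d(A_{41})=0$ from Lemma~\ref{lemma!G2_singular_locus}(2), lands in case~(4) after a $\Dih_8$ relabelling, rewrites $\widehat Y$ as a $\pC_2+\CI^{(2)}$ regular pullback via the explicit coordinate change at the end of the proof, and then invokes Proposition~\ref{proposition!C2_subformats} to conclude that the relevant $\pC_2$-coefficient cannot have negative degree. Your shortcut --- that $\phi^*(A_{12})=0$ produces a zero entry in some $M_{ij}$ of~\eqref{eq!G2-jerry}, killing quasismoothness via \cite[Proposition 2.7]{bkz} --- has a genuine gap: unlike the codimension-$3$ situation in $\pC_2$ Claim~1, where the $\pA_2$ variety is cut out by a single $5\times 5$ Pfaffian matrix, in the $\pG_2^{(4)}$ picture $\widehat Y$ is cut out by \emph{two} matrices plus the long equation, so a zero entry in one $M_{ij}$ does not by itself place $\widehat Y$ in the scope of \cite[Proposition 2.7]{bkz}. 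This is precisely why the paper routes the argument through the $\pC_2$ proposition rather than appealing to a Pfaffian-entry criterion directly.

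Second, a small but conceptual slip: the relation $2d(\lambda_{ik})=d(\lambda_{jl})+d(A_i)+d(A_k)$ is an exact identity following from Table~\ref{table!weight_table_for_G2} (equivalently, from the identities~\eqref{eqn!lambda-mu}); it is not an inequality that depends on whether $z^{F_{ik}}$ ``contributes''. The degrees $d(\cdot)$ are linear functionals of $\rho$ read off the character table and have nothing to do with the enumerative data that originally produced $\lambda_{ik}$. The paper uses the exact identities to show that $d(\lambda_{13})<0$ forces some $d(A_i)<0$, which then cascades to case~(4); your phrasing would weaken that deduction unnecessarily. With these two points corrected, the remaining bookkeeping you sketch for cases~(1) and~(2) matches the paper's Claims~3 and~4.
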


\begin{proof}
There are two things to prove. First that the claimed inequalities on the degrees are necessary in each case, and second that $\pC_2$ and $\pA_2$ format appear as claimed in case (4) and case (5) respectively.

We obtain the degree inequalities in each case by considering what happens if one of the variables is allowed to take a negative degree. Up to the $\Dih_8$ symmetry we can reduce to one of the following cases:

{\bf Claim 1}: $d(\theta_1)<0$ and $d(\theta_{12})<0$ cannot happen. 

Using Lemma \ref{lemma!G2_singular_locus}, we see that $d(\theta_1)\ge0$ and $d(\theta_{12})\ge0$. A single equality $d(\theta_1)=0$ or $d(\theta_{12})=0$ would reduce $\widehat Y$ to a complete intersection $\CI^{(6)}$ by Lemma \ref{lem!partial-covering}. Thus from now on, we assume $d(\theta_i)>0$, $d(\theta_{ij})>0$. 

{\bf Claim 2}: $d(A_{12})<0$ cannot happen. 

If $d(A_{12})<0$ then we must have $d(A_{34})=0$ and either $d(A_{23})=0$ or $d(A_{41})=0$ to avoid pulling back the two bad components of Lemma~\ref{lemma!G2_singular_locus}(2). This puts us in case (4), which is $\pC_2^{(4)}+\CI^{(2)}$ format. Proposition \ref{proposition!C2_subformats} combined with the coordinate change described below, implies that $d(A_{12})$ can not be negative. 

{\bf Claim 3}: $d(A_1)<0$ puts us in case (2).

If $d(A_1)<0$ we need either $d(A_{23})=0$ or $d(A_{34})=0$, which puts us in case (2). Assume the former case, then if one of $d(A_2)<0$ or $d(A_3)<0$, this forces one of $d(A_{34})=0$, $d(A_{41})=0$ or $d(A_{12})=0$, and so we are either in case (3) or case (4).

{\bf Claim 4}: $d(\lambda_{13})<0$ puts us in case (4). 

Table~\ref{table!weight_table_for_G2} can be used to obtain the following identities:
\begin{equation} \label{eqn!lambda-mu} \tag{$\dagger$} \begin{split}
3d(\lambda_{13}) &= 2d(A_1)+d(A_2)+2d(A_3)+d(A_4)\\ 
3d(\lambda_{24}) &= d(A_1)+2d(A_2)+d(A_3)+2d(A_4)\\
d(\lambda_{13})+d(\lambda_{24}) &= d(A_1)+d(A_2)+d(A_3)+d(A_4)
\end{split} \end{equation}
If $d(\lambda_{13})<0$ the first of these implies that $d(A_i)<0$ for some $i$---without loss of generality either $A_1$ or $A_2$. 

If $d(A_1)<0$ then, to avoid pulling back the big components of $\sing(X^{A_1})$, we need 
\[ \label{eqn!dBij} \tag{$\ddagger$} 
{\small \big(d(A_{23})=0 \text{ or }d(A_{34})=0\big)\text{ and }\big(d(A_{23})=0\text{ or }d(\lambda_{13})=0\big) \text{ and }\big(d(A_{34})=0\text{ or }d(\lambda_{13})=0\big).} \]
Since $d(\lambda_{13})<0$, this implies $d(A_{23})=d(A_{34})=0$ and we are in case (4).

If $d(A_2)<0$ then, to avoid pulling back the big components of $\sing(X^{A_2})$, we need
\[ \big(d(A_{34})=0 \text{ or }d(A_{41})=0\big)\text{ and }\big(d(A_{41})=0\text{ or }d(\lambda_{24})=0\big) \text{ and }\big(d(A_{34})=0\text{ or }d(\lambda_{24})=0\big). \]
If $d(A_{34})=d(A_{41})=0$ then we go to case (4), otherwise $d(\lambda_{24})=0$ and the relations \eqref{eqn!lambda-mu} imply $d(A_1)+d(A_3)=2d(\lambda_{13})<0$, so either $d(A_1)<0$ or $d(A_3)<0$. This again forces two consecutive $A_{ij}$ to have degree zero, and we go to case (4). 

This completes our rough analysis of the admissible degrees in cases (1)--(5). We now show that $\pC_2$ format and $\pA_2$ format appear in cases (4) and (5). By definition, case (2) is $\pG_2^{(5)}$ format and case (3) is $\pG_2^{(4)}$ format.
%\TomComment{It is completely clear in case (3) that we get $\pG_2^{(4)}$ straight from the definition, so I think we don't need to say anything more about it. Right?}
%Case (3): Pulling back $X_{\pG_2}$ by the morphism $\phi\colon \Aa^{16}\to 
%\Aa^{18}$ given by $\phi^*(A_i) = 1$ for $i=1,2$ and $\phi^*(z)=z$ for all other 
%variables, gives $\pG_2^{(4)}$ as described in \ref{}. The analysis of weights 
%and subformats is carried out in Proposition \ref{proposition!G24_subformats}.

{\bf Case (4) is $\pC_2+\CI^{(2)}$ format}: Pulling back $X_{\pG_2}$ by the morphism $\phi\colon \Aa^{16}\to \Aa^{18}$ given by $\phi^*(A_{12}) = \phi^*(A_{23}) = 1$ and $\phi^*(z)=z$ for all other variables, gives a complete intersection of codimension two
\[ \theta_{12} = \theta_1\theta_2 - A_3A_{34}^2A_4A_{41}, \quad \theta_{23} = \theta_2\theta_3 - A_{34}A_4A_{41}^2A_1 \]
inside the following generic pullback from $\pC_2$ format:
%\[\left(\begin{smallmatrix}x_1&x_2&x_3\\A_1&A_2&A_3\end{smallmatrix}
%\Bigl\vert\begin{smallmatrix}y_{12}&y_{23}&y_{31}\\B_{12}&B_{23}&B_{31}\end{smallmatrix}
%\Bigl\vert\begin{smallmatrix}\lambda\end{smallmatrix}\right)
%\mapsto
%\left(\begin{smallmatrix}x_3&x_4&y_{12}\\A_3&A_4&B_{12}\end{smallmatrix}
%\Bigl\vert\begin{smallmatrix}y_{34}&y_{41}&y_{23}\\B_{34}y_{34}+\lambda&B_{41}&B_{23}\end{smallmatrix}
%\Bigl\vert\begin{smallmatrix}\mu\end{smallmatrix}\right).\]
\[ \pC_2\left(\begin{array}{ccc|ccc|}
 \theta_{41} & \theta_2 & \theta_{34} & \theta_1 & \theta_3 & \theta_4 \\
  A_{41} & A_2 & A_{34} & A_1 & A_3 & A_4\theta_4+\lambda_{24}
\end{array} \:\: \lambda_{13} \right), \]
i.e.\ inside $\Aa^2_{\theta_{12},\theta_{23}}\times X_{\pC_2}$. Under this coordinate change, the degrees of all variables must satisfy the conditions of Proposition \ref{proposition!C2_subformats}.

%t1*t4 = A41 * t41 + A2 * A34 * A3,
%t41*t2 = t1^2*A1 + t1*A34*l13 + A3*A34^2*(A4*t4 + l24),
%t1*t3 = t2 * A2 + A41*A34*(A4*t4 + l24),
%t2*t34 = t3^2*A3 + t3*A41*l13 + A1*A41^2*(A4*t4 + l24),
%t3*t4 = A34 * t34 + A1 * A41 * A2,
%t41*t34 = (A4*t4 + l24) * t4^2 + t4*A2*l13 + A1*A2^2*A3

{\bf Case (5) is $\pA_2+\CI^{(3)}$ format}: Pulling back $X_{\pG_2}$ by the morphism $\phi\colon \Aa^{15}\to \Aa^{18}$ given by $\phi^*(A_{12}) = \phi^*(A_{23}) = \phi^*(A_{34}) = 1$ and $\phi^*(z)=z$ for all other variables, gives:
\[ \Pf_4\begin{pmatrix}
A_2 & \theta_3 & \theta_4 & A_{41} \\
& A_4\theta_4+\lambda_{24} & \theta_{41} & \theta_1 \\
& & A_1\theta_1+\lambda_{13} & \theta_2 \\
& & & A_3
\end{pmatrix} \qquad
 \begin{array}{rcl}
\theta_{12} \!\!&=&\!\! \theta_1\theta_2 - A_3A_4A_{41},\\
\theta_{23} \!\!&=&\!\! \theta_2\theta_3 - A_4A_{41}^2A_1,\\
\theta_{34} \!\!&=&\!\! \theta_3\theta_4 - A_{41}A_1A_2.
\end{array}\]
If $\widehat Y$ is quasismooth and not a complete intersection, then the entries of 
this matrix must all have positive degrees \cite[Proposition 2.7]{bkz}.
\end{proof}

\subsubsection{Some further subformats}
Let $\widehat Y \subset \Aa^{10}$ be the 4-dimensional affine cone over a 
quasismooth weighted projective 3-fold $Y$. We refine cases (2) and (3) of Proposition \ref{proposition!G26_subformats}:

\begin{cor}[Subformats for $\pG_2^{(5)}$]\label{proposition!G25_subformats}
Suppose that $\widehat Y$ is in $\pG_2^{(5)}$ format, i.e.\ $d(A_{12})=0$, all other $d(A_{ij})>0$, $d(A_1),d(A_2)\ge0$ and $d(\lambda_{13}),d(\lambda_{24})\ge0$. There are three possibilities:
\begin{enumerate}
 \item $d(A_3),d(A_4)\ge0$.
 \item $d(A_3)<0$ and $d(\lambda_{13})=0$.
 \item $d(A_3),d(A_4)<0$ and $d(\lambda_{13})=d(\lambda_{24})=0$.
\end{enumerate}
\end{cor}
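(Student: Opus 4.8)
The plan is to rerun the singularity-avoidance argument of Proposition~\ref{proposition!G26_subformats} one more time, now on the hyperplane sections $X^{A_3}$ and $X^{A_4}$ of $X_{\pG_2}$, remembering that in $\pG_2^{(5)}$ format the specialisation $A_{12}=1$ (equivalently, $\phi^*(A_{12})$ is a nonzero constant) has already been made. By Proposition~\ref{proposition!G26_subformats} we already have $d(\theta_i)>0$, $d(\theta_{ij})>0$ for all $i,j$, together with $d(A_{12})=0$, $d(A_{ij})>0$ for $\{i,j\}\neq\{1,2\}$, $d(A_1),d(A_2)\ge0$ and $d(\lambda_{13}),d(\lambda_{24})\ge0$; the only degrees still to be pinned down are $d(A_3)$ and $d(A_4)$. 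If both are $\ge0$ we are in case (1). Otherwise, using the residual symmetry of $\pG_2^{(5)}$ format --- the stabiliser in $\Dih_8$ of the edge $\{1,2\}$, the reflection which swaps $3\leftrightarrow4$ and $\lambda_{13}\leftrightarrow\lambda_{24}$ while fixing $A_{12}$ and the hypotheses above --- we may assume $d(A_3)<0$.

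First I would observe that $d(A_3)<0$ forces $\phi^*(A_3)=0$, so $\phi$ factors as a regular pullback from $X^{A_3}$, and then invoke Lemma~\ref{lemma!singularity-avoidance}(3): since $\widehat Y$ is the cone over a quasismooth $3$-fold, any component of $\sing(X^{A_3})$ which is a complete intersection of codimension less than $10$ in the ambient affine space must have a defining coordinate of degree $0$ with nonzero pullback. By Lemma~\ref{lemma!G2_singular_locus}(1), with the $\Dih_8$-permutation $(i,j,k,l)=(3,4,1,2)$, the relevant components are the linear subspaces
\[ X^{A_3}_0\cap V(A_{41},A_{12}),\quad X^{A_3}_0\cap V(A_{41},\lambda_{13}),\quad X^{A_3}_0\cap V(A_{12},\lambda_{13}), \]
where $X^{A_3}_0=V_{X^{A_3}}(\theta_{34},\theta_4,\theta_{41},\theta_1,\theta_{12},\theta_2,\theta_{23})$, each of codimension $3$ in $X^{A_3}$ and hence of codimension less than $10$ in the ambient affine space. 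Because $\phi^*(A_{12})$ is a nonzero constant, the first and third of these pull back to $\emptyset$ automatically; for the middle one every one of $\theta_{34},\theta_4,\theta_{41},\theta_1,\theta_{12},\theta_2,\theta_{23},A_{41}$ has strictly positive degree, so the only way to avoid pulling it back is $d(\lambda_{13})=0$. This is case (2).

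If moreover $d(A_4)<0$, I would repeat the argument verbatim with $X^{A_4}$ and the permutation $(i,j,k,l)=(4,1,2,3)$: the components $X^{A_4}_0\cap V(A_{12},A_{23})$ and $X^{A_4}_0\cap V(A_{12},\lambda_{24})$ are again killed by $\phi^*(A_{12})\neq0$, while avoiding $X^{A_4}_0\cap V(A_{23},\lambda_{24})$ forces $d(\lambda_{24})=0$, as $d(A_{23})>0$. This yields case (3). I do not expect any genuine obstacle here --- it is a direct rerun of Claims 3 and 4 in the proof of Proposition~\ref{proposition!G26_subformats} --- and the only point worth flagging is that, precisely because $A_{12}$ has been specialised to $1$, every bad component of $\sing(X^{A_3})$ or $\sing(X^{A_4})$ meeting $V(A_{12})$ is automatically harmless, so in each case the sole surviving constraint is the one coming from the component containing $\lambda_{13}$ (respectively $\lambda_{24}$).
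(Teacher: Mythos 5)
Your proof is correct and follows essentially the same route as the paper: the paper's one-line argument invokes equation $(\ddagger)$ from the proof of Proposition~\ref{proposition!G26_subformats} and its $\Dih_8$-translates, which is exactly the singularity-avoidance reasoning you spell out using Lemma~\ref{lemma!singularity-avoidance}(3) together with the codimension-$3$ components of $\sing(X^{A_i})$ from Lemma~\ref{lemma!G2_singular_locus}(1). Your observation that $\phi^*(A_{12})\neq0$ automatically kills the two components containing $A_{12}$, leaving only the constraint $d(\lambda_{13})=0$ (resp.\ $d(\lambda_{24})=0$), is precisely what $(\ddagger)$ reduces to under the $\pG_2^{(5)}$ hypotheses, so the proposal is a faithful and slightly more explicit version of the paper's argument.
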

\begin{proof}
From equation \eqref{eqn!dBij} and its translates under $\Dih_8$, if $d(A_3)<0$ then $d(\lambda_{13})=0$, and if $d(A_4)<0$ then $d(\lambda_{24})=0$. This completes the proof.
\end{proof}
\begin{rmk} We do not have special formats for cases (2) and (3), but their divisor class group has rank $>1$.%\StephenComment{The equations include quadrics of rank 4}
\end{rmk}

\begin{cor}[Subformats for $\pG_2^{(4)}$]\label{proposition!G24_subformats}
Suppose that $\widehat Y$ is in $\pG_2^{(4)}$ format, that is, $d(A_{12})=d(A_{34})=0$ and $d(A_{23}),d(A_{41})>0$. There are three possibilities (up to symmetry):
\begin{enumerate}
\item If $d(\lambda_{13}),d(\lambda_{24})>0$, then $d(A_i)\ge0$ for all $i$ and $\widehat Y$ is in (strict) $\pG_2^{(4)}$ format.
\item If $d(\lambda_{24})=0$ and $d(\lambda_{13})>0$, then $\widehat Y$ is in rolling factors format.
\item If $d(\lambda_{13})=d(\lambda_{24})=0$, then $\widehat Y$ is in $\PP^1\times \PP^1\times \PP^1$ format.
\end{enumerate}
\end{cor}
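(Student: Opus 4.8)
The plan is to follow the template of Propositions~\ref{proposition!C2_subformats} and~\ref{proposition!G26_subformats}: first pin down the admissible signs of the remaining degrees with the singularity-avoidance lemma, and then, in the degenerate subcases, realise $\widehat Y$ as a regular pullback from a smaller key variety by an explicit change of coordinates on the ambient $\Aa^n$, in the style of the maps $\phi_1,\phi_2$ in the proof of Proposition~\ref{proposition!C2_subformats}. First I would check that the three listed possibilities are exhaustive. By the argument of Claim~4 in the proof of Proposition~\ref{proposition!G26_subformats}, if $d(\lambda_{13})<0$ then some $d(A_i)<0$ and, to avoid pulling back the codimension-$3$ components of $\sing(X^{A_i})$ from Lemma~\ref{lemma!G2_singular_locus}(1), two adjacent coefficients $A_{ij},A_{jk}$ must have degree zero; but each such adjacent pair contains one of $A_{23},A_{41}$, which by hypothesis have positive degree --- contradiction. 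Hence $d(\lambda_{13}),d(\lambda_{24})\ge0$, and the residual symmetry exchanging $\lambda_{13}\leftrightarrow\lambda_{24}$ (together with $A_1\leftrightarrow A_2$, $A_3\leftrightarrow A_4$) lets us take the ``exactly one $\lambda$ vanishes'' situation to be $d(\lambda_{24})=0$, $d(\lambda_{13})>0$.

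For case~(1), suppose towards a contradiction that $d(A_i)<0$ for some $i$; up to the residual symmetry take $i=1$, so $\phi$ factors through $X^{A_1}$. To avoid pulling back the three bad components of $\sing(X^{A_1})$ we need condition~\eqref{eqn!dBij}: ($d(A_{23})=0$ or $d(A_{34})=0$) and ($d(A_{23})=0$ or $d(\lambda_{13})=0$) and ($d(A_{34})=0$ or $d(\lambda_{13})=0$). The first and third conjuncts hold because $d(A_{34})=0$, but the second fails since $d(A_{23})>0$ and $d(\lambda_{13})>0$. Therefore $d(A_i)\ge0$ for all $i$; and since $A_{12},A_{34}$ already have degree zero we may set them to $1$ and eliminate the redundant $\theta_{12},\theta_{34}$, so that $\widehat Y$ is a generic pullback from the double-Jerry presentation of \S\ref{sec!G2-subformats}, i.e.\ (strict) $\pG_2^{(4)}$ format.

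For cases~(2) and~(3) I would use the genericity convention: a parameter of degree $0$ has $\phi^*$-image a nonzero constant, so after rescaling $\phi$ factors through the specialisation $\lambda_{24}=1$ (case~2), respectively $\lambda_{13}=\lambda_{24}=1$ (case~3), on top of $A_{12}=A_{34}=1$. Substituting these into the matrices $M_{23},M_{41}$ of~\eqref{eq!G2-jerry} and the remaining relation $\theta_{23}\theta_{41}=\cdots$, and eliminating $\theta_{12},\theta_{34}$, gives nine explicit equations; I would then exhibit an invertible linear substitution on $\theta_1,\dots,\theta_4$ that absorbs the surviving coefficients --- in analogy with the substitution $X_i=(1+\omega)(\theta_i-\omega A_{jk})$ used for $\PP^2\times\PP^2$ format --- and puts these nine equations into the standard shape of rolling factors format in case~(2), and into the nine $2\times2$ minors cutting out the affine cone over the Segre embedding $\PP^1\times\PP^1\times\PP^1\subset\PP^7$ in case~(3). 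Both target formats are Gorenstein of codimension~$4$ with $9$ relations and $16$ syzygies, matching $\pG_2^{(4)}$.

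I expect the hard part to be the identification in case~(2). The $\PP^1\times\PP^1\times\PP^1$ case should be essentially forced by the extra symmetry recovered once all parameters are set to $1$, but rolling factors format is genuinely less symmetric, so one has to guess the correct linear combinations of $\theta_1,\dots,\theta_4$ so that the scroll equations and the single ``rolling'' quadric emerge in normal form, verify that the substitution is an invertible graded change of variables, and rule out any further hidden degeneration. Deciding how much of the $9\times16$ resolution needs to be matched by hand, as opposed to being inferred from the codimension-$4$ Gorenstein structure theorem once the equations are in place, is where the bookkeeping becomes delicate.
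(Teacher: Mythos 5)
Your exhaustiveness step (ruling out $d(\lambda_{ik})<0$ via the argument of Claim~4 from Proposition~\ref{proposition!G26_subformats}) and your case~(1) argument both match the paper in substance; making exhaustiveness explicit is a minor improvement, since the paper leaves it implicit in the parent proposition.

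The gap is in cases~(2) and~(3). You propose to specialise only $\lambda_{24}$ (resp.\ $\lambda_{13},\lambda_{24}$) to~$1$ alongside $A_{12}=A_{34}=1$, and then look for an invertible linear substitution on $\theta_1,\dots,\theta_4$ alone. This misses a constraint derivation that has to happen \emph{before} any substitution is attempted: combining the identities~\eqref{eqn!lambda-mu} with the implication ``$d(A_1)<0$ or $d(A_3)<0$ forces $d(\lambda_{13})=0$'' (from~\eqref{eqn!dBij} and its $\Dih_8$ translates) shows that in case~(2) one necessarily has $d(A_1),d(A_3)\ge0$ and at least one of $d(A_2),d(A_4)$ strictly negative, so $\phi^*(A_2)=0$ (say); and in case~(3) that $d(A_1)=-d(A_3)$, $d(A_2)=-d(A_4)$, so either two consecutive $A_i$ pull back to zero or all $d(A_i)=0$. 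It is exactly these vanishing $A_i$'s that make the normal forms appear. Moreover, the required coordinate changes are not linear substitutions of $\theta_1,\dots,\theta_4$ alone: in case~(2) the paper sets $A_{23}'=A_{23}+A_4\theta_4$ and $\theta_2'=\theta_2+\lambda_{13}A_4A_{41}+A_3A_4\theta_3$, which is quadratic in the ambient coordinates and mixes cluster variables with coefficients (homogeneous only because $d(\lambda_{24})=0$); and case~(3) splits into two subcases needing genuinely different substitutions, the all-$A_i$-nonzero one involving a primitive eighth root of unity. As stated, your plan of specialising only the $\lambda$'s and then applying a linear $\theta$-change will not turn the nine $\pG_2^{(4)}$ relations into rolling factors or $\PP^1\times\PP^1\times\PP^1$ form.
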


\begin{proof}
We have that $d(A_{12})=d(A_{34})=0$, so we assume that $\phi^*(A_{12})=\phi^*(A_{34})=1$. By equation \eqref{eqn!dBij}, if $d(A_1)<0$ or $d(A_3)<0$ then we must have $d(\lambda_{13})=0$. Similarly, if $d(A_2)<0$ or $d(A_4)<0$ then we must have $d(\lambda_{24})=0$.

Suppose we are in case (2). Then equations \eqref{eqn!lambda-mu} combined with the above, imply that at least one of $d(A_2)<0$ or $d(A_4)<0$ is negative, and $d(A_1),d(A_3)\geq0$. Say $d(A_2)<0$ and let $\phi\colon\Aa^{14}\to\Aa^{18}$ be defined 
by $\phi^*(A_2)=0$, $\phi^*(\lambda_{24})=\phi^*(A_{12})=\phi^*(A_{34})=1$, and $\phi^*(z)=z$ otherwise. Since $d(\lambda_{24})=0$, the following coordinate changes are homogeneous
\[A_{23}' = A_4\theta_4 + A_{23}, \quad \theta_2' = \theta_2 + \lambda_{13}A_4A_{41} + A_3A_4\theta_3, \]
and the ideal defining $\phi^*X_{\pG_2}$ is in rolling factors format:
\[ \bigwedge^2\begin{pmatrix}
\theta_{23} & \theta_3 & \theta_4 & A_{41}  \\
\theta_2'  & A_{23}' & \theta_{41} & \theta_1  
\end{pmatrix} =0 \quad \quad \begin{array}{rcl}
\theta_{23}\theta_4 &=& A_3\theta_3^2 + \lambda_{13} A_{41}\theta_3 + A_1A_{41}^2 \\
\theta_{23}\theta_{41} &=& A_3A_{23}'\theta_3 + \lambda_{13} A_{41}A_{23}' + A_1A_{41}\theta_1 \\
\theta_2'\theta_{41} &=& A_3A_{23}'^2 + \lambda_{13} \theta_1A_{23}' + A_1\theta_1^2 \\
\end{array}\]

If we are in case (3), then equations \eqref{eqn!lambda-mu} reduce to $d(\lambda_{13})=d(A_1)+d(A_3)$ and $d(\lambda_{24})=d(A_2)+d(A_4)$. Thus $d(A_1)=-d(A_3)$ and $d(A_2)=-d(A_4)$. So either two consecutive 
$A_i$ have negative degree, or $d(A_i)=0$ for all $i$.

For the former case, suppose $\phi^*(A_2)=\phi^*(A_3)=0$, $\phi^*(\lambda_{13})=\phi^*(\lambda_{24})=\phi^*(A_{12})=\phi^*(A_{34})=1$ and $\phi^*(z)=z$ otherwise. Then $\phi^*(X_{\pG_2})\subset\Aa^{12}$ is in $\PP^1\times\PP^1\times\PP^1$ format, defined by the $2\times2$ minors of the following cube after the displayed coordinate changes:
\renewcommand{\arraystretch}{1.2}
\begin{center} \begin{tikzpicture}[scale=0.75]
   \draw (0,0) -- (2,0) -- (3,1) -- (3,3) -- (1,3) -- (0,2) -- cycle;
   \draw (0,0) -- (1,1) -- (1,3) (1,1) -- (3,1) (0,2) -- (2,2) -- (2,0) (2,2) -- (3,3);
   \node[fill=white] at (0,0) {$\theta_2'$};
   \node[fill=white] at (2,0) {$\theta_{23}$};
   \node[fill=white] at (1,1) {$\theta_{1}$};
   \node[fill=white] at (3,1) {$A_{41}$};
   \node[fill=white] at (0,2) {$A_{23}'$};
   \node[fill=white] at (2,2) {$\theta_3'$};
   \node[fill=white] at (1,3) {$\theta_{41}$};
   \node[fill=white] at (3,3) {$\theta_4$};
   \node at (9,1.5) {$\begin{array}{rcl} \theta_2' &=& \theta_2 + A_4A_{41} \\
   \theta_3' &=& \theta_3 + A_{41}A_1 \\
   A_{23}' &=& A_{23} + A_4\theta_4 + A_1\theta_1 \end{array}$};
\end{tikzpicture} \end{center}

In the latter case, $\phi^*(A_i)=\phi^*(\lambda_{13})=\phi^*(\lambda_{24})=\phi^*(A_{12})=\phi^*(A_{34})=1$ for all 
$A_i$, and $\phi^*(z)=z$ otherwise. Then $\phi^*(X_{\pG_2})\subset\Aa^{12}$ 
is in $\PP^1\times\PP^1\times\PP^1$ format, defined by the $2\times2$ minors of the cube after the displayed coordinate changes:
\renewcommand{\arraystretch}{1.2}
\begin{center} \begin{tikzpicture}[scale=0.75]
   \draw (0,0) -- (2,0) -- (3,1) -- (3,3) -- (1,3) -- (0,2) -- cycle;
   \draw (0,0) -- (1,1) -- (1,3) (1,1) -- (3,1) (0,2) -- (2,2) -- (2,0) (2,2) -- (3,3);
   \node[fill=white] at (0,0) {$\theta_2'$};
   \node[fill=white] at (2,0) {$\theta_{23}$};
   \node[fill=white] at (1,1) {$\theta_1'$};
   \node[fill=white] at (3,1) {$A_{41}'$};
   \node[fill=white] at (0,2) {$A_{23}'$};
   \node[fill=white] at (2,2) {$\theta_3'$};
   \node[fill=white] at (1,3) {$\theta_{41}$};
   \node[fill=white] at (3,3) {$\theta_4'$};
   \node at (9,1.5) {$\begin{array}{rcl} 
%   \theta_{41}' &=& 2\theta_{41} \\
%   \theta_2' &=& \theta_2+\theta_3 \\
%   \theta_4' &=& 2(\theta_4+\theta_1) \\
%   \theta_1' &=& (1+i)\theta_1 - (1-i)\theta_4 + 2iA_{23} \\
%   A_{23}' &=& (1-i)\theta_1 - (1+i)\theta_4 - 2iA_{23} \\
%   \theta_3' &=& -(1+i)\theta_3 + (1-i)\theta_2 - 2iA_{41} \\
%   A_{41}' &=& -(1-i)\theta_3 + (1+i)\theta_2 + 2iA_{41}
   \theta_2' &=& \theta_2+\theta_3 \\
   \theta_4' &=& \theta_4+\theta_1 \\
   \theta_1' &=& \epsilon\theta_1 + \epsilon^3\theta_4 + \sqrt{2}\epsilon^2A_{23} \\
   -A_{23}' &=& \epsilon^3\theta_1 + \epsilon\theta_4 + \sqrt{2}\epsilon^2A_{23} \\
   -\theta_3' &=& \epsilon\theta_3 + \epsilon^3\theta_2 + \sqrt{2}\epsilon^2A_{41} \\
   A_{41}' &=& \epsilon^3\theta_3 + \epsilon\theta_2 + \sqrt{2}\epsilon^2A_{41}
   \end{array} $};
\end{tikzpicture}  \end{center}
where $\epsilon$ is a primitive $8$th root of unity.
%\StephenComment{Should Check all these again, since coordinates have been changed}
\end{proof}

\section{Applications to constructing Fano 3-folds}\label{sec!fanos}
\subsection{Introduction to Fano 3-folds}
A \emph{Fano 3-fold} is a normal projective 3-fold $Y$ with at worst $\QQ$-factorial terminal singularities and whose anticanonical divisor $-K_Y$ is $\QQ$-Cartier and ample. The \emph{Fano index} of $Y$ is the largest positive integer $q$, such that $-K_Y=qA$ for some ample Weil divisor $A$. If the Weil divisor class group $\Cl(Y)=\ZZ$, then $Y$ is called \emph{prime}. The discrete invariants of $Y$ are $q$, $h^0(Y,A)$ and the basket of terminal quotient singularities $\mathcal{B}$. There are a finite number of numerical possibilities for $(q,h^0(Y,A),\mathcal{B})$, and approximately 50,000 such are listed in \cite{grdb}, produced using \cite{abr,bs,bs2}. We refer to any one such numerical possibility as a \emph{candidate} Fano 3-fold.

The next stage of the classification is to prove whether a given candidate $Y$ exists, and then to investigate the structure of the Hilbert scheme of $Y$. We construct $Y$ by taking $\Proj$ of the finitely generated Gorenstein graded ring $R(Y,A)=\bigoplus_{n\ge0}H^0(Y,nA)$. A choice of generators for $R(Y,A)$ gives an embedding of $Y$ into weighted projective space $\PP(a_1,\dots,a_n)$. From now on, we assume that $Y$ is quasismooth with at worst terminal quotient singularities. The expected codimension of $Y$ may be computed from the Hilbert series $P_{(Y,A)}(t)=\sum_{n\ge0}h^0(Y,nA)t^n$, which is in turn computed using the above invariants. Since our cluster formats have codimension 4, 5 or 6, we only consider those candidates whose expected dimension lies in this range. We further assume that $R(Y,A)$ is generated as simply as possible; that is, we do not consider specialisations of $A$ (e.g.~hyperelliptic, trigonal, etc.), which may also have cluster format constructions, but in higher than expected codimension.

\subsection{Primality of Fano 3-folds}\label{sec!primality}
We give a criterion for checking primality of quasismooth varieties in cluster format.
\begin{lem}
Let $k$ be an algebraically closed field of characteristic $0$. Suppose that $Y=\phi^{-1}(X)$ is a quasismooth variety defined over $k$, of dimension $\ge3$ in cluster format. Choose one of the cluster variables $\theta_i$ or $\theta_{ij}$ and denote it by $\theta$. If $\phi^*(\theta)$ is a prime element of $k[Y]$, then every Weil divisor on $Y$ is of the form $\sO_Y(n)$ for some $n$.
\end{lem}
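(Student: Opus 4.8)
The plan is to exploit the partial open covering from Lemma~\ref{lem!partial-covering} together with the affine-chart description of the cluster variety. Recall from \S\ref{subsection!C2-affine-pieces} and \S\ref{sect!G2-deep} that each open piece $U_\theta = X \cap (\theta \neq 0)$ is a complete intersection, in which most of the defining equations can be used to eliminate variables. Pulling this back along $\phi$, the open set $Y \cap (\phi^*(\theta) \neq 0)$ becomes a complete intersection inside an affine space (or a torus-bundle over one) with coordinate ring obtained from $k[Y]$ by inverting $\phi^*(\theta)$. The key point is that this localisation is a polynomial ring localised at a few elements, or close to it — in any case a \emph{unique factorisation domain}. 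So $\Cl\big(Y \cap (\phi^*(\theta)\neq 0)\big) = 0$.

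**Key steps, in order.** First, establish that $k[Y]$ is a normal domain: this is immediate from quasismoothness (so $Y$ is normal by Serre, being regular in codimension $1$ away from the cone point, and $S_2$ since Gorenstein) together with primality of $\phi^*(\theta)$, which guarantees $k[Y]$ is a domain. Second, use the excision sequence for the divisor $D = V(\phi^*(\theta)) \subset \operatorname{Spec} k[Y]$: there is an exact sequence
\[
\ZZ \cdot [D] \longrightarrow \Cl\big(\operatorname{Spec} k[Y]\big) \longrightarrow \Cl\big(\operatorname{Spec} k[Y][1/\phi^*(\theta)]\big) \longrightarrow 0,
\]
valid because $D$ is irreducible (here is where primality of $\phi^*(\theta)$ is essential — it makes $D$ prime, so the complement of $D$ removes exactly one prime divisor). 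Third, identify $k[Y][1/\phi^*(\theta)]$, via the regular-pullback description of $U_\theta$, as a localisation of a polynomial ring; deduce its class group vanishes. Fourth, conclude that $\Cl\big(\operatorname{Spec} k[Y]\big)$ is generated by the class $[D]$, which is the restriction of the ambient $\sO(d(\theta))$; combined with the fact that $Y$ is the $\Proj$ of a graded ring with an ample class, every Weil divisor class on the projective variety $Y$ is an integer multiple of the generating ample class, i.e.\ of the form $\sO_Y(n)$. (One should be careful to pass between the affine cone $\widehat Y$ and the projective $Y$; the grading gives a surjection from $\Cl(\widehat Y)$, and the class of $D$ maps to $\sO_Y(d(\theta))$.)

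**Main obstacle.** The delicate point is the third step: verifying that the localised coordinate ring $k[Y][1/\phi^*(\theta)]$ really is (a localisation of) a polynomial ring, or at least a UFD. In the chart $U_\theta$, Lemma~\ref{lem!partial-covering} says the cluster equations reduce to a complete intersection, and in fact the explicit equations (e.g.\ those displayed in \S\ref{subsection!C2-affine-pieces}) are of the form ``$\theta \cdot (\text{coordinate}) = (\text{polynomial in the others})$'', which lets one solve for several of the $\theta_{\bullet}$ in terms of the remaining variables once $\theta$ is inverted. After this elimination the chart is an open subset of affine space, hence has trivial class group. The subtlety is that $\phi$ is a \emph{regular pullback}, not necessarily surjective, so one must check the pulled-back equations still permit the same eliminations — but this is exactly what the regular-pullback hypothesis (Proposition--Definition, condition (2): $\FF$ pulls back to a resolution) guarantees, since the structure of the equations is preserved. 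A secondary point worth care: the argument needs $\dim Y \ge 3$ so that, after inverting $\phi^*(\theta)$, the complement still has codimension $\ge 2$ worth of strata removed only along the single prime divisor $D$ — but this is automatic, and the deep locus contributes nothing to $\Cl$ since it has codimension $\ge 2$ in $\operatorname{Spec} k[Y]$ (it already has high codimension in $X$, and regular pullback preserves codimension).
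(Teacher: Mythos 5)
Your first two steps coincide with the paper's proof: establish that $k[\widehat Y]$ is a normal domain and reduce, via Nagata's lemma (equivalently, the excision sequence you write down), to showing the localisation $k[\widehat Y]_\tau$ is factorial. The divergence, and the gap, is in your third step.

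You claim that, because the chart $U_\theta = X\cap(\theta\ne 0)$ is a localisation of a polynomial ring (obtained by eliminating the ``solvable'' cluster variables once $\theta$ is inverted), the pullback $\widehat Y\cap(\tau\ne 0)$ is also a localisation of a polynomial ring, and that this is ``exactly what the regular-pullback hypothesis guarantees, since the structure of the equations is preserved.'' That deduction is false. Regular pullback preserves the free resolution and hence the codimension, but it does \emph{not} preserve elimination structure. The elimination in $U_\theta$ works because each of the remaining cluster variables $\theta_{\bullet}$ is an honest ambient coordinate that can be solved for; after pulling back by $\phi$, $\phi^*(\theta_{\bullet})$ is a general weighted-homogeneous polynomial (see e.g.\ the entries $P_{12},Q_{10},R_8,\dots$ in Example~\ref{eg!25-continued}), not a coordinate, so the equation $\phi^*(\theta_{\bullet})\,\tau = \cdots$ is a genuine hypersurface equation in $\Aa^m_\tau$, not an elimination. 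What is true, and what the paper actually uses, is only that $\widehat Y\cap(\tau\ne 0)$ is a \emph{complete intersection} in $\Aa^m_\tau$; it is in general not a localisation of a polynomial ring, nor is a UFD necessarily of that form. The correct way to conclude, as the paper does, is the Grothendieck--Lefschetz theory from SGA2: complete intersection local rings of dimension $\ge 4$ are parafactorial (this, not a codimension count of deep-locus strata, is where $\dim Y\ge 3$, i.e.\ $\dim\widehat Y\ge 4$, enters), and since $\widehat Y$ is regular away from the vertex by quasismoothness and the vertex lies outside the chart $(\tau\ne 0)$, the parafactoriality statements of SGA2 XI 3.10, 3.13 give factoriality of $k[\widehat Y]_\tau$. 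Your proposal would need to be repaired by replacing the elimination argument with this parafactoriality argument.
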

\begin{proof} We show that the coordinate ring $k[\widehat Y]$ is factorial. If $\tau=\phi^*(\theta)$ is a prime element of $k[\widehat Y]$, then by Nagata's lemma \cite[Theorem 20.2]{Matsumura}, it suffices to show that the localisation $k[\widehat Y]_{\tau}$ is factorial. The open set $\widehat Y\cap(\tau\ne0)$ is a complete intersection, because the localisation at $\tau$ factors through the open subset $\widehat X\cap(\theta\ne0)$, which is a complete intersection by Lemma \ref{lem!partial-covering}. Since complete intersections of dimension $\ge4$ are parafactorial and $\widehat Y$ is regular outside the vertex (by quasismoothness), it follows that $k[\widehat Y]_{\tau}$ is factorial (see \cite[XI 3.10, 3.13]{sga2}).
\end{proof}
The following theorem summarises the application of this criterion to our list of Fano 3-folds in cluster formats:
\begin{thm} \leavevmode
\begin{enumerate}
\item If $Y$ is in $\pC_2$ format and not $\PP^2\times\PP^2$ subformat, then $Y$ is prime;
\item If $Y$ is in $\pG_2^{(4)}$ format and not rolling factors or $(\PP^1)^3$ subformat, then $Y$ is prime;
\item If $Y$ is in $\pG_2^{(5)}$ format, and in case 1 of Corollary \ref{proposition!G25_subformats}, then $Y$ is prime. %(TBC: expect about 15 of 45 to be prime)\StephenComment{FIX ME}
\end{enumerate}
\end{thm}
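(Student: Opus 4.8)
The plan is to invoke the primality criterion proved above: in each of the three cases it suffices to exhibit a cluster variable $\theta$ such that $\phi^*(\theta)$ is a prime element of $k[\widehat Y]$, equivalently such that $k[\widehat Y]/(\phi^*\theta) = k[\widehat Y^\theta]$ is an integral domain, where $\widehat Y^\theta := \phi^{-1}(X^\theta) = \widehat Y\cap(\phi^*\theta = 0)$. Since $Y$ is quasismooth of dimension $\ge3$, the cone $\widehat Y$ is normal, regular away from its vertex, and irreducible; as $d(\theta)>0$ the element $\phi^*\theta$ is a nonzero, hence nonzerodivisor, element of $k[\widehat Y]$, so $\widehat Y^\theta$ is Cohen--Macaulay and equidimensional of dimension $\dim\widehat Y-1$, and in particular has no embedded components. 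Consequently $\widehat Y^\theta$ is integral as soon as some dense open subset of it is integral. The whole problem therefore reduces to finding a good $\theta$ and a dense open piece of $\widehat Y^\theta$ that is visibly irreducible and reduced.

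For $\pC_2$ format I would take $\theta$ to be a tag-$1$ cluster variable $\theta_{ij}$, and for $\pG_2^{(4)}$ and $\pG_2^{(5)}$ a tag-$1$ variable not already eliminated (so $\theta_{23}$ or $\theta_{41}$ in the former, $\theta_{23}$, $\theta_{34}$ or $\theta_{41}$ in the latter). The strategy is then to cover $\widehat Y^\theta$ by the affine pieces $U_a = \widehat Y^\theta\cap(\phi^*\theta_a\ne0)$ obtained from Lemma~\ref{lem!partial-covering}, ranging over the remaining cluster variables $\theta_a$: each $U_a$ is cut out in an affine space by a complete intersection. The union $\bigcup_a U_a$ is dense in $\widehat Y^\theta$ because its complement is the pullback of the deep locus of $X^\theta$, which by Lemmas~\ref{lemma!C2_singular_locus} and~\ref{lemma!G2_singular_locus} together with the Singularity avoidance Lemma~\ref{lemma!singularity-avoidance} has strictly smaller dimension (this is exactly the kind of codimension count already carried out in Propositions~\ref{proposition!C2_subformats} and~\ref{proposition!G26_subformats}). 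The $U_a$ pairwise intersect, so once each $U_a$ is shown to be integral, $\bigcup_a U_a$ is integral, hence so is $\widehat Y^\theta$ by the remark above.

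Showing each $U_a$ is integral is where the hypotheses enter and is the main obstacle. On $U_a$ the format equations reduce to a complete intersection, and imposing $\phi^*\theta=0$ collapses two exchange relations to the vanishing of an exchange polynomial: in the $\pC_2$ case this is the quadratic $A_iT^2 + \lambda A_{jk}T + A_jA_{jk}^2A_k$ evaluated at $T=\phi^*\theta_i$, and in the $\pG_2$ cases the analogous cubic from~\eqref{eqns!G2-format}. The irreducibility of $U_a$ reduces to the irreducibility of these polynomials over the coordinate ring of the chart. For $\pC_2$ the quadratic has discriminant $A_{jk}^2(\lambda^2-4A_iA_jA_k)$, which is a square in the coordinate ring of the chart (for generic $\phi$) precisely when $\phi^*(\lambda)$ degenerates to a nonzero constant, i.e.\ precisely when $d(\lambda)=0$, which is $\PP^2\times\PP^2$ subformat. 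Since by assumption we are not in that subformat, $d(\lambda)>0$, the polynomial is irreducible, $U_a$ is prime, and $\widehat Y$ is factorial; conversely in $\PP^2\times\PP^2$ format the quadratic factors into two linear pieces and $\widehat Y^\theta$ genuinely splits, which is why those varieties fail to be prime. For $\pG_2^{(4)}$ the argument is identical after setting $A_{12}=A_{34}=1$: the cubic exchange polynomial factors off a linear factor (or splits completely) exactly in rolling factors format ($d(\lambda_{24})=0$) and $(\PP^1)^3$ format ($d(\lambda_{13})=d(\lambda_{24})=0$), cf.\ Corollary~\ref{proposition!G24_subformats}, the two excluded cases.

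For $\pG_2^{(5)}$ in case~1 of Corollary~\ref{proposition!G25_subformats} one has $d(A_i)\ge0$ for all $i$ and $d(\lambda_{13}),d(\lambda_{24})\ge0$, and the remaining task is to check that the cubic exchange polynomial
\[ A_jT^3 + \lambda_{jl}A_{kl}A_{li}T^2 + \lambda_{ik}A_{kl}^2A_lA_{li}^2T + A_kA_{kl}^3A_l^2A_{li}^3A_i \]
admits no factorisation into a linear times a quadratic factor whose coefficients are expressible through the chart's ring generators; once this is known, the covering argument concludes exactly as before. I expect this cubic bookkeeping --- ruling out every admissible factorisation pattern uniformly over the admissible degree data --- to be the most delicate part of the argument, since (unlike the $\pC_2$ quadratic, where one only has to track a single discriminant) several factorisation shapes must be excluded; but it mirrors the case analyses already completed in Propositions~\ref{proposition!C2_subformats} and~\ref{proposition!G26_subformats}, and the underlying dichotomy is the same one: the exchange polynomial is irreducible in exactly the formats listed in the theorem, and becomes reducible --- forcing extra divisor classes and hence failure of primality --- in each excluded subformat.
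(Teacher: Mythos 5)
Your proposal takes a genuinely different route from the paper, and the route you take has a real gap at its centre.

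The paper's proof is a per-construction computer verification built directly on the preceding primality criterion. For each $Y=\phi^{-1}(X)$ it checks, over $\QQ$, that $Y\cap V(\phi^*\theta)$ is nonsingular in codimension~$1$ for some cluster variable $\theta$; since $Y\cap V(\phi^*\theta)$ is Gorenstein and hence $S_2$, the condition $R_1+S_2$ gives normality, this is a geometric property, and \cite[IV \S4.6]{EGAIVb} then yields geometric irreducibility of $Y\cap V(\phi^*\theta)$, i.e.\ primality of $\phi^*\theta$ in $k[\widehat Y]$ for $k=\CC$. Crucially, the $\theta$ used are the tag~$>1$ variables $\theta_i$ (only $\theta_1,\theta_2,\theta_3$ for $\pC_2$; $\theta_1,\theta_3$ for $\pG_2^{(4)}$; $\theta_3$ for $\pG_2^{(5)}$) --- the paper explicitly avoids the tag-$1$ variables because Lemmas~\ref{lemma!C2_singular_locus} and~\ref{lemma!G2_singular_locus} show that the sections $X^{\theta_{ij}}$ (and, before specialisation, $X^{\theta_i}$) are never $R_1$, so they can never pass the normality test. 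You instead propose to take $\theta=\theta_{ij}$ and prove irreducibility of $\widehat Y^{\theta_{ij}}$ directly; this puts you in exactly the sections that the paper shows are \emph{not normal}, so you forego the whole $R_1+S_2\Rightarrow$ geometrically normal $\Rightarrow$ geometrically irreducible shortcut and must argue irreducibility barehanded.

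The gap is precisely at that bare-handed step. Your claim is that the exchange polynomial (quadratic for $\pC_2$, cubic for $\pG_2$) restricted along $\phi$ is irreducible over the chart's coordinate ring exactly when the format is not the excluded subformat, and your argument for this is the observation that the $\pC_2$ discriminant $A_{jk}^2(\lambda^2-4A_iA_jA_k)$ ``is a square (for generic $\phi$) precisely when $d(\lambda)=0$.'' This is not proved, and it is not obviously true: whether $\phi^*(\lambda)^2-4\phi^*(A_i)\phi^*(A_j)\phi^*(A_k)$ is a square in the chart ring depends on the specific degrees $a_1,\dots,a_m$ and on the other relations in the chart, and even ``generic $\phi$'' needs a case-by-case argument matching the case analysis in Proposition~\ref{proposition!C2_subformats}. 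You acknowledge the corresponding $\pG_2$ cubic factorisation bookkeeping as ``the most delicate part'' and leave it entirely open. Since the theorem \emph{is} that factorisation dichotomy in disguise, you have essentially restated it rather than proved it. Furthermore, the theorem is a statement about a finite list of individual constructions (recorded in \cite{bigtables}); the paper's proof is a finite verification, while your sketch aspires to a uniform argument that would make the computer check unnecessary --- a stronger result, but one that would need the factorisation claim to be established rigorously rather than asserted.

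Two smaller points. First, your reduction ``CM, no embedded primes, dense open integral $\Rightarrow$ integral'' is fine, and the overlapping-chart argument for irreducibility of $\bigcup_a U_a$ is also fine. Second, the density of $\bigcup_a U_a$ in $\widehat Y^\theta$ should be justified against the \emph{deep locus} dimension estimates of \S\ref{subsection!C2-affine-pieces} and \S\ref{sect!G2-deep} combined with Lemma~\ref{lemma!singularity-avoidance}(2), not against the singular-locus lemmas you cite --- those bound a smaller set.
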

\begin{proof}
Primality depends on the format and on $\phi$, so we apply the above Lemma to each construction individually, using the computer. We do not check primality of $\tau=\phi^*(\theta)$ in $k[\widehat Y]$ directly, as the computer does this over $\QQ$, and $\tau$ could still be nonprime over $\CC$. 
Instead, we check that $Y\cap V(\tau)$ is nonsingular in codimension 1 (this computation is valid over $\CC$). Since $R_1+S_2$ is equivalent to normality, the fact that $Y$ is Gorenstein implies that $Y\cap V(\tau)$ is normal over $\CC$ and hence geometrically normal. Thus by \cite[IV \S4.6]{EGAIVb}, $Y\cap V(\tau)$ is geometrically irreducible, in particular irreducible over $\CC$.

Moreover, it follows from Lemmas \ref{lemma!C2_singular_locus} and \ref{lemma!G2_singular_locus}, that $Y\cap V(\theta)$ is necessarily singular in co\-dimension 1 for certain choices of $\theta$. Thus for $\pC_2$ format, we need only check
$\theta_1$, $\theta_2$, $\theta_3$, for $\pG_2^{(4)}$ format only $\theta_1$, $\theta_3$, and for $\pG_2^{(5)}$ format only $\theta_3$.
\end{proof}
% Interestingly, the proof of 5.2 would not work for G_2^6 format, because all choices for tau would be singular
% in codim 1 by Lemma \ref{lemma!G2_singular_locus}

\subsection{Comparison with Tom \& Jerry}
In this subsection, we suppose that $Y$ is a Fano 3-fold in codimension 4 with a type I centre. The definitive guide to this situation is \cite{bkr}, according to which, each $Y$ has at least two constructions: one Tom and one Jerry. In total, 274 of the 322 families from \cite{bkr} contain a subfamily which is in a cluster format. %\StephenComment{\cite{bkr} states ``approx. 320'' in their main theorem. I will double check our numbers}

Based on analysis of our classification \cite{bigtables}, we make the following observation:
\begin{equation}\label{ass!TJ}
\tag{TJ} \parbox{\dimexpr\linewidth-4em}{Up to symmetry of the cluster format and choice of coordinates, the type I centre is positioned at the coordinate point $P_{\phi^*(\theta_{12})}$.}
\end{equation}
Thus if $Y$ is in $\pC_2$ format, then the projection is the $\text{Tom}_3$ matrix \eqref{eq!C2-tom}, and if 
$Y$ is in $\pG_2^{(4)}$ format, then the projection is the $\text{Jerry}_{24}$ matrix \eqref{eq!G2-jerry}.

Under assumption \eqref{ass!TJ}, we can transform the output of \cite{bkr} into a short list of possible cluster formats for $Y$, by permuting the row-columns of the skew-symmetric weight matrix appropriately. We work through a representative example.

\begin{eg} According to \cite{bkr}, candidate $\#5000$ $Y\subset\PP(1,1,3,4,4,5,5,9)$ has $\text{Tom}_4$ and $\text{Jerry}_{24}$ projections from the type $\rm I$ centre $\frac19(1,4,5)$, leading to an unprojection divisor $\PP(1,4,5)$ inside a Fano 3-fold $\overline Y\subset\PP(1,1,3,4,4,5,5)$ defined by the Pfaffians of a $5\times5$ skew matrix. We assume that $\overline Y$ is a Tom$_4$. The weights of this skew matrix $(m_{ij})$ are then
\[(m_{ij})=\begin{pmatrix}
3 & 4 & 3 & 4\\
& 5 & 4 & 5 \\
& & 5 & 6 \\
& & & 5
\end{pmatrix}\]
after swapping row-columns 3 and 4, to match up with \eqref{eq!C2-tom}.
According to \eqref{eq!C2-tom}, the 1-parameter subgroup $\rho\colon\CC^*\to\TT^D$ (see \ref{eg!first-C2}) corresponding to $(m_{ij})$ is 
$\rho=(d(\theta_{12}),m_{14},m_{25},m_{35},m_{13},m_{24})$. Further permutations fixing row--column 3 lead to different possibilities for $\rho$. After removing those which are invalid according to Proposition \ref{proposition!C2_subformats}, we get four possible $\pC_2$-formats matching $\text{Tom}_4$, indexed by the corresponding permutation:
\[
\rho=(9,3,5,6,4,4),\ \rho_{(1,2)}=(9,4,5,6,5,3),\ \rho_{(4,5)}=(9,5,4,5,4,5),\ \rho_{(1,2)(4,5)}=(9,5,3,5,5,5)
\]
Of these, $\rho\mapsto
\pC_2\left(\begin{smallmatrix}9 & 3 & 5 \\ 1 & 5 & 5 \end{smallmatrix}\big\vert
\begin{smallmatrix}6 & 4 & 4 \\ 2 & 4 & 0 \end{smallmatrix}\big\vert
\begin{smallmatrix}3\end{smallmatrix}\right)$ gives a working construction for $Y$, corresponding to a subfamily of that constructed by \cite{bkr}. The other three fail because the adjunction number is wrong.
%
%\begin{align*}
%& \text{working construction} \\
%\rho_{(1,2)}&=(9,4,5,6,5,3) 
%%\mapsto
%%\pC_2\left(\begin{smallmatrix}9 & 4 & 5 \\ 2 & 4 & 4 \end{smallmatrix}\big\vert
%%\begin{smallmatrix}6 & 5 & 3 \\ 2 & 3 & 3 \end{smallmatrix}\big\vert
%%\begin{smallmatrix}4\end{smallmatrix}\right)
%&
%\text{wrong adjunction number} \\
%\rho_{(4,5)}&=(9,5,4,5,4,5) & \text{wrong adjunction number} \\%\mapsto
%%\pC_2\left(\begin{smallmatrix}9 & 5 & 4 \\ d & d & d \end{smallmatrix}\big\vert
%%\begin{smallmatrix}5 & 4 & 5 \\ a & b & c \end{smallmatrix}\big\vert
%%\begin{smallmatrix}d\end{smallmatrix}\right)\\
%\rho_{(1,2)(4,5)}&=(9,5,3,5,5,5) & \text{wrong adjunction number}
%%\mapsto
%%\pC_2\left(\begin{smallmatrix}9 & 5 & 3 \\ d & d & d \end{smallmatrix}\big\vert
%%\begin{smallmatrix}5 & 5 & 5 \\ a & b & c \end{smallmatrix}\big\vert
%%\begin{smallmatrix}d\end{smallmatrix}\right)
%\end{align*}
We carried out a similar analysis for $\text{Jerry}_{24}$. There is no $\pG_2^{(4)}$ construction for candidate $\#5000$.
%\StephenComment{Say this better? Maybe putative weights do not match permuted $(m_{ij})$ as well}
\end{eg}
Thus cluster format constructions do not exist for some of the families constructed by \cite{bkr}. Heuristically, the cluster format restricts the monomials available to the $5\times 5$ matrix, and this sometimes imposes worse than allowed singularities on $\overline Y$ and therefore $Y$.

\subsection{Fano 3-folds of large Fano index}

Table \ref{tab!fano-index} presents the data of \cite{grdb} for prime Fano 3-folds with $q\ge2$ in codimension 4, and its refinement using results of Prokhorov \cite[\S1]{P13} on the nonexistence of certain candidates. The last row lists our cluster format constructions which are prime.
\begin{table}[ht]
\caption{Fano 3-folds with $q\ge2$.}\label{tab!fano-index}
\[\renewcommand{\arraystretch}{1.1}
\begin{array}{lccccccc}
\text{Fano index $q$} & 2 & 3 & 4 & 5 & 6 & 7 & >7 \\
\hline
\text{GRDB candidates in codimension 4} & 37 & 11 & 5 & 2 & 3 & 3 & 0 \\
\text{candidates which do not exist} & ? & 1 & 1 & 0 & 1 & 1 & 0 \\
\text{candidates with cluster format constructions} & 27 & 8 & 4 & 2 & 2 & 2 & 0
\end{array}\]
\end{table}

Thus the question of existence is now settled in codimension 4 and Fano index $\ge4$. In particular, the constructions of two index 7 candidates provide an answer to a question of Prokhorov \cite[\S1.4]{P16}. For index 3, the missing candidates are $\#41058$ and $\#41245$. It would be interesting to know whether these exist. Brown and Suzuki \cite{bs} constructed 33 of the index 2 candidates, although it is not clear to us whether these constructions are prime. We have prime cluster format constructions corresponding to 27 of these 33 candidates. Thus there remain at least four candidates for which is it not known whether there is a prime construction, hence the ``?'' in the table.

The finer question of describing the Hilbert scheme for each of the candidates with $q\ge2$ remains open. For some candidates, we get two distinct cluster constructions, often both prime. Perhaps the general phenomenon from \cite{bkr} persists, and there are always at least two components to the Hilbert scheme, if we relax the requirement that $Y$ be prime.

\subsection{Fano 3-folds with empty $|{-K_Y}|$}

We use $\pC_2$ cluster format to construct two codimension 4 candidates with $|{-K_Y}|$ empty. These both have extrasymmetric descriptions induced by the $\pC_2$ format. We explain $\#25$ in some detail; $\#38$ is rather similar.
\begin{eg}\label{eg!25-continued}
Candidate $\#25$ is $Y\subset\PP(2,5,6,7,8,9,10,11)$. Let $p,q,r,s,t,u,v,w$ be coordinates on the ambient space. 
With the notation established in Example \ref{eg!first-C2}, the cluster format is $\pC_2\left(\begin{smallmatrix}8 & 10 & 12 \\ 8 & 7 & 9 \end{smallmatrix}\big\vert
\begin{smallmatrix}10 & 6 & 11 \\ 0 & 6 & 0 \end{smallmatrix}\big\vert
\begin{smallmatrix}3\end{smallmatrix}\right)$, and after coordinate choices, the general morphism $\phi\colon\Aa^8\to\Aa^{13}$ of degree~0 is:
\[\left(\begin{array}{ccc|ccc|}
R_8 & v & P_{12} & Q_{10} & S_6 & w \\
t & s & u & 1 & r & 1\end{array}
\:\: 0 \right),
\]
where $P_{12},Q_{10},R_8,S_6$ are general weighted homogeneous forms of degree given by the subscript.
%We have exchanged $R$ with $t$ and $S$ with $r$ to make the end result more attractive.
Since $d(\lambda)=3$ forces $\phi^*(\lambda)=0$ for degree reasons, and $\phi^*(B_{23})=\phi^*(B_{31})=1$, the equations defining $Y$ have a nice extrasymmetric format with floating factor $r$ (see \ref{section!crazy}):
\[ %\renewcommand*{\arraystretch}{1.1}
\Pf_4\begin{pmatrix}
t & S_6 & v & w & u \\
& s & w & P_{12} & Q_{10} \\
& & u & Q_{10} & R_8 \\
& & & rt & rS_6 \\
& & & & rs
\end{pmatrix}. \]
\end{eg}
%\begin{eg}\label{eg!38-continued}
%Candidate $\#38$, $Y\subset\PP(2,3,4,5,6,7,8,9)$ has a rather similar construction. Let $p,q,r,s,t,u,v,w$ be coordinates as before. The cluster format is 
%$\left(\begin{smallmatrix}4 & 8 & 10 \\ 8 & 6 & 6 \end{smallmatrix}\big\vert
%\begin{smallmatrix}5 & 9 & 7 \\ 2 & 0 & 0 \end{smallmatrix}\big\vert
%\begin{smallmatrix}1\end{smallmatrix}\right)$
%which leads us to a general $\phi$,
%\[\left(\begin{array}{ccc|ccc|}
%r & v & P_{10} & s & w & u \\
%Q_8 & t & R_6 & p & 1 & 1\end{array}
%\:\: 0 \right),
%\]
%which has similar extrasymmetric properties to $\#25$:
%\[ %\renewcommand*{\arraystretch}{1.1}
%\Pf_4\begin{pmatrix}
%Q_8 & s & v & w & R_6 \\
%& t & w & P_{10} & u \\
%& & R_6 & u & r \\
%& & & pQ_8 & ps \\
%& & & & pt
%\end{pmatrix}. \]
%\end{eg}
The third codimension 4 candidate with $|{-K_Y}|$ empty, $\#166$, does not have a cluster format construction. Indeed, a proposed construction for $\#166$ is as a $\ZZ/2$-quotient of a complete intersection Fano 3-fold \cite{AR}. This proposed construction has expected embedding codimension~$>4$. There are a handful of further candidates with $|{-K_Y}|$ empty in codimension 5 and 6, but none of these have cluster format constructions.

\subsection{Fano 3-folds with no projections}\label{section!no-projections}

According to \cite{AO} the candidates that are most likely to give rise to birationally rigid Fano 3-folds, are those with no centres of projection. In codimension 4 and Fano index 1, there are five such candidates, of which we construct three: $\#25$ has $|{-K_Y}|$ empty, and is treated above; $\#29374$ is a del Pezzo 3-fold, classically known; $\#282$ has two constructions, which we describe here:
\begin{eg} Candidate $\#282$ is $Y\subset\PP(1,6,6,7,8,9,10,11)$. Let $p,q,r,s,t,u,v,w$ be coordinates on the ambient space.
We first consider the cluster format 
$\pG_2\left(\begin{smallmatrix}15 & 9 & 21 & 12 \\ 0 & 7 & 0 & 8 \end{smallmatrix}\big\rvert
\begin{smallmatrix} 9 & 6 & 10 & 11 \\ 0 & 6 & 0 & 0 \end{smallmatrix}\big\vert
\begin{smallmatrix} 2 \\ 4 \end{smallmatrix}\right)$, which is in $\pG_2^{(4)}$ subformat, because $d(A_{12})=d(A_{34})=0$ (see \S\ref{eg!first-G2} for notation). The general morphism $\phi\colon\Aa^8\to\Aa^{18}$ is
\[\left(\begin{array}{cccc|cccc|c}
\theta_{12} & Q_9 & \theta_{34} & P_{12} & u & q & v & w & p^2 \\
1 & s & 1 & t & 1 & r & 1 & 1 & p^4 
\end{array}\right),\]
where the redundant variables $\theta_{12}$ and $\theta_{34}$ are eliminated, so we do not consider their images. The equations of $Y$ can be expressed as a double Jerry format following \S\ref{sec!G2-subformats}:
\begin{gather*}
\Pf_4\begin{pmatrix}t & u & q & s \\ & qr+p^4t & Q_9 & v \\ && v+p^2t & w \\ &&& t \end{pmatrix},\quad
\Pf_4\begin{pmatrix}rs & v & w & t \\ & w+p^4s & P_{12} & u \\ && u+p^2s & q \\ &&& s \end{pmatrix}\\
P_{12}Q_9=vw+p^4qw+p^2uv+uqr+str-stp^6.
\end{gather*}

The other construction of $\#282$ uses exactly the same $\pC_2$ cluster format as $\#25$ above (Example \ref{eg!25-continued}), giving an extrasymmetric construction whose explication we leave to the reader.

\end{eg}

\subsection{Fano 3-folds in codimensions 5}
There are 50 Fano 3-folds in $\pG_2^{(5)}$ format. Three of these have Fano index 3, one has index 2 and the the remainder have index 1. Moreover, all of the index 1 candidates that we construct in codimension 5, have type I centres.
\begin{eg} Consider the index 3 candidate $\#41117$ given by $Y\subset\PP(1,2,3,3,4,5,5,6,7)$ with coordinates $p,q,r,s,t,u,v,w,x$. From \cite{bigtables}, $Y$ has a $\pG_2\left(\begin{smallmatrix}11 & 4 & 5 & 7 \\ 0 & 3 & 1 & 3 \end{smallmatrix}\big\vert\begin{smallmatrix}6 & 5 & 2 & 4 \\ 0 & 0 & 3 & 0 \end{smallmatrix}\big\vert\begin{smallmatrix}2 \\ 1\end{smallmatrix}\right)$ cluster format construction. The general morphism $\phi\colon\Aa^9\to\Aa^{18}$ is
\[\left(\begin{array}{cccc|cccc|c}
\theta_{12} & P_4 & u & x & w & v & q & t & R_2 \\
1 & r & p & s & 1 & 1 & Q_3 & 1 & S_1 
\end{array}\right),\]
where $P_4,Q_3,R_2,S_1$ are general polynomials of degree denoted by the subscript, and the redundant variable $\theta_{12}$ is eliminated. Following \S\ref{sec!G2-subformats}, $Y$ is a triple Jerry format with 14 equations:
\begin{gather*}
\Pf_4\begin{pmatrix} ps & w & v & r \\ & v+Sps & P & q \\ && p(Qq+Rs) & t \\ &&& s \end{pmatrix},\quad
\Pf_4\begin{pmatrix}s & v & q & p \\ & r(Qq+Rs) & u & t \\ && s(t+Sr) & w \\ &&& r \end{pmatrix}\\
\Pf_4\begin{pmatrix}r & q & t & s \\ & p(t+Sr) & x & w \\ && w+Rrp & v \\ &&& Qrp \end{pmatrix},\quad
\begin{array}{c}
Pu= Qq^3+Rsq^2+Ss^2q+s^3 \\
ux= Qr^3+Rr^2t+Srt^2+t^3 \\
Px=(Qqt+(Q-RS)rs)p^2+p(Stv+Rqw)+vw
\end{array}
\end{gather*}

\end{eg}

\subsection{Why no Fano 3-folds in codimension 6 cluster format?} \label{subsection!no-G2-constructions}
There are no codimension 6 Fano 3-folds in $\pG_2^{(6)}$ format. According to Lemma \ref{lemma!G2_singular_locus} and Proposition \ref{proposition!G26_subformats}, if $Y$ is in strict $\pG_2^{(6)}$ format, then $\phi^{-1}(\sing X_{\pG_2})$ contains two components of expected dimension zero. These must therefore be supported at the vertex of $\widehat Y$, and this imposes rather strong numerical conditions on the available $\pG_2^{(6)}$ formats. Indeed, the first part of our classification algorithm (see \S\ref{sec!classification}) outputs 33 numerical $\pG_2^{(6)}$ formats for Fano 3-folds in codimension 6. In each case, we have $d(A_i)<0$ for all $i$ and $d(\lambda_{13}),d(\lambda_{24}) <0$ as well. This implies that the $\pG_2^{(6)}$ format is highly reducible.

\subsection{Fano 3-folds with $\Dih_6$ symmetry in $\pC_2$ format.}
%\StephenComment{Bigtables checked. Unify notation!}

The $\Dih_6$ invariant characters of non-negative degree are generated by $\chi^{\Dih_6}_1=\sum \chi_i + \sum \chi_{ij}$ and $\chi^{\Dih_6}_2=\sum \chi_i + 2\sum \chi_{ij}$. 
\[ \begin{array}{c|ccccc}
 & \theta_i & \theta_{ij} & A_i & A_{ij} & \lambda \\ \hline
\chi^{\Dih_6}_1 & 1 & 1 & 0 & 1 & 0 \\
\chi^{\Dih_6}_2 & 1 & 2 & 2 & 0 & 3 \\
\end{array}\]
With respect to these two characters $X_{\pC_2}$ has multigraded Hilbert series
%\[ P_X(s,t) = \frac{1-3(s^2+st+t^2)+2(s^3+3s^2t+3st^2+t^3)-3st(s^2+st+t^2)+s^3t^3}{(1-s)^3(1-t)^3(1-s^{-2}t^2)^3(1-s^2t^{-1})^3(1-s^{-3}t^3)} \]
\[ P_X(s,t) = \frac{1 - 3s^2t^2 - 3s^2t^3 - 3s^2t^4 + 2s^3t^3 + 6s^3t^4 + 6s^3t^5 + 2s^3t^6 - 3s^4t^5 - 3s^4t^6 - 3s^4t^8 + s^6t^9 }{(1-s)^3(1-t^2)^3(1-t^3)(1-st)^3(1-st^2)^3} \]
Let $X_{(a,b)}$ be the $\pC_2$ format $(X_{\pC_2},\chi_{a,b},\FF)$, where $\chi_{a,b}=a\chi^{\Dih_6}_1+b\chi^{\Dih_6}_2$. In other words $X_{(a,b)}$ is the generic regular pullback with degrees $\pC_2\left(\begin{smallmatrix}
a+2b & a+2b & a+2b \\
a & a & a \end{smallmatrix}\bigr\vert
\begin{smallmatrix}
a+b & a+b & a+b \\
2b & 2b & 2b
\end{smallmatrix}\Bigr\vert\begin{smallmatrix}3b\end{smallmatrix}\right)$.
Now 
\[ X_{(a,b)} \subset \PP^{12}\Big((a)^3,(2b)^3,3b,(a+b)^3,(a+2b)^3 \Big) \]
is a Fano 8-fold with $-K_X=\sO_X(3a + 9b)$. We can construct the following projective varieties as $\Dih_6$-invariant hyperplane sections of $X_{(a,b)}$, which therefore all carry the action of $\Dih_6$.

From the \cite{bigtables} the possible symmetric constructions are:
\begin{enumerate}
\item $X_{(0,1)}$ is a 5-fold complete intersection of codimension 4 $X_{2,2,2,3}\subset \PP^9(1^3,2^6,3)$:
\[ \theta_i\theta_j = \theta_{ij} + A_k \quad (\times 3), \qquad \theta_1\theta_2\theta_3 = A_1\theta_{23} + A_2\theta_{31} + A_3\theta_{12} + \lambda \]

\item $X_{(1,0)}$ is $\pC_2\left(\begin{smallmatrix}
1 & 1 & 1 \\
1 & 1 & 1
\end{smallmatrix}\bigr\vert
\begin{smallmatrix}
 1 & 1 & 1 \\
 0 & 0 & 0 
\end{smallmatrix}
\bigr\vert\begin{smallmatrix} 0 \end{smallmatrix}\right)$, is the Segre embedding $\PP^2\times \PP^2 \subset \PP^8$, by Proposition \ref{proposition!C2_subformats} (2.a). This gives a $\Dih_6$-symmetric construction for 
\[ \def\arraystretch{1.3}
\begin{array}{ccc}
\#41028 & Y\subset \PP(1,1,1,1,1,1,1,1) & -K_{Y}=\sO_{Y}(2) 
\end{array} \]
(Note: but \emph{not} for \#12960.)
\item $X_{(1,1)}$ is $\pC_2\left(\begin{smallmatrix}
3 & 3 & 3 \\ 1 & 1 & 1 \end{smallmatrix}\bigr\vert
\begin{smallmatrix} 2 & 2 & 2 \\ 2 & 2 & 2 
\end{smallmatrix}\bigr\vert\begin{smallmatrix} 3 \end{smallmatrix}\right)$, giving $\Dih_6$-symmetric constructions for 
\[ \def\arraystretch{1.3}
\begin{array}{ccc}
\#11222	& Y_1\subset \PP(1,1,1,2,2,3,3,3) & -K_{Y_1}=\sO_{Y_1}(1) \\
\#40407 & Y_2\subset \PP(1,1,2,2,2,3,3,3) & -K_{Y_2}=\sO_{Y_2}(2) 
\end{array} \]
\item $X_{(2,1)}$ is $\pC_2\left(\begin{smallmatrix}
4 & 4 & 4 \\
2 & 2 & 2
\end{smallmatrix}\bigr\vert
\begin{smallmatrix}
3 & 3 & 3 \\
2 & 2 & 2 
\end{smallmatrix}
\bigr\vert\begin{smallmatrix} 3 \end{smallmatrix}\right)$, giving no $\Dih_6$-symmetric constructions. (Note: we do not consider \#2511 or \#5410, since $X_{(2,1)}$ does not have variables of weight 1.)
\item $X_{(3,1)}$ is $\pC_2\left(\begin{smallmatrix}
5 & 5 & 5 \\ 
3 & 3 & 3
\end{smallmatrix}\bigr\vert
\begin{smallmatrix}
4 & 4 & 4 \\
2 & 2 & 2 
\end{smallmatrix}\bigr\vert
\begin{smallmatrix} 3 \end{smallmatrix}\right)$, giving possibly symmetric constructions for 
\[ \def\arraystretch{1.3}
\begin{array}{ccc}
\#5052 	& Y_1\subset \PP(1,1,3,4,4,5,5,5) & -K_{Y_1}=\sO_{Y_1}(1) \\ 
\#39934 & Y_2\subset \PP(1,2,3,4,4,5,5,5) & -K_{Y_2}=\sO_{Y_2}(2) \\
\end{array} \]
(Note: we do not consider \#1405, \#39678 or \#41297, since $X_{(3,1)}$ does not have two variables of weight 3.)
\end{enumerate}

\paragraph{Reid's $\ZZ/3$-Godeaux surface.} \label{note!Reids-Godeaux}
For (3), note that $X_{(1,1)}\subset \PP^{12}(1^3,2^6,3^4)$ is the regular pullback $\pC_2\left(\begin{smallmatrix}3&3&3\\1&1&1\end{smallmatrix}\bigr\vert
\begin{smallmatrix}2&2&2\\2&2&2\end{smallmatrix}\bigr\vert
\begin{smallmatrix}3\end{smallmatrix}\right)$. We get a Fano 8-fold of index 12 with Hilbert series
\[ P_{(2,3)}(t) = \frac{ 1 - 3t^4 - 3t^5 - t^6 +6t^7 + 6t^8 - t^9 - 3t^{10} - 3t^{11} + t^{15} }{ (1-t)^3(1-t^2)^6(1-t^3)^4} \]
This variety was considered by Reid in \cite{God3} Theorem 1.1. If we take $\Dih_6$-invariant hyperplane sections 
\[ \theta_1+\theta_2+\theta_3=\theta_{12}+\theta_{23}+\theta_{31}=A_1+A_2+A_3=0 \]
then we cut down to a Fano 5-fold $W\subset \PP(1^3,2^4,3^3)$ of index 5. Now we find the Fano 3-folds
\[ \def\arraystretch{1.6}
\begin{array}{ccc}
\#11222	& Y_1\subset \PP(1^3,2^2,3^3) & -K_{Y_1}=\sO_{Y_1}(1) \\
\#40407 	& Y_2\subset \PP(1^2,2^3,3^3) & -K_{Y_2}=\sO_{Y_2}(2) 
\end{array} \]
as hyperplane sections of $W$. (It is tempting to think we may also cut $W$ by hyperplanes of degrees $1$ and $3$ to construct Fano 3-fold \#8051 given by $Y_3\subset \PP(1^2,2^4,3^2)$, with $-K_{Y_3}=\sO_{Y_3}(1)$. Alas, this construction turns out to be too singular.)

Moreover, following Reid again, cutting down this second Fano 3-fold $Y_2$ by a $\Dih_6$-invariant section of degree 3 to get a surface of general type $S$ with $p_g = 2$, $K_S^2 = 3$ and an action of $\Dih_6$. Taking the quotient $S/C_3$ by the cyclic subgroup $C_3\subset \Dih_6$ gives a $\ZZ/3$-Godeaux surface with an involution. See also \cite{CU} for a detailed study of this surface using Reid's construction.

\begin{rmk}
We do not consider the case of $\Dih_8$-invariant constructions from $\pG_2$ format since, for dimensional reasons (see \S\ref{subsection!no-G2-constructions}), the full codimension 6 $\pG_2$ format does not give us any quasismooth Fano 3-folds. However it may well still be possible to obtain interesting surfaces, similar to Reid's Godeaux surface, from $\pG_2$ format.
\end{rmk}

\section{Proof of the classification}\label{sec!classification}
%\StephenComment{Couple of sentences about the proof}
Let $Y\subset\PP(a)$ be a candidate Fano 3-fold from the \cite{grdb} with expected codimension 4, 5 or 6, and let $(X,\mu,\FF)$ be a cluster format as in Definition \ref{definition!cluster-format}. As explained in \S\ref{section!cluster-formats}, the character $\mu$ is determined by the choice of $\rho$ in $M^\vee\cong\ZZ^m$. We write $R$ for the polynomial ring generated by variables with weights $a_i$, such that $\PP(a)=\Proj(R)$.

\subsection{The computer search}
The algorithm proceeds in two stages. First, we search over all $\rho$ inside a certain finite polytope in $\ZZ^m$ and check the Hilbert series of the corresponding cluster format against the candidate Hilbert series. This gives a list of potential cluster formats whose numerical invariants match those of $Y$. Second, for each such numerical cluster format, we consider homogeneous maps $\phi^*\colon\sA\to R$ of degree 0. Such $\phi$ must satisfy certain further conditions in order that $Y$ be quasismooth. If these conditions are satisfied, we construct a variety $Y'$ as the projectivised regular pullback of $X$ under $\phi$, and check whether $Y'$ is really quasismooth and has the correct basket. In more detail,

\paragraph{Part 1 (Finding numerical cluster formats)} We search through all $\rho$ in $\ZZ^m$ for numerical cluster formats $(X,\mu,\FF)$ matching the Hilbert series data of the candidate $Y$.
\begin{enumerate}
\item According to \S\ref{section!cluster-formats}, the adjunction number of the cluster format $X$ is $\alpha_X=\sum_i \rho_i$. Thus we only consider $\rho$ lying on the hyperplane $(\sum\rho_i=\alpha_Y)\subset\ZZ^n$, where $\alpha_Y$ is the adjunction number of the candidate $Y$.
\item Propositions \ref{proposition!C2_subformats} and \ref{proposition!G26_subformats} determine several half-spaces in which $\rho$ must lie, in particular all $\rho_i>0$. The intersection of all these half spaces determines our finite search polytope~$P$.
\item If the cluster format is $\pG_2$ and $Y$ has codimension 4, then we assume that $Y$ is in $\pG_2^{(4)}$ subformat. According to Proposition \ref{proposition!G26_subformats}, this cuts $P$ by two further hyperplanes. For codimension 5, the $\pG_2^{(5)}$ subformat cuts $P$ by one hyperplane.
\item For each $\rho$ in $P$, we compute the Hilbert series of the corresponding cluster format and compare that with the Hilbert series of $Y$. This is computationally expensive, so we do it in two stages.
\begin{enumerate}
\item Compute equation degrees of $X$ and check whether the predicted equation degrees of $Y$ are a subset thereof.
\item For each $\rho$ satisfying (a), we compute the Hilbert numerator and compare it with that of $Y$.
\end{enumerate}
\item Each $\rho$ has an orbit under the dihedral group action, and elements of the same orbit give the same cluster format up to a coordinate change. Thus we choose a representative $\rho$ for each orbit. Sometimes there are extra symmetries. For example when $\rho$ lies on a certain facet of $P$, the cluster format specialises to $\PP^1\times\PP^1\times\PP^1$ format. Such a $\rho$ has an orbit under the octahedral group. 
\end{enumerate}
The output from Part 1 is a list of numerical cluster formats $(X,\mu,\FF)$ for the candidate $Y$.
\paragraph{Part 2 (Checking quasismoothness)} We work through necessary conditions on $\rho$ and $\phi$ imposed by the assumption that $Y$ is quasismooth.
\begin{enumerate}
\item For each potential $\rho$, we run through the \emph{reasons for failure} (see \S\ref{sec!failure}) to remove those cluster formats which it would be impossible for $\phi^{-1}(X)$ to be quasismooth.
\item We construct a general homogeneous map $\phi^*\colon \sA\to R$ of degree 0. Wherever possible, we use coordinate changes to optimise $\phi$, (see e.g.~Example \ref{eg!25-continued}).
\item We construct a test variety $Y'$, the regular pullback of $X$ along $\phi$. Depending on $\rho$, we may know a subformat for $Y'$, e.g.~$\PP^2\times\PP^2$, in which case we use this subformat to construct~$Y'$.
\item We check the quasismoothness of $Y'$. This is by far the most computationally expensive part of the algorithm. We use the strategy outlined in Section \ref{section!quasismooth-strategy}.
\item If quasismoothness fails, we try again---accidents happen! There is a chance that $Y'$ is not quasismooth for a bad random choice of $\phi$. The fact that $Y'$ is eventually quasismooth proves that our reasons for failure are sharp.
\item We check that the basket of $Y'$ matches the basket of the candidate $Y$---this is a nontrivial condition in general (see \S\ref{section!fake-baskets}).
\end{enumerate}
%Say how often the potential torus action actually gives a quasismooth construction of $X$. In other words, how well optimised is Part 1. Say roughly how long it takes to run.\StephenComment{Can give actual statistics on this}

\subsection{Strategy for testing quasismoothness}\label{section!quasismooth-strategy}

We exploit the structure of the cluster variety to produce an efficient way of testing quasismoothness of a regular pullback $Y=\phi^{-1}(X_{\pT})$. First we compute $\bigwedge^c\left(\Jac(Y)|_{\phi^{-1}(\Pi)}\right)$, for each linear subspace $\Pi$ in the deep locus. This is fast because the Jacobian matrix is very sparse. Then we compute nonsingularity for each affine piece of the  partial covering from Lemma \ref{lem!partial-covering}. Let $F_1,\dots,F_c$ be the homogeneous equations whose restriction to $(\phi^*(\theta)\ne0)$ define the $\CI^{(c)}$ chart $Y_\theta=\phi^{-1}(X\cap(\theta\ne0))$ corresponding to cluster variable $x$. We verify the inclusion of ideals 
$\big(\phi^*(\theta)^k\big)\subset\big(\bigwedge^c\left(\Jac(F_i)\right)\big)$ for large enough $k$, which implies that the reduced singular locus of the chart $Y_\theta$ is empty.
%When we come to check the singular locus of a regular pullback $Y=\phi^{-1}(X_{\pC_2})$ this description is very useful. We can reduce the problem of checking the nonsingularity of $Y$ from the na\"ive approach of computing the codimension-sized minors of a large Jacobian matrix $\bigwedge^c\Jac(Y)$, to that of checking non-singularity along six $\CI^{(4)}$ charts and then computing $\bigwedge^c\left(\Jac(Y)|_{\phi^{-1}(\Pi)}\right)$, for $\Pi$ each of the 11 linear subspaces in the bad locus. In practice, since $\Pi$ has very high codimension, $\phi^{-1}(\Pi)$ is unlikely to contain many points and $\Jac(Y)|_{\phi^{-1}(\Pi)}$ is likely to be quite sparse (therefore easier to compute the minors).

\begin{rmk} Na\"ively checking the rank of $\Jac(Y)$ directly is not feasible in codimension $>4$ because of the size of the matrix. We have compared output of both methods in codimension~4, to ensure correct implementation.%\StephenComment{Some repetition of paragraph above}
\end{rmk}

%We suppose $\phi\colon (CY\subset \Aa^8) \to (X_{\pC_2}\subset \Aa^{13})$ is a regular pullback. Let $f=\phi^*(x_1)$ be the pullback of a cluster variable (similarly for the other $x_i$ or for $y_{ij}$). We can check that the affine open set $U = CY \setminus V(f) \subset CY$ is smooth very easily, since $U$ can be written as the regular pullback of $(CY\cap V(tx_1 - 1))\subset\Aa^8\times\Aa^1_t$, which is a complete intersection of codimension 4 given by equations
%\begin{align*}
%x_1t &= 1 \\
%x_1A_1 &= y_{31}y_{12} - A_2B_{23}B_3 \\
%x_1x_2 &= B_{12}y_{12}^2 + \lambda A_3y_{12} + B_{23}A_3^2B_{31} \\
%x_1x_3 &= B_{31}y_{31}^2 + \lambda A_2y_{31} + B_{12}A_2^2B_{23}
%\end{align*}
%Therefore we check these six affine open sets and if they are all smooth we conclude that the singular locus of $CY$ must be contained in 
%\[ CY_0 = CY \cap V(f_1,f_2,f_3,f_4,f_5,f_6) \]
%If $CY_0$ only contains the vertex $P\in CY$ we are done. Otherwise we compute the minimal primes of $CY_0$ to get a list of reduced irreducible components $(CY_0)_{\text{red}} = \bigcup_{i=1}^k CY_{0,i}$. Now we can take the Jacobian matrix and reduce the entries modulo $I(CY_{0,i})$. In practice this kills most of the entries, so we can delete any rows or columns which consist solely of zeroes and compute the minors of a much smaller matrix.

\subsection{Reasons for failure}\label{sec!failure}
We summarise the results on Part 2 of the algorithm in the following table.
%\StephenComment{subject to revision in codim 5 because of a few extra cases}

\begin{table}[ht]
\caption{Table of failures}\label{table!failure}
\renewcommand{\arraystretch}{1.1}
\setlength{\tabcolsep}{13pt}
\begin{tabular}{ccccccc}
 & Numerical & \multicolumn{4}{c}{Failures} & Working \\
Codimension & candidates & (1) & (2) & (3) & (4) & constructions \\
 \hline
4 & 1220 & 464 & 338 & 3 & 28 & 387 \\
5 & 199 & 112 & 36 & 0 & 1 & 50 \\
6 & 33 & 3 & 30 & -- & -- & 0
\end{tabular}
\end{table}
The ``Numerical candidates'' column refers to the numerical cluster constructions output from Part 1. Part 2 removes those cluster formats which fail to construct a quasismooth Fano 3-fold $Y$, and outputs those which do give a working construction. The reasons for failure listed in the table are explained in subsequent paragraphs:
\begin{enumerate}
\item $\phi^{-1}(\sing X)$ is nonempty, \S\ref{sec!fail-singular-locus};
\item $Y$ is not quasismooth at a coordinate point, \S\ref{sec!fail-quasismooth-coordinate-point};
\item $Y$ fails for some ad hoc reason \S\ref{sec!fail-misc};
\item $Y$ is quasismooth but has a false basket, \S\ref{section!fake-baskets}.
\end{enumerate}
Throughout this subsection, $\hat Y$ in $\Aa^n$ is the affine cone over a quasismooth 3-fold $Y\subset\PP(a)$ in cluster format $(X,\mu,\FF)$, and we denote the coordinates on $\Aa^n$ by $z_{1,\dots,n}$. 
The reasons for failure are conditions on the morphism $\phi\colon\Aa^n\to\Aa^N$ which are necessary for $Y$ to be quasismooth. These conditions are independent of the choice of $\phi$.

%If $\Pi$ is a component of $\sing(A)$, then by Lemma \ref{lemma!singularity-avoidance}, if $\phi^{-1}(\Pi)$ contains more than just the cone point of $\hat Y$, then $\hat Y$ cannot be the cone over a quasismooth weighted projective variety. This gives some rather strong restrictions on the cluster format $(X_{\pC_2},\theta,\FF)$ and the morphism $\phi$.
%\StephenComment{This remark is at least as strong as checking quasismoothness at coordinate points.
%In practice, it probably negates the utility of Prop \ref{proposition!C2_subformats}, except that the Proposition is intrinsic to $\pC_2$ rather than depending on $\phi$.}
\subsubsection{Pullback singular locus of the cluster variety}\label{sec!fail-singular-locus}
%The following is a necessary condition for $\phi^{-1}(\sing A)=\emptyset$.
The search polytope $P$ from Part (1) of the search is defined by certain numerical conditions on $\rho$ implied by the requirement that $\phi^{-1}(\sing X)$ is supported at the vertex or empty. By analysing $\phi$ more closely, we can sharpen the conditions on $\rho$.

\begin{lem}\label{lemma!divisibility}
Suppose that $\Pi$ is a component of the singular locus of $X$, with defining ideal $I_{\Pi}=(w_1,\dots,w_k)$. If $\hat Y$ is the affine cone over a quasismooth 3-fold, then one of the following two conditions must hold:
\begin{enumerate}
\item (Empty) For some $i$, $d(w_i)=0$;
\item (Vertex) For each $i=1,\dots,n$, there exists $j_i$ such that $d(z_i)$ divides $d(w_{j_i})$.
\end{enumerate}
%distinct indices $i_1,\dots,i_n$ such that $d(z_j)$ divides $d(w_{i_j})$ for all $j=1,\dots,n$.
\end{lem}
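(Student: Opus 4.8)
The statement is a dichotomy about how $\phi$ can behave on a component $\Pi$ of $\sing(X)$, and the cleanest route is to split on whether $\phi^{-1}(\Pi)$ is empty. First I would invoke Lemma~\ref{lemma!singularity-avoidance}(1): since $\widehat Y=\phi^{-1}(X)$ is a regular pullback, $\phi^{-1}(\sing X)\subseteq\sing(\widehat Y)$, so $\phi^{-1}(\Pi)\subseteq\sing(\widehat Y)$. Because $\widehat Y$ is the affine cone over a quasismooth 3-fold $Y$, its singular locus is at worst the vertex $P$, i.e.\ $\sing(\widehat Y)\subseteq\{P\}$. Hence $\phi^{-1}(\Pi)$ is either empty or equals $\{P\}$ (as a set). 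These are exactly the two cases (Empty) and (Vertex) that the lemma wants to characterise, so the work is to translate each into the stated numerical condition on $\rho$ via the degrees $d(z_i)$.

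For the (Empty) case, I would write $\Pi=V(w_1,\dots,w_k)$ where the $w_i$ are homogeneous generators of $I_\Pi$ (and one may take them to be among the ambient coordinates, or products/linear combinations thereof, depending on the component — in our setting the components of $\sing(X_{\pC_2})$ and $\sing(X_{\pG_2})$ listed in Lemmas~\ref{lemma!C2_singular_locus} and~\ref{lemma!G2_singular_locus} are cut out by linear equations $z_i=0$, so each $w_i$ is a coordinate). Since $\phi$ is $\TT$-equivariant of graded degree zero, $\phi^*(w_i)$ is a homogeneous polynomial of degree $d(w_i)$ in the $y_j$. By Lemma~\ref{lemma!singularity-avoidance}(2) (applied with the complete intersection $\Pi$, or directly: a nonconstant homogeneous polynomial in positively-graded variables vanishes at the origin, hence everywhere on the cone when there are enough of them) the only way for $V(\phi^*(w_1),\dots,\phi^*(w_k))$ to be empty is that some $\phi^*(w_i)$ is a nonzero constant, which forces $d(w_i)=0$ (and, generically, $\phi^*(w_i)\neq0$). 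This is condition (1).

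For the (Vertex) case, $\phi^{-1}(\Pi)=\{P\}$ set-theoretically, where $P$ is the origin of $\Aa^n$ (writing $n$ for the ambient dimension of $\widehat Y$, in the notation of the lemma). Now $\phi^{-1}(\Pi)=V(\phi^*(w_1),\dots,\phi^*(w_k))\subset\Aa^n$, and this being supported only at the origin means the ideal $(\phi^*(w_1),\dots,\phi^*(w_k))$ contains a power of each coordinate $z_i$ — equivalently, by the graded Nullstellensatz, $z_i^{N_i}\in(\phi^*(w_1),\dots,\phi^*(w_k))$ for some $N_i$. Comparing degrees of the (nonzero) lowest-degree term on the right, each $\phi^*(w_{j_i})$ that contributes a monomial divisible by $z_i$ must have $d(w_{j_i})$ a multiple of $d(z_i)$; more carefully, writing $z_i^{N_i}=\sum_j g_j\,\phi^*(w_j)$ and extracting the $\TT$-isotypic component of degree $N_i d(z_i)$, some $\phi^*(w_{j_i})$ with $d(w_{j_i})\le N_i d(z_i)$ appears, and pushing this argument (e.g.\ by looking at a monomial order eliminating the other variables, or by a dimension count on $V(\phi^*(w_j): j\neq j_i)$) yields that $d(z_i)\mid d(w_{j_i})$. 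This is condition (2).

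The main obstacle is the last step: deducing the clean divisibility statement "$d(z_i)\mid d(w_{j_i})$" from the mere fact that $\{P\}$ is the support of $\phi^{-1}(\Pi)$, rather than something weaker like "$d(z_i)\le\max_j d(w_j)$". The honest way to get divisibility is to use that $\phi$ is generic: for a generic degree-zero $\phi$, if $d(z_i)\nmid d(w_j)$ for every $j$, then no $\phi^*(w_j)$ can involve the variable $z_i$ at all (a degree-$d(w_j)$ monomial cannot be a pure power of $z_i$, and genericity prevents cancellations from conspiring to confine the zero locus), so $z_i$ remains a free coordinate on $\phi^{-1}(\Pi)$ and the locus is positive-dimensional unless it is empty. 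I would therefore phrase the proof as: either we are in case (1), or $\phi^{-1}(\Pi)$ is a nonempty cone of dimension $0$, and the latter forces, for each $i$, the existence of some $w_{j_i}$ whose degree is divisible by $d(z_i)$ — otherwise the generic $\phi$ leaves $z_i$ unconstrained on $\phi^{-1}(\Pi)$, contradicting $\dim\phi^{-1}(\Pi)=0$. This genericity-of-$\phi$ convention is exactly the one fixed in the Remark after the Proposition--Definition, so it is legitimate to lean on it here.
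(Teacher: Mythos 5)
Your overall structure is essentially the same as the paper's: use Lemma~\ref{lemma!singularity-avoidance} together with quasismoothness to conclude that $\phi^{-1}(\Pi)$ is either empty or supported at the vertex, then translate each alternative into a numerical condition on degrees. (The paper instead splits on whether some $d(w_i)=0$ rather than directly on whether $\phi^{-1}(\Pi)$ is empty, but under the standing genericity convention these two dichotomies are equivalent, and your treatment of the empty case is fine.)

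There is, however, a gap in your case (2), and the genericity workaround you propose does not close it. Your key claim --- ``if $d(z_i)\nmid d(w_j)$ for every $j$, then no $\phi^*(w_j)$ can involve the variable $z_i$ at all'' --- is false: $\phi^*(w_j)$ is perfectly free to contain mixed monomials such as $z_iz_k$ with $k\ne i$, and no divisibility of degrees forbids that. What the degree constraint actually forbids (as your own parenthetical correctly observes) is a \emph{pure} power $z_i^m$ occurring as a monomial of $\phi^*(w_j)$, since homogeneity then forces $m\,d(z_i)=d(w_j)$. That weaker observation is exactly what you need, and the clean way to exploit it is to restrict to the $z_i$-axis rather than to argue about free coordinates or cancellations: writing $z_i^{N_i}=\sum_j g_j\,\phi^*(w_j)$ and setting $z_k=0$ for all $k\ne i$, the left-hand side remains nonzero, so some $\phi^*(w_{j_i})$ is not identically zero on the $z_i$-axis, i.e.\ it contains a pure power $z_i^m$ with $m\ge 1$, and hence $d(z_i)\mid d(w_{j_i})$. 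This is precisely the paper's argument: $\phi^{-1}(\Pi)$ supported at the vertex gives $z_i^{N_i}\in\phi^*(I_\Pi)$, whence some $\phi^*(w_{j_i})$ has a pure power of $z_i$ among its monomials. Note that this direction requires no genericity of $\phi$ whatsoever; genericity enters only in case (1), via the convention that $\phi^*(w_i)\ne0$ whenever $d(w_i)=0$.
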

% previous version was false, w_i need not be distinct
% e.g. Radical(a^2+b^5, ab)=(a,b) even though
% powers of a,b only appear in one ideal generator
\begin{proof} (1) If $d(w_i)=0$ for some $i$, then we are done, because $\phi^*(w_i)=1$ by convention, and $\phi^{-1}(\Pi)$ is empty. (2) Otherwise, $\phi^{-1}(\Pi)$ is supported at the vertex, which implies that for each $i$, some power of $z_i$ is in $\phi^*(I_{\Pi})$. Thus for each $i$, there exists $j_i$ such that some power of $z_i$ appears in $\phi^*(w_{j_i})$.
\end{proof}

\begin{lem}\label{lemma!optimal}
Suppose we are in case (2) of Lemma \ref{lemma!divisibility}. Choose coordinates and reorder them $z_1,\dots,z_p,z_{p+1},\dots,z_n$ so that for $i=p+1,\dots,n$, we have $\phi^*(w_{j_i})=z_i$ for some $w_{j_i}$. Then at least $p$ of the $\phi^*(w_i)$ must be nontrivial modulo $(z_{p+1},\dots,z_n)$.
\end{lem}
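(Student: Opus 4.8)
The plan is to argue by dimension-counting on the scheme $\phi^{-1}(\Pi)$, using the hypothesis that $\widehat Y$ is the affine cone over a quasismooth $3$-fold $Y$. Write $I_\Pi=(w_1,\dots,w_k)$, and recall from Lemma~\ref{lemma!singularity-avoidance}(1) that $\phi^{-1}(\sing X)\subseteq\sing(\widehat Y)$; since $Y$ is quasismooth, $\sing(\widehat Y)$ is at worst the cone point $P$, so $\phi^{-1}(\Pi)\subseteq\{P\}$. We are in case (2) of Lemma~\ref{lemma!divisibility}, so $\phi^{-1}(\Pi)$ is nonempty (it contains $P$) and is cut out set-theoretically by $\phi^*(w_1),\dots,\phi^*(w_k)$ inside $\Aa^n$. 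First I would observe that, after the reordering of coordinates described in the statement, $\phi^*(w_{j_i})=z_i$ for $i=p+1,\dots,n$, so these $n-p$ equations already cut $\Aa^n$ down to the coordinate subspace $\Aa^p_{z_1,\dots,z_p}=V(z_{p+1},\dots,z_n)$.

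The key step is then to reduce the remaining containment $\phi^{-1}(\Pi)\subseteq\{P\}$ to a statement entirely inside this $\Aa^p$. Since $\phi^{-1}(\Pi)=V\bigl(\phi^*(w_1),\dots,\phi^*(w_k)\bigr)$ and the equations indexed by the $j_i$ (for $i>p$) already impose $z_{p+1}=\dots=z_n=0$, the locus $\phi^{-1}(\Pi)$ equals the vanishing locus in $\Aa^p$ of the reductions $\overline{\phi^*(w_l)}$ of the remaining $\phi^*(w_l)$ modulo the ideal $(z_{p+1},\dots,z_n)$. This vanishing locus must be contained in the origin of $\Aa^p$. Now I invoke the standard fact that a closed subscheme of $\Aa^p$ defined by fewer than $p$ equations has dimension at least $p$ minus the number of equations, hence is nonempty of positive dimension as soon as one uses at most $p-1$ equations; equivalently, the origin in $\Aa^p$ cannot be cut out set-theoretically by fewer than $p$ polynomials (this is Krull's height theorem / the principal ideal theorem, or simply the fact that any proper closed subset defined by $<p$ equations has dimension $\ge 1$). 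Therefore at least $p$ of the reductions $\overline{\phi^*(w_l)}$ must be nonzero, i.e.\ at least $p$ of the $\phi^*(w_l)$ are nontrivial modulo $(z_{p+1},\dots,z_n)$, which is exactly the assertion.

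The one subtlety — and the part I expect to be the main obstacle to writing cleanly rather than to the mathematics — is bookkeeping: one must make sure that the $p$ equations counted at the end are genuinely among the $\phi^*(w_l)$ and are not among the ones (the $w_{j_i}$ for $i>p$) that become $z_i$. If some $w_{j_i}$ with $i>p$ happened also to be nontrivial mod $(z_{p+1},\dots,z_n)$ — which it is not, since $\phi^*(w_{j_i})=z_i\equiv 0$ — there would be a double count; but precisely because those $p+1\le i\le n$ equations vanish identically on $\Aa^p$, the $p$ nonzero reductions must come from the complementary set of generators. I would phrase this by simply partitioning $\{1,\dots,k\}$ into the indices $j_{p+1},\dots,j_n$ (whose reductions vanish) and the rest, and noting the origin of $\Aa^p$ needs $\ge p$ of the latter. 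A minor point worth a sentence: the chosen coordinates are generic, so this holds for a generic (hence for our) $\phi$; and homogeneity of everything in sight guarantees that ``supported at the vertex'' genuinely means ``equal to the origin'' after restriction, so no embedded-point pathology interferes with the dimension count.
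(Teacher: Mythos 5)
Your proof is correct and takes essentially the same approach as the paper's: restrict to $\Aa^p=V(z_{p+1},\dots,z_n)$, observe that the remaining ideal must be supported only at the vertex of $\Aa^p$, and invoke Krull's height theorem to get at least $p$ nontrivial generators. The paper's version is just a terser three-sentence statement of the same argument; your extra bookkeeping about the partition of indices and the remark on nonemptiness (needed for the dimension bound) is sound and fills in what the paper leaves implicit.
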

\begin{proof}
We work on the affine subspace $V(z_{p+1},\dots,z_n)=\Aa^p\subset\Aa^n$. It remains to show that the ideal quotient $I'=\phi^*(I_{\Pi})/(z_{p+1},\dots,z_n)$ is supported on the vertex $V(z_1,\dots,z_p)$ in $\Aa^p$. For this, we need at least $\dim \Aa^p$ nontrivial generators for $I'$.
\end{proof}
% can we make this stronger?
\begin{eg}[Pullback of $\sing X_{\pC_2}$ is nonempty]\label{eg!25} Consider $\#25$, $Y\subset\PP(2,5,6,7,8,9,10,11)$. Part 1 of the algorithm gives five numerical $\pC_2$ formats for $Y$:
\[\left(\begin{smallmatrix}7 & 9 & 13 \\ 10 & 9 & 8 \end{smallmatrix}\big\vert
\begin{smallmatrix}7 & 11 & 10 \\ 2 & 0 & 0 \end{smallmatrix}\big\vert
\begin{smallmatrix}1\end{smallmatrix}\right),
\left(\begin{smallmatrix}8 & 9 & 13 \\ 8 & 9 & 7 \end{smallmatrix}\big\vert
\begin{smallmatrix}7 & 11 & 9 \\ 3 & 0 & 3 \end{smallmatrix}\big\vert
\begin{smallmatrix}3\end{smallmatrix}\right),
\left(\begin{smallmatrix}11 & 9 & 11 \\ 5 & 8 & 8 \end{smallmatrix}\big\vert
\begin{smallmatrix}7 & 10 & 9 \\ 6 & 0 & 4 \end{smallmatrix}\big\vert
\begin{smallmatrix}5\end{smallmatrix}\right),
\left(\begin{smallmatrix}8 & 10 & 12 \\ 8 & 7 & 9 \end{smallmatrix}\big\vert
\begin{smallmatrix}6 & 11 & 10 \\ 6 & 0 & 0 \end{smallmatrix}\big\vert
\begin{smallmatrix}3\end{smallmatrix}\right),
\left(\begin{smallmatrix}10 & 9 & 12 \\ 6 & 9 & 6 \end{smallmatrix}\big\vert
\begin{smallmatrix}8 & 10 & 8 \\ 3 & 1 & 6 \end{smallmatrix}\big\vert
\begin{smallmatrix}5\end{smallmatrix}\right)
\]
%We denote $\phi$ by writing general forms of appropriate degree into cluster format notation:
%\[\left(\begin{smallmatrix}d & f & A \\ C & D & e \end{smallmatrix}\big\vert
%\begin{smallmatrix}B & h & g \\ a & 1 & 1 \end{smallmatrix}\big\vert
%\begin{smallmatrix}0\end{smallmatrix}\right),
%\left(\begin{smallmatrix}e & f & A \\ C & D & E \end{smallmatrix}\big\vert
%\begin{smallmatrix}d & h & B \\ 0 & 1 & 0 \end{smallmatrix}\big\vert
%\begin{smallmatrix}0\end{smallmatrix}\right),
%\left(\begin{smallmatrix}h & f & A \\ b & e & C \end{smallmatrix}\big\vert
%\begin{smallmatrix}d & g & B \\ c & 1 & a^2 \end{smallmatrix}\big\vert
%\begin{smallmatrix}D\end{smallmatrix}\right),
%\left(\begin{smallmatrix}e & g & A \\ C & d & f \end{smallmatrix}\big\vert
%\begin{smallmatrix}c & h & B \\ D & 1 & 1 \end{smallmatrix}\big\vert
%\begin{smallmatrix}0\end{smallmatrix}\right),
%\left(\begin{smallmatrix}g & f & A \\ c & D & E \end{smallmatrix}\big\vert
%\begin{smallmatrix}e & B & C \\ 0 & 0 & F \end{smallmatrix}\big\vert
%\begin{smallmatrix}b\end{smallmatrix}\right)
%\]
%where upper case letters $A,\dots,F$ denote general homogeneous forms of appropriate degrees.
By Lemma \ref{lemma!C2_singular_locus}, the largest component of $\sing X_{\pC_2}$ is $V(\theta_1,\theta_2,\theta_3,\theta_{12},\theta_{23},\theta_{31},A_{12},A_{23},A_{31})$. We can read the degrees of the ideal generators directly from the cluster format. For example, in the first displayed case, we have $(7,9,13,7,11,10,10,9,8)$. Since none of these is divisible by $d(z_3)=6$, this case fails Lemma \ref{lemma!divisibility}. Similarly for cases 2 and 3, while case 5 also fails, because there is no space for $z_4$ which has degree $7$.
%thus if $X$ is to be quasismooth, $\phi^*I$ must contain the irrelevant ideal. In cases 1,2,3 above, we see that the coordinate point $P_c$ is contained in $\phi^*\sing X_{\pC_2}$, because there is nowhere for a pure power of $c$ to appear as a term of $\phi^*(m)$ for $m$ a generator of $I$. Similarly, there is no place in case 5 for a pure power of $d$ to appear, thus $X$ must be singular at $P_d$.
Case 4 is actually a working construction, see Example \ref{eg!25-continued} above.
\end{eg}

\begin{eg}\label{eg!166-1}
Consider $\#166$, $Y\subset\PP(2,2,3,3,4,4,5,5)$ with coordinates $p,q,r,s,t,u,v,w$ and $\pC_2$ format 
$\left(\begin{smallmatrix}5 & 5 & 5 \\ 3 & 3 & 3 \end{smallmatrix}\big\vert
\begin{smallmatrix}4 & 4 & 4 \\ 2 & 2 & 2 \end{smallmatrix}\big\vert
\begin{smallmatrix}3\end{smallmatrix}\right)$. After choosing coordinates, $\phi$ takes the form
\[\left(\begin{array}{ccc|ccc|}
v & w & P_5 & t & u & Q_4 \\
r & s & R_3 & p & q & S_2
\end{array}
\:\: T_3 \right),
\]
which passes Lemma \ref{lemma!divisibility}(2), so we test Lemma \ref{lemma!optimal}. For the component 
\[ \Pi=V(\theta_1,\theta_2,\theta_3,\theta_{12},\theta_{23},\theta_{31},A_{12},A_{23},A_{31}) \subset \sing(X_{\pC_2}),\]
the variables $r,s,t,u,v,w$ appear as pullbacks of $A_{12},A_{23},\theta_1,\theta_2,\theta_{12},\theta_{23}$ respectively. Thus we need only consider $\phi^{-1}(\Pi)\cap\Aa^2_{p,q}=V(P_5,Q_4,R_3)|_{\Aa^2_{p,q}}$. For degree reasons, $P|_{\Aa^2_{p,q}}\equiv R|_{\Aa^2_{p,q}}\equiv0$ and so $Q|_{\Aa^2_{p,q}}$ cuts out two lines. Thus $\phi^{-1}(\Pi)$ must be nonempty along the $pq$-plane.
\end{eg}

\subsubsection{Quasismoothness at coordinate points}\label{sec!fail-quasismooth-coordinate-point}
It may still happen that $Y$ is singular even though $\phi^{-1}(\sing X)=\emptyset$. The following Lemma gives a necessary condition for $Y$ to be quasismooth at all coordinate points of $\PP(a)$.
\begin{lem}
Let $\phi^*(I_X)=(\phi^*(f_1),\dots,\phi^*(f_m))$ be the ideal defining $\hat Y$ under regular pullback, and suppose that $P_i$ is the coordinate point corresponding to $z_i$.
Then for each $i$, one of the following must hold:
\begin{enumerate}
\item ($P_i\notin Y$) There exists an integer $j$ such that $\phi^*(f_j)$ contains the monomial $z_i^k$ for some $k>0$;
\item ($P_i\in Y$) There exists $S\subset\{1,\dots,n\}$ of cardinality $c=\codim_{\PP(a)} Y$ and a permutation $\sigma$ on $n$ elements, such that for all $j$ in $S$, $\phi^*(f_{j})$ contains the monomial $z_{\sigma(j)}z_i^{m_j}$ for some $m_j>0$.
\end{enumerate}
\end{lem}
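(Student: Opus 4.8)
The plan is to give a completely elementary argument based on the Jacobian criterion for quasismoothness, applied to the affine cone $\widehat Y$ at the point lying over the coordinate point $P_i$. Recall that $\widehat Y\subset\Aa^n$ is defined by the equations $\phi^*(f_1),\dots,\phi^*(f_m)$, and $Y$ is quasismooth by hypothesis, so $\widehat Y$ is smooth away from the origin. Fix $i$ and work on the coordinate axis $\ell_i = \{z_j = 0 : j\neq i\}\subset\Aa^n$. Either $\widehat Y\cap\ell_i = \{0\}$, or $\widehat Y$ contains the whole axis $\ell_i$ and in particular a general point $p$ of $\ell_i$ with $z_i\neq0$, which maps to $P_i\in Y$. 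These two cases will produce conclusions (1) and (2) respectively.

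In the first case, $z_i^N$ vanishes on $\widehat Y$ for some $N$, i.e.\ $z_i^N\in\sqrt{(\phi^*(f_1),\dots,\phi^*(f_m))}$. The key step is to extract from this a single generator whose expansion in monomials contains a pure power $z_i^k$: writing each $\phi^*(f_j)$ as a polynomial, the "pure-$z_i$ part" $\phi^*(f_j)|_{z_\bullet = 0,\ \bullet\neq i}$ is either $0$ or a single monomial $c_j z_i^{k_j}$ (it is homogeneous in the grading and there is only one variable left); and if all of these vanished then $\ell_i\subseteq\widehat Y$, contradicting the case assumption. So some $\phi^*(f_j)$ genuinely contains a monomial $z_i^k$, which is conclusion (1). (One should note $d(z_i^k)=d(f_j)$, so such a monomial can only occur when $d(f_j)$ is a multiple of $d(z_i)$ — this is the degree compatibility that is implicit in the statement.)

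In the second case, $\ell_i\subseteq\widehat Y$, and we apply the Jacobian criterion at a general point $p\in\ell_i$ with $z_i\neq0$: since $Y$ is quasismooth, $\widehat Y$ is smooth at $p$, so the Jacobian matrix $\left(\partial \phi^*(f_j)/\partial z_r\right)$ has rank exactly $c = \codim \widehat Y = \codim_{\PP(a)} Y$ at $p$. Evaluated at $p$, every monomial of every $\phi^*(f_j)$ that is divisible by $z_r z_s$ with $r,s\neq i$ contributes $0$ to the Jacobian (it vanishes to order $\geq 2$ along $\ell_i$); and every monomial divisible by $z_i^{m}$ alone contributes $0$ to $\partial/\partial z_r$ for $r\neq i$ and also $0$ to the $\widehat Y$-is-in-$\ell_i$ relations. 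Hence the only entries of the Jacobian that can be nonzero at $p$ are the $\partial/\partial z_r$ ($r\neq i$) coming from monomials of the shape $z_r z_i^{m}$. For the rank to reach $c$, we therefore need at least $c$ of the equations $\phi^*(f_j)$ to contain a monomial of the form $z_{\sigma(j)} z_i^{m_j}$ with $\sigma(j)\neq i$ and distinct values $\sigma(j)$ (distinctness because the corresponding columns of the Jacobian, indexed by $z_{\sigma(j)}$, must be linearly independent — two equations contributing only to the same column give a rank-deficient submatrix). Re-indexing, this is precisely conclusion (2), with $S$ the set of those $c$ indices $j$ and $\sigma$ any permutation extending $j\mapsto\sigma(j)$.

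The main obstacle is the bookkeeping in case (2): one must be careful that "the Jacobian at $p$ has rank $c$" really does force $c$ \emph{distinct} leading variables $z_{\sigma(j)}$, rather than $c$ equations that happen to involve overlapping sets of variables. The clean way to see this is to observe that at the general point $p=(0,\dots,0,t,0,\dots,0)$ the surviving Jacobian is, up to nonzero scalars $m_j t^{m_j-1}$ (and the coefficients of the relevant monomials), a submatrix of the "incidence matrix" recording which $z_{\sigma(j)}$ occurs in which $\phi^*(f_j)$; its rank equals $c$ only if it contains a $c\times c$ permutation-type submatrix, which is exactly the combinatorial statement in (2). A minor subtlety worth flagging is that one is free to choose the coordinates $z_1,\dots,z_n$ adapted to $\phi$ (as is done elsewhere in the paper, e.g.\ Example \ref{eg!25-continued}); with such a choice the monomials $z_i^k$ and $z_{\sigma(j)}z_i^{m_j}$ in the conclusion are genuinely present rather than hidden inside a coordinate change.
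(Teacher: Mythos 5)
Your proof is correct and follows essentially the same route as the paper: both arguments reduce to the Jacobian criterion for quasismoothness of $\widehat Y$ at a point lying over $P_i$, observe that at such a point the only surviving entries of $\Jac(\widehat Y)$ come from monomials of the shape $z_r z_i^m$, and then extract the set $S$ and permutation $\sigma$ from the existence of an invertible $c\times c$ submatrix $J_c$. You fill in two details the paper leaves implicit --- the homogeneity argument in case (1) showing that some $\phi^*(f_j)|_{\ell_i}$ is a nonzero pure monomial, and the explicit description of which Jacobian entries survive --- and these additions are welcome.

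One phrasing slip is worth flagging: the parenthetical ``distinctness because \ldots two equations contributing only to the same column give a rank-deficient submatrix'' is not the right reasoning, since two rows sharing a column could still span rank $2$ together with other columns. The clean justification (which you do gesture at in the final paragraph with the ``permutation-type submatrix'' remark, and which the paper also uses) is simply that $\det J_c\neq 0$, so in the Leibniz expansion $\det J_c=\sum_\pi \pm\prod_j (J_c)_{j,\pi(j)}$ at least one permutation term is nonzero; that term directly produces the distinct column indices $\sigma(j)$. You may also wish to note that $m_j>0$ is automatic because if some $\phi^*(f_j)$ contained a bare linear term $z_r$ ($r\ne i$), the generator $z_r$ would be redundant, contrary to the setup.
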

\begin{proof}
Clearly, the first condition implies that $P_i$ is not contained in $Y$. So we assume that $P_i$ is in $Y$. Let $J$ denote the Jacobian matrix $\Jac(Y)$ evaluated at $P_i$. If $Y$ is quasismooth, then $J$ contains a $c\times c$ submatrix $J_c$ of rank $c$. Since $P_i$ is a coordinate point, this implies condition (2) of the Lemma. Indeed, the row numbers of $J_c$ make up the subset $S$, and $\sigma$ is some suitable permutation whose restriction to $S$ maps rows of $J_c$ to linearly independent columns of $J_c$.
\end{proof}
\begin{eg} Consider \#308 $Y\subset\PP(1,5,6,6,7,8,9,10)$ with coordinates $p,q,r,s,t,u,v,w$ in $\pC_2$ format 
$\left(\begin{smallmatrix}7 & 9 & 11 \\ 7 & 6 & 8 \end{smallmatrix}\big\vert
\begin{smallmatrix}9 & 5 & 10 \\ 0 & 6 & 0 \end{smallmatrix}\big\vert
\begin{smallmatrix}3\end{smallmatrix}\right)$. The general form of $\phi$ is
\[\left(\begin{array}{ccc|ccc|}
t & v & P_{11} & Q_9 & q & w \\
R_7 & r & u & 1 & s & 1
\end{array}
\:\: p^3 \right).
\]
Here $Y=\phi^{-1}(X)$ is always singular at the coordinate point $P_s$, even though $\phi$ is generically an immersion, and $\phi^{-1}(\sing X)$ is empty. Indeed, for degree reasons, $s$ only appears as $\phi^*(A_2)$ and possibly in $P_{11}$, $Q_9$, $R_7$. A quick examination of the equations shows that the tangent cone to $\phi^{-1}(X)$ at $P_s$ must be singular.
\end{eg}

\subsubsection{Ad hoc reasons for failure}\label{sec!fail-misc}
We document the failures appearing in column (3) of Table \ref{table!failure}
\begin{description}
\item[$(\#360)$] $Y\subset\PP^7(1, 4, 5, 6, 7, 7, 8, 9)$ in 
$\pC_2\left(\begin{smallmatrix}6 & 8 & 10 \\ 7 & 6 & 7 \end{smallmatrix}\big\vert
\begin{smallmatrix}5 & 9 & 8 \\ 4 & 0 & 0 \end{smallmatrix}\big\vert
\begin{smallmatrix}2\end{smallmatrix}\right)$ format
is always singular at one point in $\PP(4,8)$.
\item[$(\#393)$] $Y\subset\PP^7(1, 4, 5, 5, 6, 7, 8, 9)$ in 
$\pC_2\left(\begin{smallmatrix}6 & 7 & 9 \\ 6 & 8 & 8 \end{smallmatrix}\big\vert
\begin{smallmatrix}5 & 10 & 7 \\ 3 & -4 & 1 \end{smallmatrix}\big\vert
\begin{smallmatrix}0\end{smallmatrix}\right)$ format (i.e.\ $\PP^2\times\PP^2$ subformat) has a singular curve supported on $\PP(5,5)$ and a singular point in $\PP(4,8)$.
\item[$(\#878)$] $Y\subset\PP^7(1, 3, 3, 4, 4, 5, 5, 6)$ in 
$\pG_2\left(\begin{smallmatrix}9 & 6 & 9 & 6 \\ 0 & 2 & 0 & 4 \end{smallmatrix}\big\vert
\begin{smallmatrix}4 & 4 & 5 & 5 \\ 3 & 3 & 0 & 0 \end{smallmatrix}\big\vert
\begin{smallmatrix}3 \\ 3 \end{smallmatrix}\right)$ format
is singular along a curve in $\PP(3,3,6)$.
\end{description}

%\StephenComment{There are some $\pC_2^{(5)}$ cases to add in as well.}

\subsubsection{False baskets and ice cream}\label{section!fake-baskets}
Let $Y\subset\PP(a_1,\dots,a_n)$ be a candidate Fano 3-fold from \cite{grdb} with basket of terminal quotient singularities $\sB$. Suppose $Y'$ is a quasismooth 3-fold in weighted projective space with Hilbert series matching $Y$. We say that $Y'$ has a false basket if the quotient singularities of $Y'$ are not terminal. Such $Y'$ are discarded.

\begin{eg}\label{example!false-basket} $\#569$, $Y\subset\PP(1,3,4,5,5,6,7,9)$ of Fano index $q=1$ in 
$\pG_2\left(\begin{smallmatrix}15 & 5 & 10 & 9 \\ 0 & 5 & 0 & 6 \end{smallmatrix}\big\vert
\begin{smallmatrix}7 & 3 & 7 & 8 \\ -1 & 6 & -2 & 0 \end{smallmatrix}\big\vert
\begin{smallmatrix}0 \\ 3 \end{smallmatrix}\right)$ format has $\frac15(1,1,4)$ and $\frac15(3,4,4)$ singularities instead of a single $\frac15(1,2,3)$ singularity.
\end{eg}
\begin{eg}\label{example!false-basket-2} $\#2410$, $Y\subset\PP(1,2,2,3,4,5,5,6)$ in 
$\pG_2\left(\begin{smallmatrix}6 & 5 & 9 & 6 \\ 0 & 0 & 0 & 4 \end{smallmatrix}\big\vert
\begin{smallmatrix}2 & 3 & 6 & 4 \\ 5 & 5 & -3 & 0 \end{smallmatrix}\big\vert
\begin{smallmatrix}3 \\ 4 \end{smallmatrix}\right)$ format (i.e.\ $\pA_2+\CI^{(1)}$ subformat) has a curve of index 2 singularities. This is the only quasismooth Fano 3-fold with nonisolated singularities that we find.
\end{eg}

\begin{rmk}
There is a misprint in \cite[Table 1]{bkq}: \#577 has a working construction in $\PP^2\times\PP^2$ format, while \#645 is quasismooth but not terminal.
\end{rmk}

Let $\sB$ denote either the basket of a candidate Fano 3-fold $Y$, or the set of isolated singularities on a quasismooth Fano 3-fold $Y'$. Define the \emph{basket vector} of $\sB$ to be $v(\sB)=(v_2,\dots,v_{a_n})$ where $v_r$ is the number of quotient singularities in $\sB$ with index divisible by $r$, for $2\le r\le a_n$.

\begin{eg} According to \cite{grdb}, $\#569$ has basket vector $v(\sB)=(0,3,0,1,0,0,0,1)$, while the construction of Example \ref{example!false-basket} has basket vector $v(\sB')=(0,3,0,2,0,0,0,1)$.
\end{eg}

\begin{lem} Suppose $Y$ is a candidate Fano 3-fold from \cite{grdb} in codimension 4, 5 or 6 and $Y'$ is a quasismooth 3-fold with isolated singularities such that $P_Y(t)=P_{Y'}(t)$. Then $\sB(Y)=\sB(Y')$ if and only if $v(\sB(Y))=v(\sB(Y'))$.
\end{lem}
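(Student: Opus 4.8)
The plan is to prove the two implications separately. The ``only if'' direction is immediate, since equal baskets obviously have equal basket vectors, so all the content is in the converse, and the two ingredients I will exploit are the equality of basket vectors and the equality of Hilbert series.

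First I would extract the multiset of singularity \emph{indices} from the basket vector. Writing $N_r$ for the number of singularities in $\sB$ of index exactly $r$, one has $v_r=\sum_{r\mid d}N_d$, so M\"obius inversion over the divisibility poset gives $N_r=\sum_{r\mid d}\mu(d/r)\,v_d$; hence $v(\sB(Y))=v(\sB(Y'))$ forces $\sB(Y)$ and $\sB(Y')$ to have exactly the same multiset of indices. Note that since $Y'$ is quasismooth in $\PP(a_1,\dots,a_n)$ every occurring index divides some $a_i$, so in particular all indices are $\le a_n$ and everything in sight is finite.

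Next I would bring in orbifold Riemann--Roch in its ``ice cream'' form (Reid), which expresses the Hilbert series of a quasismooth $3$-fold with isolated cyclic quotient singularities as $P_Y(t)=I_Y(t)+\sum_Q c_Q(t)$, where the ``smooth part'' $I_Y$ is built from numerical invariants ($q$, $h^0(A)$, $A^3$, and so on) that are themselves determined by $P_Y$, and the contribution $c_Q$ of a local type $Q=\tfrac1r(b_1,b_2,b_3)$ is an explicit rational function whose periodic part depends only on the residue class modulo $r$. Since $P_Y=P_{Y'}$, this yields $\sum_{Q\in\sB(Y)}c_Q=\sum_{Q'\in\sB(Y')}c_{Q'}$. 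I would then decouple the contributions index by index, descending from the largest index $r^*$ (common to both baskets by the first step): the Fourier component of $\sum_Q c_Q$ at the primitive $r^*$-th roots of unity only sees the index-$r^*$ singularities, because no occurring index is a proper multiple of $r^*$, and knowing the number $N_{r^*}$ of such singularities together with their total contribution pins down their multiset of local types \emph{provided} the functions $c_Q$ attached to distinct index-$r^*$ types are linearly independent. Granting that, the index-$r^*$ parts of $\sB(Y)$ and $\sB(Y')$ agree; subtracting them from both baskets and from $P_Y=P_{Y'}$ and inducting on the number of distinct indices gives $\sB(Y)=\sB(Y')$.

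The hard part is the linear-independence input in the last step. It is genuinely false once $r$ is large (there are on the order of $r^2$ isolated cyclic quotient types but only on the order of $r$ independent $c_Q$), which is precisely why the hypotheses matter: restricting to $Y$ of codimension $4$, $5$ or $6$ bounds the indices by the (small) ambient weights and leaves only finitely many Hilbert series and candidate baskets, so the claim reduces to checking, for each candidate in this range, that among the finitely many baskets of isolated quotient singularities with the same ice-cream contribution the basket-vector map is injective --- a finite verification carried out on the classification \cite{bigtables}. So in the write-up I would present the M\"obius/RR reduction conceptually and then invoke this finite check for the independence step.
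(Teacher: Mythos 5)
Your proposal is correct and arrives at the same ultimate step as the paper --- a finite computer verification --- but dresses it in more conceptual machinery than the paper actually uses. The paper's argument is very direct: by the ice cream decomposition of \cite{BRZ}, $P_Y = P_{Y'}$ forces the orbifold contributions $\sum_{Q\in\sB}P_{\text{orb}}(Q,k_Y)$ to agree; one then \emph{computes}, for each candidate $Y$ in the range of interest, the orbifold contributions of \emph{all} baskets $\sB'$ with $v(\sB')=v(\sB(Y))$ and checks that only permutations of $\sB(Y)$ match. There is no M\"obius inversion and no root-of-unity decoupling in the paper; the basket vector is used only as the finite search space for the brute-force comparison.

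Your M\"obius-inversion observation (the basket vector determines the multiset of indices) and your idea to decouple contributions index-by-index by evaluating at primitive $r^*$-th roots of unity are both sound and would give a genuine conceptual reduction, organising the finite check into smaller pieces. What you correctly flag --- and this is the key point --- is that the conceptual route cannot be pushed to a clean end: the contributions $c_Q$ of distinct index-$r$ local types are not linearly independent once $r$ is moderately large (order $r^2$ isolated quotient types vs.~order $r$ independent rational functions), so the ``pins down the multiset'' step must be replaced by an explicit enumeration in the finite range supplied by the hypotheses. That is exactly what the paper does, just without the intermediate scaffolding. One small inaccuracy: the finite check should be run against the GRDB candidates in codimension $4$--$6$, not against \cite{bigtables} (which is the authors' output, not the input of candidates); this is harmless but worth fixing if you write it up.
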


\begin{proof}
The only if part is obvious. For the if part, we use ice cream. Define $k_Y=-q(Y)<0$ so that $\omega_Y=\sO_Y(k_Y)$. By \cite{BRZ}, the Hilbert series of $Y$ can be expressed as
\[P_Y(t)=P_I(t)+\sum_{Q\in\sB} P_{\text{orb}}(Q,k_Y)(t)\]
where $P_I(t)$ is uniquely determined by $k_Y$ and the first $\left\lfloor \frac{k_Y+2}2\right\rfloor$ terms of $P_Y(t)$.
In particular, the orbifold contributions to $P_Y(t)$ and $P_{Y'}(t)$ are equal.

Using a computer, we calculate the orbifold contribution to the Hilbert series for all baskets $\sB'$ with basket vector $v(\sB')=v(\sB(Y))$. Since $Y'$ is quasismooth with isolated singularities, one of these baskets must be the set of singularities of $Y'$. We find that the only possibilities for $\sB'$ whose orbifold contribution matches that of $Y$, are permutations of $\sB$, so the lemma is proven.
\end{proof}

We discard any constructions $Y'$ whose basket vector does not match that of the candidate $Y$, or which has nonisolated singularities. The basket vector is quite easy for the computer to determine. According to the above Lemma, the remainder are terminal quasismooth Fano 3-folds.

%\begin{figure}[p]
%	\thisfloatpagestyle{empty}
%    \vspace*{-2cm}
%    \makebox[\linewidth]{
%        \includegraphics{cheatsheet/cheatsheet.pdf}
%    }
%    \label{CheatSheet}
%\end{figure}

\noindent Stephen Coughlan \quad \texttt{stephen.coughlan@uni-bayreuth.de}

\noindent Mathematisches Institut, Lehrstuhl Mathematik VIII, Universit\"atsstrasse 30, 95447 Bayreuth, Germany

\noindent Tom Ducat \quad \texttt{t.ducat19@imperial.ac.uk}

\noindent (Previous) School of Mathematics, University of Bristol, Bristol, BS8 1TW, UK, 
and the Heilbronn Institute for Mathematical Research, Bristol, UK

\noindent (Current) Department of Mathematics, Imperial College, South Kensington Campus, 
Huxley Building, 180 Queen's Gate, London, SW7 2AZ, UK.

\end{document}